\newcommand{\arxiv}[1]{\href{http://arxiv.org/abs/#1}{\texttt{arXiv:#1}}}
\def\nfrac#1#2{{\textstyle\frac{#1}{#2}}}
\def\dfrac#1#2{\lower0.15ex\hbox{\large$\frac{#1}{#2}$}}
\newtheorem{theorem}{Theorem}[section]
\newtheorem{lemma}[theorem]{Lemma}
\newtheorem{remark}[theorem]{Remark}
\newtheorem{corollary}[theorem]{Corollary}
\numberwithin{equation}{section}
\def\LB{{\underline m}}
\def\UB{{\overline m}}
\def\imax{i_1}
\def\algA{{\tt FactorEasy}}
\def\algB{{\tt FactorUniform}}
\def\algC{{\tt FactorApprox}}
\def\accept{{\mathcal A}_0}
\def\ex{{\mathbb E}}
\def\pr{{\mathbb P}}
\def\strata{{\mathcal S}}
\def\Bi{\text{B1}+}
\def\Bii{\text{B2}+}
\def\Bvi{\text{B2}-}
\def\Bvii{\text{B1}-}
\def\textBvi{B2$-$}
\def\textBvii{B1$-$}
\def\Ci{\text{C}+}
\def\Cii{\text{C}-}
\def\Bipm{\text{B1}\pm}
\def\Biipm{\text{B2}\pm}
\def\Cpm{\text{C}\pm}
\def\textBipm{B1$\pm$}
\def\textBiipm{B2$\pm$}
\def\textCpm{C$\pm$}
\def\IVa{\text{IIb}+}
\def\IVb{\text{IIa}+}
\def\IVc{\text{IIc}+}
\def\IVd{\text{IIb}-}
\def\IVe{\text{IIa}-}
\def\IVf{\text{IIc}-}
\def\Va{\text{III}+}
\def\Vb{\text{III}-}
\def\textVb{\text{III}$-$}
\def\IIapm{\text{IIa}\pm}
\def\IIbpm{\text{IIb}\pm}
\def\IIcpm{\text{IIc}\pm}
\def\IIIpm{\text{III}\pm}
\def\textIIapm{IIa$\pm$}
\def\textIIbpm{IIb$\pm$}
\def\textIIcpm{IIc$\pm$}
\def\textIIIpm{III$\pm$}
\def\Prob{\operatorname{Prob}}
\def\nfrac#1#2{{\textstyle\frac{#1}{#2}}}
\def\dfrac#1#2{\lower0.15ex\hbox{\large$\frac{#1}{#2}$}}
\title{Uniform generation of spanning regular subgraphs\\ of a dense graph}
\author{Pu Gao\thanks{Supported by the Australian Research Council Discovery Project DP160100835 and NSERC.}\\
\small Department of Combinatorics and Optimization\\[-0.8ex]
\small University of Waterloo\\[-0.8ex]
\small ON, N2L 3G1, Canada\\
\small \texttt{pu.gao@uwaterloo.ca}
\and 
Catherine Greenhill\thanks{Supported by the Australian Research Council Discovery Project DP140101519.}\\
\small School of Mathematics and Statistics\\[-0.8ex]
\small UNSW Sydney\\[-0.8ex]
\small NSW 2052, Australia\\[-0.3ex]
\small \texttt{c.greenhill@unsw.edu.au}
}
\begin{document}
\maketitle

\begin{abstract}
Let $H_n$ be a graph on $n$ vertices and let $\overline{H_n}$
denote the complement of $H_n$.  Suppose that $\Delta = \Delta(n)$ is the
maximum degree of $\overline{H_n}$.
We analyse three algorithms for sampling $d$-regular subgraphs 
($d$-factors) of
$H_n$.  This is equivalent to uniformly sampling
$d$-regular graphs which avoid a set $E(\overline{H_n})$ of forbidden edges.
Here $d=d(n)$ is a positive integer which may depend on $n$.

Two of these algorithms produce a uniformly random
$d$-factor of $H_n$ in expected runtime which is linear
in $n$ and low-degree polynomial in $d$ and $\Delta$.
The first algorithm applies when $(d+\Delta)d\Delta = o(n)$.
This improves on an earlier algorithm by the first author,
which required constant $d$ and at most a linear number of edges in
$\overline{H_n}$.
The second algorithm applies when $H_n$ is regular
and $d^2+\Delta^2 = o(n)$, adapting an approach developed by the first 
author together with Wormald.  The third algorithm is a simplification
of the second, and produces an approximately uniform $d$-factor
of $H_n$ in time $O(dn)$. 
Here the output distribution differs from uniform by $o(1)$ in
total variation distance, provided that $d^2+\Delta^2 = o(n)$.
\end{abstract}

\section{Introduction}

Enumeration and uniform generation of graphs of given degrees has been
an active research area for four decades, with many applications.
Research on graph enumeration started in 1978 when Bender and Canfield~\cite{BC} 
obtained the asymptotic number of graphs with bounded degrees. The constraint of bounded degrees was slightly relaxed by Bollob\'{a}s~\cite{B} to the order of $\sqrt{\log n}$. A further relaxation on the degree constraints was obtained by McKay~\cite{M} to $d=o(n^{1/3})$, and by McKay and Wormald~\cite{MW2} to $d=o(\sqrt{n})$, for the $d$-regular graph case. 
On the other hand, asymptotic enumeration for dense $d$-regular graphs 
was performed in 1990 by McKay and Wormald~\cite{MW3} for $d, n-d\ge n/\log n$,
leaving a gap in which no result was known: namely, when $\min\{d, n-d\}$ lies
between $n^{1/2}$ and $n/\log n$. This gap was filled recently
by Liebenau and Wormald~\cite{LW16}. 
It is not known whether the problem of counting
all graphs with a given degree sequence is \#P-complete. Erd{\H o}s et al.~\cite{EKMS}
proved that the problem is self-reducible, in the bipartite setting,
and hence approximately counting such graphs can be reduced to sampling.

Uniform generation of graphs with given degrees has an equally long history, 
and is closely related to asymptotic enumeration. Tinhofer~\cite{Tinhofer} 
is among the first who investigated algorithmic heuristics, and realised 
that efficient uniform generation, or approximately uniform generation, 
is difficult. 
Graph enumeration arguments can sometimes be adapted to provide algorithms 
for uniform generation of graphs with given degrees. The proofs in~\cite{BC,B} 
immediately yield a simple rejection algorithm, which uniformly generates 
graphs of degrees at most $O(\sqrt{\log n})$ 
in expected polynomial time
(polynomial as a function of $n$). 
The switching arguments in~\cite{M,MW2} were adapted to give a polynomial-time 
(in expectation) algorithm~\cite{MW} for uniformly 
generating random $d$-regular graphs when $d=O(n^{1/3})$. However, overcoming 
the $n^{1/3}$ barrier was extremely challenging, with no progress for more 
than two decades. A major breakthrough was obtained by Wormald and the first author~\cite{GWSIAM}, 
by modifying the McKay-Wormald algorithm~\cite{MW} to a more flexible form, which allows the use and classification of different 
types and classes of switchings. This new technique greatly extended the 
range
of degree sequences for which uniform generation is possible. 
The first application~\cite{GWSIAM} 
of the new technique gave an algorithm for
uniform generation of $d$-regular graphs with expected runtime $O(nd^3)$
when $d=o(\sqrt{n})$. The second application~\cite{GW2SODA} gave an
algorithm for uniform generation of graphs with power-law degree sequences
with exponent slightly below 3, with expected runtime
$O(n^{2.107})$ with high probability. 

Although uniform generation of random regular graphs has been challenging and 
remains open for $d$ of order at least $\sqrt{n}$, various approximate samplers have been developed and proven to run in polynomial time. Jerrum and Sinclair gave a MCMC-based scheme~\cite{JS} which 
gives polynomial-time sampling of graphs with a given degree sequence, so
long as the degree sequence satisfies a condition called P-\emph{stability}.
No explicit bound on the runtime was given.
Since all regular sequences are P-stable, the Jerrrum--Sinclair algorithm
generates $d$-regular graphs approximately uniformly in polynomial time. 
Kannan, Tetali and Vempala used another Markov chain to sample random regular bipartite graphs. They proved that the mixing time is polynomial without giving an explicit bound. Later this chain was extended by Cooper, Dyer and Greenhill~\cite{CDG,CDG-corrigendum} to generate random regular graphs, with mixing time bounded by $d^{24}n^9\log n$. There are asymptotically approximate samplers~\cite{BKS,GWSIAM,KV,SW,Zhao} 
which generate $d$-regular graphs very fast, typically with linear or up to quadratic runtime, and with an output of total variation distance $o(1)$ from the uniform.

Jerrum and Sinclair's algorithm~\cite{JS} built on their earlier work on a Markov chain
for sampling perfect matchings (1-factors) in a given graph~\cite{JS2}. 
A natural generalisation of this problem is that of sampling random $d$-factors in a given
graph, which we call the \emph{host graph} and denote by $H_n$, where $n$ is the number of vertices.  

%
We call ${\overline{H_n}}$, the complement of  $H_n$, the {\em forbidden graph}. 
Let $\Delta$ denote the maximum degree of $\overline{H_n}$.

The computational complexity of counting $d$-factors in a given host graph is not
known in general, though the special case $d=1$ (counting perfect matchings in a given graph)
is \#P-complete. There are asymptotic enumeration results for graphs with given degrees
and a specified set of forbidden edges, see for example~\cite{M,McKay}.
However, there has not been much research in the direction of uniform generation of $d$-factors.
The first author gave a rejection algorithm in~\cite{G14} which has an
expected linear runtime when $d=O(1)$ and ${\overline{H_n}}$ contains at most 
a linear number of edges. Here $H_n$ is not 
necessarily regular, and the maximum degree of ${\overline{H_n}}$ can be linear. 
We are not aware of any other algorithms which have explicit polynomial bounds 
on the runtime and vanishing bounds on the approximation error.  
For approximate sampling, the Jerrum--Sinclair algorithm~\cite[Section~4]{JS} generates an
approximately uniform $d$-factor of $H_n$ in polynomial time, as long as  
$d+\Delta\leq n/2 + 1$.
It seems unlikely that the approach of Cooper et al~\cite{CDG} can
be adapted to the setting of sampling $d$-factors, due to the complexity of the
analysis. Erd{\H o}s et al.~\cite{EKMS} analysed a Markov chain
algorithm which uniformly generates bipartite graphs with a given half-regular 
degree sequence,
avoiding a set of edges which is the union of a 1-factor and a star.
Here ``half-regular'' means that the degrees on one side of the bipartition
are all the same, with the possible exception of the centre of the star.

The aim of this paper is to develop efficient algorithms that 
sample $d$-factors of $H_n$ uniformly or 
approximately uniformly.  We will describe and
analyse three different algorithms, which we call \algA, \algB\ and \algC.
Our main focus is \algB, which is an algorithm for uniformly generating 
$d$-factors of an $(n-1-\Delta)$-regular host graph $H_n$, when $d$ and
$\Delta$ are not too large.  For smaller $d$ and $\Delta$,
the simpler algorithm \algA\ is more efficient, and does not
require $H_n$ to be regular (here $\Delta$ is the maximum degree
of $\overline{H_n}$). Finally,
\algC\ is a linear-time algorithm for generating $d$-factors of $H_n$
asymptotically approximately uniformly, under the same conditions
as \algB.

Our results are stated formally below.
All asymptotics are as the number of vertices $n$ tends to infinity,
along even integers if $d$ is odd. 
Throughout, $d=d(n)$ and $\Delta=\Delta(n)$ are
positive integers which may depend on $n$.


\begin{restatable}{theorem}{thmA}\label{thm:A} 
Let $H_n$ be a graph on $n$ vertices such that
the maximum degree of $\overline{H_n}$ is $\Delta$. 
The algorithm \algA\ uniformly generates a $d$-factor of $H_n$. 
If $(d+\Delta)d\Delta=o(n)$ then \algA\ runs in time 
$O((d+\Delta)^3n)$ in expectation.  
\end{restatable}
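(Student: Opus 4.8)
The plan is to realise \algA\ as a switching-based sampler on the configuration (pairing) model and to establish uniformity and the runtime bound separately. Place $d$ points at each of the $n$ vertices and let $P$ be a uniformly random perfect matching of the $dn$ points; each pair of $P$ projects to an edge, so $P$ encodes a $d$-regular multigraph on the vertex set. The pivotal bookkeeping fact is that every simple $d$-factor of $H_n$ — equivalently, every simple $d$-regular graph avoiding the forbidden set $E(\overline{H_n})$ — is the image of exactly $(d!)^n$ pairings, since the $d$ points at each vertex may be permuted freely. Consequently, any procedure returning a uniformly random pairing conditioned on its multigraph being \emph{valid} (no loop, no repeated edge, and no edge of $\overline{H_n}$) outputs a uniformly random $d$-factor. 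The task of \algA\ is to produce such a valid pairing without incurring the exponentially small acceptance probability of naive rejection, which fails here because the expected number of defects grows with $d$ and $\Delta$.

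First I would set up the switchings. Starting from a random $P$, the algorithm repeatedly applies local operations that delete a single defect — a loop, a repeated edge, or a forbidden edge — by re-matching a bounded number of points, until the multigraph is valid; this extends the loop and multi-edge switchings of~\cite{GWSIAM} by one further switching type tailored to forbidden edges. To recover exact uniformity I would attach to each switching an f-rejection and a b-rejection step, as in~\cite{GWSIAM}: the forward step is thinned so that every defect of the chosen class is equally likely to be removed, and the backward step is thinned to compensate for the number of distinct switchings that could have produced the current pairing. Correctness is then a stratification argument: writing the state space as strata indexed by the defect profile, I would verify that for each adjacent pair of strata the thinned forward and backward switching counts satisfy the balance identity, so that conditional uniformity is preserved at each level and, descending to the defect-free stratum, the accepted pairing is uniform over all valid pairings. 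Because $H_n$ need not be regular, the forbidden-edge switching must be defined so that this identity holds under only a maximum-degree bound on $\overline{H_n}$.

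For the runtime I would assemble three quantitative ingredients. A first-moment computation in the pairing model gives expected defect counts $O(d)$ loops, $O(d^2)$ multiple edges and $O(d\Delta)$ forbidden edges (and since $\Delta\ge 1$, the hypothesis $(d+\Delta)d\Delta=o(n)$ forces $d,\Delta=o(\sqrt n)$, so multiplicities beyond two and coincidences among defects contribute lower-order terms); hence $O\bigl(d(d+\Delta)\bigr)$ switchings are performed per attempt. Generating the initial pairing costs $O(dn)$ and dominates a single attempt, the switching work being of lower order in $n$. The hypothesis enters precisely in bounding the rejection probabilities: the b-rejection for a forbidden-edge switching compares its backward count with a maximum whose relative error is of order $(d+\Delta)d\Delta/n$, and making this $o(1)$ keeps each per-step rejection probability near its nominal value, so that the expected number of restarts before an attempt survives every rejection is $O\bigl((d+\Delta)^2\bigr)$. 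Combining, the expected runtime is $O(dn)\cdot O\bigl((d+\Delta)^2\bigr)=O\bigl((d+\Delta)^3 n\bigr)$, which in the forbidden-edge-free limit $\Delta=0$ recovers the $O(d^3 n)$ bound of~\cite{GWSIAM}.

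The step I expect to be the main obstacle is the uniformity analysis in the presence of three interacting defect types and a host graph constrained only through the maximum degree $\Delta$ of $\overline{H_n}$. Since different vertices may have different forbidden-degrees, the number of forward switchings that remove a given forbidden edge, and the number of backward switchings that could recreate it, vary across the graph; arranging the f- and b-rejections so that the balance identity holds \emph{exactly} while the rejection probabilities stay bounded away from $0$ is the delicate point, and is exactly what the product $(d+\Delta)d\Delta$ measures. A secondary difficulty is that a switching removing one defect may inadvertently create another (for instance, a forbidden-edge switching producing a multiple edge); I would handle this either by disallowing such moves and absorbing them into the b-rejection bound, or by ordering the defect-removal phases, and either route relies on the simultaneous $o(n)$ control of $d$, $\Delta$ and their sum supplied by the hypothesis.
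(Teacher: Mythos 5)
Your high-level architecture (stratify the state space by defect count, descend via switchings equipped with f- and b-rejections calibrated so that uniformity is preserved stratum by stratum) is the same as the paper's, but there are two genuine gaps, one fatal to the runtime claim. First, your runtime accounting is both unjustified and internally inconsistent. You assert that each attempt costs $O(dn)$, ``the switching work being of lower order in $n$'', and that the expected number of restarts is $O((d+\Delta)^2)$, multiplying these to hit $O((d+\Delta)^3n)$. But the b-rejection you rely on for exact uniformity requires computing, after every switching, the \emph{exact} number of inverse switchings producing the current state; this is not lower-order bookkeeping. In the paper this computation is the dominant cost and is precisely where $O((d+\Delta)^3n)$ comes from: Lemma~\ref{algA-runtime} builds a data structure recording counts of 2- and 3-paths of each colour pattern in the graph $\widetilde{G}$ of maximum degree $d+\Delta$, costing $O((d+\Delta)^3n)$ for the initial graph and $O((d+\Delta)^2)$ per update, and evaluates $b(G)$ by inclusion--exclusion from these counts. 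Meanwhile your restart bound has no derivation and contradicts your own rejection analysis: under $(d+\Delta)d\Delta=o(n)$ the correct per-step rejection probability is $O((d+\Delta)/n)$ (Lemma~\ref{lem:fb}), so over the $O(d\Delta)$ steps the total rejection probability is $O((d+\Delta)d\Delta/n)=o(1)$ and the expected number of restarts is $O(1)$, not $\Theta((d+\Delta)^2)$. If instead each step really rejected with probability of order $(d+\Delta)d\Delta/n$, as you write, then over the $\Theta(d(d+\Delta))$ steps of your pairing-model version the cumulative rejection probability would be of order $d^2\Delta(d+\Delta)^2/n$, which the hypothesis does not make $o(1)$ (take $d=\Delta=n^{1/3-\epsilon}$). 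The product $O(dn)\cdot O((d+\Delta)^2)$ appears reverse-engineered; note that with the corrected restart count your accounting would give $O(dn)$ total, which should have signalled that a dominant cost (the exact computation of the rejection probabilities) is missing.

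Second, your pairing-model route with three interacting defect types (loops, repeated edges, forbidden edges) is genuinely harder than what the paper does, and you defer exactly the difficult parts. The paper calls the Gao--Wormald algorithm REG as a black box to obtain an \emph{exactly uniform simple} $d$-regular graph, so the only defects ever present are red edges: the strata are indexed by a single integer $i$, every 3-edge switching removes exactly one red edge, and the validity conditions on the 6-tuple forbid creating loops or multiple edges, so the interaction problem you flag as your ``main obstacle'' never arises. Your worry about the irregularity of $\overline{H_n}$ is also resolved more simply than you anticipate: no per-vertex balancing of the rejection scheme is needed, because the f-rejection is implicit (an invalid 6-tuple triggers rejection, with $\UB(i)$ exactly the number of raw 6-tuples) and the b-rejection uses a single stratum-wide lower bound $\LB(i-1)$ whose relative error $O((d+\Delta)/n)$ is uniform over $\strata_i$, again by Lemma~\ref{lem:fb}. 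Extending the pairing-model analysis of REG by a third defect type could presumably be made to work, but it would require redoing the multi-dimensional balance identities of that paper rather than citing them, whereas the paper's factorisation through REG reduces Theorem~\ref{thm:A} to a one-parameter family of switchings with elementary counting.
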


\begin{restatable}{theorem}{thmB}\label{thm:B} 
Let $H_n$ be an $(n-\Delta-1)$-regular graph
on $n$ vertices.
The algorithm \algB\ uniformly generates a $d$-factor of $H_n$. 
If $d^2+\Delta^2=o(n)$ then the time complexity of \algB\  is
$O((d+\Delta)^4n+d^3n\log n)$ a.a.s., 
and is $O(M)$ in expectation, where
\[
M=O\Big((d+\Delta)^4(n+\Delta^3)+(d+\Delta)^8d^2\Delta^2/n +(d+\Delta)^{10} d^2\Delta^3 /n^2\Big).
\]
\end{restatable}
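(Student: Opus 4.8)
The plan is to adapt the incremental-switching scheme of Gao and Wormald~\cite{GWSIAM} to the present setting, in which the defects to be eliminated from a random multigraph come in three flavours: loops, repeated edges, and \emph{forbidden} edges, meaning pairs of vertices that are adjacent in $\overline{H_n}$. First I would set up a configuration (pairing) model, placing $d$ points in a bucket at each vertex and generating a uniformly random perfect matching of the $dn$ points, which can be carried out in time $O(dn)$. Projecting a pairing to a multigraph yields a $d$-regular multigraph on the vertex set of $H_n$, and the goal is to transform it into a uniformly random \emph{simple} $d$-factor of $H_n$, that is, a simple $d$-regular graph with no loop, no repeated edge and no forbidden edge. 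Because $\overline{H_n}$ is $\Delta$-regular and $d+\Delta$ is small compared with $n$, the expected numbers of defects are all small: $O(d)$ loops, $O(d^2)$ double edges (higher multiplicities contributing only lower-order terms), and $O(d\Delta)$ forbidden edges.

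Next I would define one family of switchings for each defect type, subdivided into classes according to the local structure that must be tracked --- for instance whether the edges participating in a switching are themselves repeated or forbidden --- and verify that each switching is reversible. The regularity of $H_n$ is exploited here: since $\overline{H_n}$ is $\Delta$-regular, the number of available switchings of each class depends, to leading order, only on the global parameters $d$, $\Delta$ and $n$ rather than on the individual vertex, and it is this uniformity that makes tight estimates, and hence an exactly uniform output, attainable. Grouping multigraphs into strata indexed by their defect counts, a switching carries one stratum to an adjacent stratum with one fewer defect. Correctness of \algB\ then follows from the general framework of~\cite{GWSIAM}: at each step one selects a random switching of the appropriate class and applies two rejection tests, whose acceptance probabilities are assembled from a lower bound $\underline{m}$ and an upper bound $\overline{m}$ on the number of switchings of each class available at a multigraph, calibrated so that uniformity on the current stratum is propagated to the stratum below, and ultimately to the bottom stratum of simple $d$-factors.

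For the runtime I would control three contributions: the number of switching steps, which equals the number of defects removed and is a.a.s.\ $O((d+\Delta)^2)$ by concentration of the defect counts; the per-step cost of sampling a switching, testing forbidden-edge membership and performing the rejections, supported by appropriate data structures; and the expected number of restarts triggered by the global rejection that corrects the distribution. Combining these with the $O(dn)$ cost of generating each initial pairing yields the a.a.s.\ bound $O((d+\Delta)^4n+d^3n\log n)$, the logarithmic factor arising from the data-structure operations. The bound $O(M)$ in expectation additionally requires controlling the contribution of atypical pairings, for which the defect counts --- and therefore both the number of steps and the probability of restarting --- can greatly exceed their typical values; the terms of $M$ carrying factors $\Delta^3$, $1/n$ and $1/n^2$ are precisely the tail corrections produced by such rare initial configurations.

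The main obstacle, as in~\cite{GWSIAM}, is the combinatorial estimation of the switching counts with sufficient precision. For every class one must produce bounds $\underline{m}$ and $\overline{m}$ that are \emph{valid}, so that the induced rejection probabilities never exceed $1$ throughout the range $d^2+\Delta^2=o(n)$, and simultaneously \emph{tight}, so that these probabilities stay bounded away from $0$ and the expected number of rejections and restarts is $O(1)$ rather than a growing factor in the runtime. The forbidden edges are what make this delicate: each switching must be prevented from creating a new forbidden edge, and the structure available at a vertex depends on its forbidden-degree, so the $\Delta$-dependence has to be carried accurately through every class --- and through the higher-order error terms that feed the tail analysis underlying the in-expectation bound. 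This bookkeeping is where the bulk of the technical work will lie.
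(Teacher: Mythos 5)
Your proposal fails at the step where you assert that ``a switching carries one stratum to an adjacent stratum with one fewer defect'' and that two rejection tests calibrated from bounds $\underline{m}$, $\overline{m}$ then propagate uniformity downward. For the forbidden-edge defects this scheme breaks down in the range $d^2+\Delta^2=o(n)$, and the reason is exactly what forces the paper's design: the backward count for defect-reducing switchings is \emph{not} concentrated over a stratum, so no valid lower bound $\underline{m}$ is simultaneously tight. The paper exhibits explicit worst cases (Remark~\ref{bad-example}, and the Class \textCpm\ discussion in Section~\ref{sec:B}): when $d=\Delta$ and the red edges form a red $d$-regular subgraph, certain graphs in $\strata_i$ admit \emph{zero} inverse switchings of the relevant class, so the b-rejection probability would equal $1$; more generally the variation in the backward counts forces a per-step rejection probability of order $\Delta/n$ over $\Theta(d\Delta)$ steps, giving overall failure probability $\Omega(d\Delta^2/n)$, which is not $o(1)$ under $d^2+\Delta^2=o(n)$. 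The paper's remedy --- which is the actual content of \algB\ and is absent from your proposal --- is to introduce switchings that keep the defect count the same or \emph{increase} it (Types \textIIapm, \textIIbpm, \textIIcpm, \textIIIpm, the ``boosters''), to classify all switchings into classes (A, \textBipm, \textBiipm, \textCpm), to add a pre-b-rejection equalising the creation frequency of octagons, and to solve a recursive system (\ref{qA})--(\ref{rec}) for type-selection probabilities $\rho_\tau(i)$ so that every graph in $\strata_i$ has the same expected visit count $\sigma_i$ (Lemmas~\ref{lem:solution}, \ref{lem:uniform}). Merely ``subdividing into classes'' within a one-defect-fewer scheme, as you describe, cannot deliver exact uniformity here. (Your choice to start from a pairing and remove loops and multiple edges yourself is a further, unnecessary complication: the paper instead calls REG to obtain an exactly uniform simple $d$-regular graph in expected time $O(d^3n)$, so the only defects are red edges, stratified by a single index.)

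Your runtime accounting also does not produce the stated bounds. The $d^3n\log n$ term is not a data-structure cost: it is the a.a.s.\ cost of the initial phase, namely $O(\log n)$ restarts of REG at $O(d^3n)$ each (via Corollary~\ref{cor3:imax}); the data structures, recording counts of small subgraphs on up to $5$ vertices and updated in $O((d+\Delta)^3)$ per step, account for the $(d+\Delta)^4n$ term. Likewise the terms of $M$ with factors $\Delta^3$, $1/n$, $1/n^2$ are not tail corrections from atypical initial configurations: they are the expected cost of computing the forward counts $f_\tau(G)$ for the rarely chosen booster types, obtained by multiplying the per-evaluation costs $O((d+\Delta)^4n)$, $O((d+\Delta)^8n)$, $O((d+\Delta)^{10}n)$ for Types \textIIapm, \textIIbpm, \textIIcpm\ by the selection probabilities $\rho_\tau^*=O(\Delta^2/n^2)$ or $O(d\Delta/n^2)$ from (\ref{eq:IIbprob})--(\ref{eq:IIcprob}) and by the $O(d\Delta)$ expected number of steps. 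Since your scheme contains no boosters, it has no mechanism that would generate these terms, and conversely it gives no argument for the bound $M$ as stated; the tail-of-defect-counts analysis you sketch addresses a different (and, after conditioning on at most $\imax$ red edges, already controlled) source of cost.
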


\noindent{\em Note added in proof:}  A new technique called {\em incremental relaxation} has recently been developed by Arman, Wormald and the first author~\cite{relaxation}. This technique significantly improves the run time of switching-based algorithms by incrementally performing rejections. It is very likely that the run time of \algA\ and \algB\ can be significantly improved by adapting the incremental relaxation scheme from~\cite{relaxation}.

\begin{restatable}{theorem}{thmC}\label{thm:C} 
Let $H_n$ be an $(n-\Delta-1)$-regular graph on $n$ vertices.
Assume that $d^2+\Delta^2=o(n)$. The algorithm \algC\ approximately generates a
 uniformly random $d$-factor of $H_n$ in time $O(dn)$ in expectation. 
The distribution of the output of \algC\ differs from uniform by $o(1)$ in total variation distance.   
\end{restatable}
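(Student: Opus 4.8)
The plan is to treat $\algC$ as $\algB$ with its rejection steps switched off, and to establish separately that (i) removing the rejections brings the expected runtime down to $O(dn)$, and (ii) the bias this introduces is $o(1)$ in total variation. Both algorithms run the same underlying experiment: first generate a uniformly random pairing (configuration) on $dn$ points, $d$ at each vertex, and then apply a sequence of reduction switchings that successively remove the defects --- loops, multiple edges, and edges lying in $\overline{H_n}$ --- moving the configuration down through the strata $\strata_i$ indexed by the number of remaining defects, until a defect-free configuration (equivalently, a simple $d$-factor of $H_n$) is reached. Where $\algB$ inserts f- and b-rejections, calibrated by the bounds $\LB$ and $\UB$ on the numbers of switchings of each type, to force the output to be exactly uniform, $\algC$ simply performs each chosen switching with no rejection.

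For the runtime, I would note that generating and storing the initial pairing already costs $\Theta(dn)$, so it suffices to show the switching phase is no more expensive in expectation. A first-moment computation in the configuration model gives an expected $O(d)$ loops, $O(d^2)$ multiple edges, and $O(d\Delta)$ forbidden edges; since $d^2+\Delta^2=o(n)$ forces $d,\Delta=o(\sqrt{n})$, each of these is $o(n)$. With suitable data structures each switching is selected and applied in time lower-order compared with $dn$, and --- unlike $\algB$ --- there are no bound computations to perform and no rejected runs to repeat. Summing over the $o(n)$ expected switchings, the switching phase contributes $o(dn)$, so the total expected runtime is $O(dn)$.

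The heart of the proof is the total variation bound, and here I would exploit the tight relationship with $\algB$. Let $q(G)$ denote the probability that a single run of the experiment (with rejections suppressed) terminates successfully at a simple $d$-factor $G$, so that $\pi(G)\propto q(G)$ for the output distribution $\pi$ of $\algC$, and let $r(G)\in[0,1]$ be the probability that all of $\algB$'s rejection tests would have been passed along the way. Because every simple $d$-factor is the image of exactly $(d!)^n$ pairings, and the rejections are designed so that $q(G)\,r(G)$ is constant in $G$ --- which is precisely the uniformity of $\algB$ proved in Theorem~\ref{thm:B} --- we have $q(G)=\mathrm{const}/r(G)$, and a direct computation yields $\pi(G)/U(G)=1/\bigl(r(G)\,\overline{r^{-1}}\bigr)$, where $U$ is uniform and $\overline{r^{-1}}$ is the average of $1/r$ over all $d$-factors. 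It therefore suffices to show that $r(G)=1-o(1)$ for all $G$ outside an exceptional set of probability $o(1)$: then $\pi(G)=(1+o(1))\,U(G)$ off that set, and summing $\abs{\pi(G)-U(G)}$ gives $\|\pi-U\|_{\mathrm{TV}}=o(1)$.

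The main obstacle is establishing $r(G)=1-o(1)$. This quantity is a product, over the switching steps on a trajectory to $G$, of per-step acceptance probabilities of the form $m_i(C)/\UB_i$ and their backward analogues, where $m_i(C)$ is the number of available reduction switchings at the visited configuration $C\in\strata_i$; so $r(G)=1-o(1)$ amounts to showing that the \emph{accumulated} rejection probability along a trajectory is $o(1)$. Each individual factor is close to $1$ provided $m_i(C)$ concentrates near $\UB_i$: the counts for loops and multiple edges are governed by $d$ as in the analysis of $\algB$, but the forbidden-edge switchings depend on $\overline{H_n}$ through $\Delta$, and one must also verify that reducing one kind of defect does not create an unexpected surplus of another. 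The delicate point is that there are up to $O(d^2+d\Delta)$ switching steps, so the per-step discrepancies must be shown to \emph{sum} to $o(1)$, not merely to be individually small; it is in bounding this accumulated discrepancy, rather than any single switching count, that the hypothesis $d^2+\Delta^2=o(n)$ is used most sharply.
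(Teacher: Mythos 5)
Your high-level strategy---view \algC\ as \algB\ with its safeguards disabled and bound the probability that the safeguards would ever have fired---is indeed the idea behind the paper's proof (Lemma~\ref{lem:algC-distribution}). But your execution has two concrete problems. First, you are analysing the wrong process. \algC\ does not generate a pairing on $dn$ points and remove loops and multiple edges: it calls REG* repeatedly until it obtains a \emph{simple} $d$-regular graph with at most $\imax=\frac23 d\Delta$ red edges (Corollary~\ref{cor3:imax} shows $O(1)$ attempts suffice), and the strata $\strata_i$ count only red (forbidden) edges, never loops or doubles. Consequently the switching phase has $O(d\Delta)$ steps, not $O(d^2+d\Delta)$, and your first-moment accounting of loops and double edges, and the claim that ``every simple $d$-factor is the image of exactly $(d!)^n$ pairings'', address an algorithm that is not \algC. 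The $O(dn)$ runtime conclusion happens to survive, but for the paper's reasons (REG* costs $O(dn)$; a uniformly random 8-tuple is valid with high probability, so each 4-edge switching costs $O(1)$; $O(d\Delta)=o(dn)$ steps), not yours.

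Second, and more seriously, the identity at the heart of your total-variation argument is false: $q(G)\,r(G)$ is \emph{not} constant in $G$. The exact uniformity of \algB\ is achieved jointly by the rejection scheme \emph{and} the booster switchings of Types \textIIapm, \textIIbpm, \textIIcpm, \textIIIpm\ (Tables~\ref{tab:TypeIV} and~\ref{tab:TypeV}), which \algC\ never performs. Restricting \algB\ to Type~I trajectories and multiplying by an acceptance probability $r(G)$ omits exactly the booster contributions to the visit frequency of $G$, and these contributions vary across $G$ by design: Remark~\ref{bad-example} and Section~\ref{ss:boostB1} exhibit graphs that admit \emph{no} inverse Type~I Class~\textBipm\ switchings at all, which is precisely why the boosters were introduced. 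So $q(G)=\mathrm{const}/r(G)$ fails, and with it your ratio computation $\pi(G)/U(G)=1/(r(G)\,\overline{r^{-1}})$. Moreover, the per-graph bound $r(G)=1-o(1)$ outside a small exceptional set is both stronger than needed and delicate to prove conditionally on the endpoint. The paper's fix is a coupling that sidesteps conditioning entirely: with probability $1-o(1)$, \algB\ suffers no t-, f-, pre-b- or b-rejection (Lemma~\ref{lem:rejections}) and never selects a type outside $\{\text{I},\IIIpm\}$ (by (\ref{cond2}) and (\ref{epsilon}), this has probability $O(\epsilon\,\imax)=o(1)$); since Type~\textIIIpm\ switchings alter no edges, on this event \algB's run coincides step-for-step with \algC's, and one additionally couples REG with REG* (they agree with probability $1-o(1)$, \cite[Section~10]{GWSIAM}) to handle the approximately uniform initial graph---a discrepancy your proposal never addresses. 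The total variation distance is then at most the coupling failure probability, which is $o(1)$.
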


\noindent {\em Remark:} For simplicity we considered regular spanning subgraphs in this paper, and the host graph is assumed regular for \algB\ and \algC. However, all of these algorithms are flexible and can be modified to cope with more general degree sequences for the spanning subgraph and for the host graph. For instance, the McKay-Wormald algorithm~\cite{MW} can uniformly generate a subgraph of $K_n$ with a given degree sequence where the maximum degree is not too large. We believe that \algA\ can be easily modified for general degree sequences, by calling~\cite{MW}, and then slighly modifying the analysis, with essentially the same switching. To cope with denser irregular subgraphs or sparser irregular host graphs, \algB\ and \algC\ can be modified accordingly, 
by possibly introducing new types of switchings. We do not pursue this here.

\smallskip

In Section~\ref{sec:frame}, we describe the common framework of 
\algA\ and \algB, and define some key parameters that will appear in these 
two algorithms and in the approximate sampling algorithm \algC. 
More detail on the structure of the paper can be found at the end of
Section~\ref{sec:A}.

\section{The framework for uniform generation} \label{sec:frame}

The new approach of Gao and Wormald~\cite{GWSIAM} 
gives a common framework for Algorithms \algA\ and \algB, 
which we will now describe. The Gao--Wormald scheme reduces to the McKay--Wormald 
algorithm~\cite{MW} by setting certain parameters to some trivial values. 
Our algorithm \algA\ is indeed an adaptation of the simpler McKay-Wormald algorithm, 
whereas  \algB\ uses the full power of~\cite{GWSIAM}
which allows it to cope with a larger range of $d$ and $\Delta$.

It is convenient to think of the host graph $H_n$ as being
defined by a 2-colouring of the complete graph $K_n$ with the
colours red and black, where edges of $\overline{H}$ are coloured red,
while edges of $H_n$ are coloured black.
Then our aim is to uniformly sample $d$-factors of $K_n$ which
contain no red (forbidden) edges.

Both algorithms \algA\ and \algB\ begin by generating a uniformly 
random $d$-regular graph $G$ on $\{ 1,2,\ldots, n\}$.
For the range of $d$ which we consider, this can be done using the
Gao--Wormald algorithm REG~\cite{GWSIAM}. 
Typically this initial graph $G$ will contain some 
red edges. Let $\strata_i$ denote the set of all $d$-regular graphs containing
precisely $i$ red edges. The sets $\strata_0,\strata_1,\ldots$ are called  {\em strata}. 
For some positive integer parameter $\imax$, which we must 
define for each 
algorithm, let
\[
\accept = \bigcup_{i=0}^{\imax} \, {\strata}_i.
\] 
If the initial graph $G$ does not belong to $\accept$ then the algorithm
will reject $G$ and restart.
Otherwise, the initial graph $G$ belongs to $\accept$ and so it does not contain
too many red edges.  Then the algorithm will perform a sequence of switching 
operations (which we must define), starting from $G$, until it reaches a 
$d$-regular graph with no red edges.  At each switching step there is a chance of a
\emph{rejection}: if a rejection occurs then the algorithm will restart.
This rejection scheme must also be defined, for each algorithm.

In \algA, only one type of switching (Type I) is used. Each such switching 
reduces the number of red edges by exactly one. As soon as \algA\ 
reaches a $d$-regular graph with no red edges, it outputs that graph,
provided that no rejection has occurred. 
When $(d+\Delta)d\Delta$ is of order $n$
or greater, the probability of a rejection occurring in \algA\ is very close to 1 and \algA\ becomes inefficient. 

\algB\ reduces the probability of rejection by permitting some switchings 
that are invalid in \algA, as well as introducing other types of switchings. 
Switchings which typically reduce the number of red edges by exactly one will 
still be the most frequently applied switchings, although in \algB\ we relax these 
switchings slightly so that certain operations which were forbidden
in \algA\ will be permitted in \algB. The other types of switchings 
do not necessarily reduce the number of red edges. 
Rather counterintuitively, some switchings will create more red edges. 
The new types of switchings are introduced for the same reason as the use
of the rejection scheme: to remedy the distortion of the distribution which
arises by merely applying Type I switchings. 

We will specify parameters 
$\rho_{\tau}(i)$, for $0\le i\le \imax$ and $\tau\in \Gamma$, where 
$\Gamma$ is the set of the types of switchings to be applied in \algB. 
In each step of \algB, ignoring rejections that may occur with a small 
probability, a switching type $\tau\in\Gamma$ is chosen with probability 
$\rho_{\tau}(i)$ if the current graph $G$ is in $\strata_i$. Then, 
given $\tau$, a random switching of type $\tau$ is performed. 
As mentioned before, the Type I switchings are the most common: in fact, 
we set $\rho_{\text{I}}(i)$ close to 1 for each $0\le i\le \imax$. 
If the current graph lies in $\strata_0$ then a Type I switching applied to
that graph will simply output the graph, but any other type of switching
will not.  Hence \algB\ does not always
immediately produce output as soon as it reaches a graph in $\strata_0$,
unlike \algA.

As a preparation, we compute the expected number of red edges in a 
random $d$-regular subgraph of $K_n$.

\begin{lemma} \label{lem:expectation}
Let $G$ be a uniformly random $d$-regular graph on $\{1,2,\ldots, n\}$. The expected number of red edges in $G$ is $|E(\overline{H_n})|d/(n-1)$.
\end{lemma}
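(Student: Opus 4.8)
The plan is to use linearity of expectation together with the vertex-symmetry of the uniform distribution on $d$-regular graphs. Let $G$ be a uniformly random $d$-regular graph on $\{1,\ldots,n\}$ (where we assume $dn$ is even so that such graphs exist). The number of red edges in $G$ is $\sum_{e} \mathbf{1}[e\in G]$, summed over the red edges $e$, i.e.\ the edges of $\overline{H_n}$. By linearity of expectation,
\[
\ex(\text{red edges in }G) = \sum_{e\in E(\overline{H_n})} \pr(e\in G).
\]
So the whole problem reduces to computing $\pr(e\in G)$ for a fixed potential edge $e=\{u,v\}$.

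The key observation is that this probability does not depend on which pair $\{u,v\}$ we choose. Indeed, for any permutation $\sigma\in S_n$, relabelling vertices by $\sigma$ maps $d$-regular graphs bijectively onto $d$-regular graphs, so it preserves the uniform distribution. Since $S_n$ acts transitively on the set of $\binom{n}{2}$ vertex pairs, every potential edge of $K_n$ appears in $G$ with one common probability, say $p$. In particular the right-hand side above is simply $\card{E(\overline{H_n})}\,p$, and it remains to pin down $p$.

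To evaluate $p$ I would count edges two ways. On one hand, by linearity and symmetry the expected number of edges of $G$ is $\binom{n}{2}\,p$. On the other hand, every $d$-regular graph on $n$ vertices has exactly $dn/2$ edges, so this expectation is deterministically $dn/2$. Equating the two gives
\[
\binom{n}{2}\, p = \frac{dn}{2},
\qquad\text{hence}\qquad
p = \frac{dn/2}{n(n-1)/2} = \frac{d}{n-1}.
\]
Substituting back yields $\ex(\text{red edges in }G) = \card{E(\overline{H_n})}\, d/(n-1)$, as claimed.

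I do not expect any genuine obstacle here; the statement is a short averaging argument and the only point requiring a word of care is the symmetry claim. The one thing to state cleanly is that the uniform measure on labelled $d$-regular graphs is $S_n$-invariant and that $S_n$ acts transitively on vertex pairs, which together force all edge-probabilities to be equal; everything else is the double-counting identity $\sum_e \pr(e\in G)=dn/2$. No concentration or asymptotic input is needed, so the result is exact (not merely asymptotic) and holds for every $n$ and $d$ with $dn$ even.
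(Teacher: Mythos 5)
Your proposal is correct and follows essentially the same route as the paper: both arguments reduce the lemma via linearity of expectation to the single-edge probability $\pr(uv\in G)=d/(n-1)$, obtained by a symmetry argument. The only cosmetic difference is that the paper derives this probability locally (the $n-1$ potential edges at a fixed vertex $u$ are exchangeable and exactly $d$ are present), whereas you derive it globally ($S_n$-transitivity on vertex pairs plus the double count $\binom{n}{2}p=dn/2$); the two computations are trivially equivalent.
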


\begin{proof} Let $uv$ be a red edge in $K_n$ (that is, an edge in ${\overline{H_n}}$). 
We know that $u$ is incident with $d$ edges in $G$, and by symmetry each of the $n-1$ edges incident with $u$ in $K_n$ is equally likely to be in $G$. Thus, the probability that $uv\in G$ is $d/(n-1)$. There are exactly $|E(\overline{H_n})|$ red edges in $K_n$. By linearity of expectation, the expected number of red edges in $G$ is 
$|E(\overline{H_n})| d/(n-1)$.
\end{proof}

\section{The algorithm \algA} \label{sec:small}

Define
\begin{eqnarray}
\imax &=& 2|E(\overline{H_n})|d/n; \label{def-imax} \label{imax}\\
\accept &=& \cup_{i=0}^{\imax} \, {\strata}_i.
\end{eqnarray}
The following is a direct corollary of Lemma~\ref{lem:expectation},
using Markov's inequality.
\begin{corollary}\label{cor1:imax}
With probability at least $1/2+o(1)$, a uniformly random $d$-regular graph on 
$\{ 1,2,\ldots, n\}$ contains at most $\imax$ red edges.
\end{corollary}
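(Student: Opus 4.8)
The plan is to apply Markov's inequality to the number of red edges, using the expectation supplied by Lemma~\ref{lem:expectation}. Let $X$ denote the number of red edges in the uniformly random $d$-regular graph $G$; this is a non-negative, integer-valued random variable, and Lemma~\ref{lem:expectation} gives $\ex[X] = |E(\overline{H_n})|\,d/(n-1)$.

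First I would dispose of the trivial case $|E(\overline{H_n})| = 0$ separately, since then $\imax = 0$ and Markov's inequality is not directly applicable: here $H_n = K_n$ has no forbidden edges, so $X = 0 = \imax$ and the claim holds automatically. Assuming $|E(\overline{H_n})| \ge 1$, so that $\imax > 0$, Markov's inequality yields $\pr[X > \imax] \le \ex[X]/\imax$. Substituting the value of $\ex[X]$ together with the definition $\imax = 2|E(\overline{H_n})|\,d/n$ from~\eqref{def-imax}, the common factor $|E(\overline{H_n})|\,d$ cancels, leaving
\[
\pr[X > \imax] \le \frac{|E(\overline{H_n})|\,d/(n-1)}{2|E(\overline{H_n})|\,d/n} = \frac{n}{2(n-1)} = \tfrac12 + o(1).
\]
Taking complements then gives $\pr[X \le \imax] \ge 1 - n/(2(n-1)) = (n-2)/(2(n-1)) = \tfrac12 - o(1)$, which certifies the desired lower bound of $\tfrac12 + o(1)$.

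I do not expect any genuine obstacle here: the statement is a one-line consequence of Lemma~\ref{lem:expectation}, exactly as the surrounding text indicates. The only point deserving attention is the slight mismatch between the factor $n-1$ in the exact expectation and the factor $n$ used in the definition of $\imax$; their ratio $n/(n-1) = 1 + o(1)$ is precisely what produces the $o(1)$ slack in the statement, so the bound is tight up to this vanishing correction. I would also remark that, since $X$ is integer-valued, the event $\{X \le \imax\}$ is unaffected by whether or not $\imax$ is itself an integer, so no rounding subtlety arises.
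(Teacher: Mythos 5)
Your proposal is correct and takes exactly the paper's route: the paper proves Corollary~\ref{cor1:imax} with no more than the remark that it follows from Lemma~\ref{lem:expectation} via Markov's inequality, which is precisely the computation you carry out, with the lower bound $1/2 - 1/(2(n-1)) = 1/2 + o(1)$ valid under the usual convention that the $o(1)$ term may be negative. Your additional care (the trivial case $|E(\overline{H_n})|=0$, the $n/(n-1)$ ratio as the source of the $o(1)$ slack, and the absence of rounding issues for the integer-valued count) are sound refinements of the same one-line argument, not a different approach.
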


We now define the switching operation which we use in \algA,
called a \emph{3-edge-switching}. 
To define a 3-edge-switching from the current graph $G$, 
choose a sequence of vertices $(v_0,\ldots, v_5)$ such that 
$v_0v_1$ is a red edge in~$G$, 
$v_2v_3$ and $v_4v_5$ are edges in $G$ (with repetitions allowed), 
and the choice satisfies the following conditions:
\begin{itemize}
\item $v_2v_3$ and $v_4v_5$ are black edges in $G$, 
\item $v_0v_5$, $v_1v_2$ and $v_3v_4$ are all absent in $G$, and are all black in $K_n$;
\item The vertices $v_0,\ldots, v_5$ are distinct, except that $v_2=v_5$ is permitted.
\end{itemize}
We say that the 6-tuple $\boldsymbol{v}=(v_0,v_1,\ldots, v_5)$ is \emph{valid} if it 
satisfies these conditions.
Given a valid 6-tuple $\boldsymbol{v}$, the 3-edge-switching determined
by $\boldsymbol{v}$ deletes
the three edges $v_0v_1$, $v_2v_3$, $v_4v5$ 
and replaces them with the edges $v_1v_2$, $v_3v_4$, $v_0v_5$, producing
a new graph $G'$.  This switching operation is denoted by
$(G,{\boldsymbol v})\mapsto G'$, and is illustrated in  
Figure~\ref{3-switch}.  Red edges and red non-edges are also labelled `r', to ensure visibility.
\begin{figure}[ht!]
\begin{center}
\begin{tikzpicture}[scale=1.2]
\draw [-,very thick,red] (0,1.732) -- (1,1.732);
\node [above] at (0.5,1.8) {r}; 
\draw [-,very thick] (0,0) -- (-0.5,0.866);
\draw [-,very thick] (1,0) -- (1.5,0.866);
\draw [-,very thick,dashed] (0,0) -- (1,0);
\draw [-,very thick,dashed] (1.5,0.866) -- (1,1.732);
\draw [-,very thick,dashed] (0,1.732) -- (-0.5,0.866);
\draw [fill] (0,0) circle (0.1);
\draw [fill] (1,0) circle (0.1);
\draw [fill] (1.5,0.866) circle (0.1);
\draw [fill] (1,1.732) circle (0.1);
\draw [fill] (0,1.732) circle (0.1);
\draw [fill] (-0.5,0.866) circle (0.1);
\node [left] at (-1.5,0.366) {$G\in\mathcal{S}_i$};
\node [above] at (0.0,1.832) {$v_0$};
\node [above] at (1,1.832) {$v_1$};
\node [right] at (1.6,0.866) {$v_2$};
\node [below] at (1,-0.1) {$v_3$};
\node [below] at (0,-0.1) {$v_4$};
\node [left] at (-0.6,0.866) {$v_5$};
\draw [->,line width = 1mm] (3,0.866) -- (4,0.866);
\begin{scope}[shift={(6,0)}]
\draw [-,very thick,red,dashed] (0,1.732) -- (1,1.732);
\node [above] at (0.5,1.8) {r}; 
\draw [-,very thick,dashed] (0,0) -- (-0.5,0.866);
\draw [-,very thick,dashed] (1,0) -- (1.5,0.866);
\draw [-,very thick] (0,0) -- (1,0);
\draw [-,very thick] (1.5,0.866) -- (1,1.732);
\draw [-,very thick] (0,1.732) -- (-0.5,0.866);
\draw [fill] (0,0) circle (0.1);
\draw [fill] (1,0) circle (0.1);
\draw [fill] (1.5,0.866) circle (0.1);
\draw [fill] (1,1.732) circle (0.1);
\draw [fill] (0,1.732) circle (0.1);
\draw [fill] (-0.5,0.866) circle (0.1);
\node [right] at (2.5,0.366) {$G'\in\mathcal{S}_{i-1}$};
\node [above] at (0.0,1.832) {$v_0$};
\node [above] at (1,1.832) {$v_1$};
\node [right] at (1.6,0.866) {$v_2$};
\node [below] at (1,-0.1) {$v_3$};
\node [below] at (0,-0.1) {$v_4$};
\node [left] at (-0.6,0.866) {$v_5$};
\end{scope}
\end{tikzpicture}
\caption{A 3-edge switching}
\label{3-switch}
\end{center}
\end{figure}
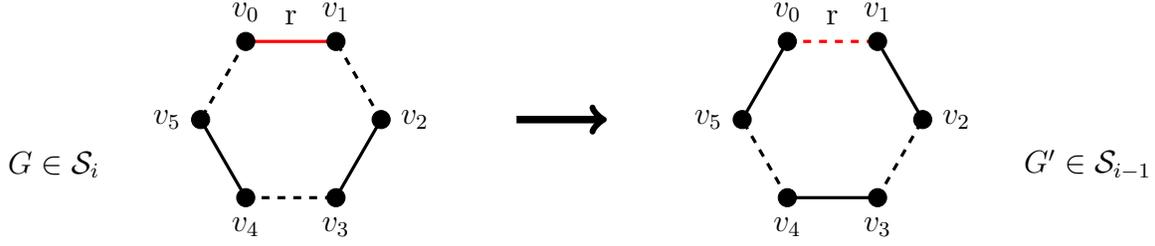

Each 
\emph{3-edge switching} reduces the number of red edges by exactly one. 
The inverse operation, obtained by reversing the arrow in Figure~\ref{3-switch},
is called an {\em inverse 3-edge switching}.  The 6-tuple 
$(v_0,v_1,\ldots, v_5)$ is valid
for the inverse 3-edge switching if exactly one new red edge $v_0v_1$ is introduced,
no multiple edges are introduced, and all vertices are distinct
except possibly $v_2=v_5$.

Define
\begin{align*}
\UB(i)&=2i(dn)^2,\\ 
\LB(i) &=
  (2|E({\overline{H_n}})|-2i) d^2 (dn-2i-8d)-4|E({\overline{H_n}})| d^3 (d+\Delta) - 4i \Delta d^2n.
\end{align*}
Let $f(G)$ denote the number of valid 6-tuples $(v_0,v_1,\ldots, v_5)$ which
determine a 3-edge switchings that can be applied to $G$, and let $b(G)$ 
denote the 
number of valid 6-tuples $(v_0,v_1,\ldots, v_5)$ which determine a 
inverse 3-edge switchings that can be applied to $G$.
In the following lemma, we show that $f(G)$ is approximately $\UB(i)$ and $b(G)$ is approximately $\LB(i)$ for $G\in\strata_i$. 

\begin{lemma}\label{lem:fb}
Suppose that $(d+\Delta)d\Delta = o(n)$.
Let $i\in \{1,\ldots, i_1\}$. For all $G\in\strata_i$ we have
\begin{align*}
 2i(dn-2i-4d)(dn-2i-7d)-6id^2(d+\Delta)n&\le f(G)\le \UB(i),\\
 \LB(i)&\le b(G) \le 2|E({\overline{H_n}})| d^3n.
 \end{align*}
Hence 
\[ f(G) = \UB(i)\left(1 +O\left(\frac{d+\Delta}{n}\right)\right),\quad  b(G) 
 = 2|E({\overline{H_n}})| d^3n\left(1 + O\left(\frac{d+\Delta}{n}
  \right)\right).\]
\end{lemma}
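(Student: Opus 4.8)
The plan is to bound $f(G)$ and $b(G)$ by direct enumeration of valid $6$-tuples, producing an exact upper bound and a matching lower bound for each, so that the multiplicative error estimates in the final two displays follow immediately. The key observation driving everything is that the forbidden conditions (edges that must be present, non-edges that must be absent and black, distinctness of vertices) each exclude only a small fraction of tuples, of relative size $O((d+\Delta)/n)$, under the hypothesis $(d+\Delta)d\Delta=o(n)$.

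First I would estimate $f(G)$ for $G\in\strata_i$. The upper bound $\UB(i)=2i(dn)^2$ comes from counting without any of the absence or distinctness restrictions: there are $2i$ ways to pick the ordered red edge $v_0v_1$ (each of the $i$ red edges in two orientations), and then at most $dn$ ordered choices for each of the two black edges $v_2v_3$ and $v_4v_5$ (since $G$ is $d$-regular there are $dn$ ordered edges). This gives $2i\cdot(dn)\cdot(dn)=\UB(i)$, so $f(G)\le\UB(i)$ holds trivially. For the lower bound I would subtract off the tuples excluded by the three ``absent and black'' conditions on $v_0v_5$, $v_1v_2$, $v_3v_4$ and by the distinctness requirements. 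The leading factor $2i(dn-2i-4d)(dn-2i-7d)$ reflects choosing the two black edges while avoiding the already-used vertices and edges, and the subtracted term $6id^2(d+\Delta)n$ accounts for configurations where one of the three new edges $v_0v_5,v_1v_2,v_3v_4$ would fail to be black-and-absent; the factor $d+\Delta$ enters precisely because a fixed endpoint has at most $d$ graph-neighbours and at most $\Delta$ red neighbours, so at most $d+\Delta$ forbidden partners per endpoint.

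Next I would estimate $b(G)$ for the inverse switching. Here the leading order is governed by the number of ordered red \emph{non-edges} that can be created, which is controlled by $|E(\overline{H_n})|$ rather than by $i$: the upper bound $2|E(\overline{H_n})|\,d^3 n$ comes from choosing the ordered pair $v_0v_1$ among red non-edges (at most $2|E(\overline{H_n})|$) and then the remaining structure in at most $d^3 n$ ways. For the lower bound $\LB(i)$ I would again start from this main term $(2|E(\overline{H_n})|-2i)d^2(dn-2i-8d)$ — the $-2i$ correcting for red pairs that are already edges rather than non-edges — and then subtract the correction terms $4|E(\overline{H_n})|d^3(d+\Delta)$ and $4i\Delta d^2 n$ that bound tuples violating the ``no multiple edges'' and distinctness conditions. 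As before, each correction carries a factor of $d+\Delta$ or $\Delta$ from the number of forbidden partners at a fixed vertex, times the number of ways to choose the rest of the tuple.

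Finally, to pass from the two-sided bounds to the stated asymptotic equalities, I would verify that both correction terms are $O((d+\Delta)/n)$ times the respective main term. For $f(G)$, this amounts to checking $2i+Cd = O(dn\cdot(d+\Delta)/n)$ and $6id^2(d+\Delta)n = O(\UB(i)\cdot(d+\Delta)/n)=O(i d^2 n(d+\Delta))$, which holds directly, while $i\le\imax=2|E(\overline{H_n})|d/n\le dn$ keeps $i$ within the regime where $2i/(dn)=O((d+\Delta)/n)$ after using $(d+\Delta)d\Delta=o(n)$; I expect the bookkeeping here to be where one must be careful, since $i$ ranges up to $\imax$ and one needs the cruder bound $i\le\imax$ together with the hypothesis to absorb the $2i$ and $8d$ terms into the error. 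The main obstacle, therefore, is not any single hard estimate but rather the careful inclusion–exclusion accounting that shows every excluded class of tuples is genuinely lower order: one must confirm that overcounting from overlapping forbidden conditions (e.g.\ two of the new edges sharing a forbidden partner) contributes only to higher-order terms, so that the simple first-order subtraction already yields a valid one-sided bound.
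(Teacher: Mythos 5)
Your proposal takes essentially the same route as the paper's proof: the identical direct enumeration with first-order subtraction of defective tuples, producing the same main terms and the same correction terms $6id^2(d+\Delta)n$, $4|E(\overline{H_n})|d^3(d+\Delta)$ and $4i\Delta d^2 n$, followed by the same absorption of all corrections into a relative error of $O((d+\Delta)/n)$. Two minor slips worth fixing: the term $4i\Delta d^2 n$ accounts for tuples in which one of the deleted edges $v_0v_5$, $v_1v_2$ is a red edge of $G$ (distinctness is already handled inside the factor $dn-2i-8d$), and the final asymptotics require $i\le \imax\le d\Delta$, obtained from $|E(\overline{H_n})|\le \Delta n/2$, rather than your weaker bound $\imax\le dn$, which would only give $2i/(dn)=O(1)$.
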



\begin{proof}
For the upper bound of $f(G)$, note that there are exactly $2i$ ways to choose 
$(v_0, v_1)$, and then at most $dn$ ways to choose $(v_2,v_3)$ and at most 
$dn$ ways to choose $(v_4,v_5)$.

For the lower bound of $f(G)$, there are $2i$ ways to choose $(v_0, v_1)$. 
Given that, there are at least $(dn-2i-4d)$ ways to choose $(v_2, v_3)$ 
such that $v_2v_3$ is a black edge in $G$, and $v_0$, $v_1$, $v_2$ and $v_3$ 
are all distinct. Then there are at least $(dn-2i-7d)$ ways to choose $(v_4,v_5)$ 
such that $v_4v_5$ is a black edge in $G$ and 
\[ v_4\not\in \{ v_0,v_1,v_2,v_3\},\qquad v_5\not\in\{ v_0,v_1,v_3\}\]
(as $v_5$ is allowed to coincide with $v_2$).
This gives at least $2i(dn-2i-4d)(dn-2i-7d)$ choices of $(v_0,\ldots, v_5)$,
but some of these choices are not valid: specifically, we must subtract
the number of choices such that one of $v_0v_5$, $v_1v_2$ or $v_3v_4$ is 
either an edge in $G$ (either black or red), or is a red non-edge 
(that is, an edge in ${\overline{H_n}}\cap {\overline G}$).
\begin{itemize}
\item There are at most
$3\cdot 2id^2dn$ choices such that $v_0v_5$ or $v_1v_2$ or $v_3v_4$ 
is an edge in $G$;
\item There are at most $3\cdot 2idn\Delta d$ choices such that $v_0v_5$ or $v_1v_2$ or $v_3v_4$ is a red non-edge.
\end{itemize}
Subtracting these yields the desired lower bound for $f(G)$.

Next we consider $b(G)$. For the upper bound, there are at most 
$2|E(\overline{H})|$ ways to choose $(v_0, v_1)$, since  $|E(\overline{H})|$
is an upper bound on the number of red non-edges in $G$.
Then there are at most $d$ 
ways to choose $v_2$, at most $d$ ways to choose $v_5$ and at most $dn$ ways to 
choose $(v_3,v_4)$. This yields the required upper bound. 

For the lower bound, there are exactly $2(|E(\overline{H})|-i)$ ways to choose
$(v_0,v_1)$, as the number of red non-edges in $G$ is exactly 
$|E(\overline{H})|-i$. Then there are exactly $d^2$ ways to choose $v_2$ and $v_5$ 
such that $v_0v_5$ and $v_1v_2$ are edges in $G$ (not necessarily black).
Note here that $v_2=v_5$ is permitted. The number of ways to choose $(v_3,v_4)$ 
such that $v_3v_4$ is a black edge in $G$, and 
$\{v_3,v_4\}\cap \{v_0,v_1,v_2,v_5\}=\emptyset$ is at least $dn-2i-8d$. 
This gives at least $2(|E(\overline{H})|-i)d^2(dn-2i-8d)$ 
choices for $(v_0,\ldots, v_5)$, but some of these choices are not
valid: specifically, we must subtract
the number of choices where one of $v_2v_3$, $v_4v_5$ is either an edge 
in $G$ or a red non-edge, or one of $v_0v_5$, $v_1v_2$ is a red edge.  
\begin{itemize}
\item There are at most $2\cdot 2|E(\overline{H})|d^3(d+\Delta)$ choices
such that $v_2v_3$ or $v_4v_5$ is either an edge in $G$ or a red non-edge;
\item There are at most $2\cdot 2i\Delta d\cdot d n$ choices such that
 $v_0v_5$ or $v_1v_2$ is a red edge.
\end{itemize}
Subtracting these gives the stated lower bound for $b(G)$. 

Finally, the last statement 
follows immediately from the 
above bounds by~(\ref{imax}) and noting that $|E({\overline{H_n}})|\le \Delta n/2$.  
\end{proof}

\subsection{Algorithm \algA: definition and analysis} \label{sec:A}

First, \algA\ calls the Gao--Wormald algorithm REG~\cite{GWSIAM} 
 to generate a uniformly random $d$-regular graph $G$ on $\{ 1,2,\ldots, n\}$. 
If $G$ contains more than $\imax$ red edges then \algA\ restarts. 
Otherwise, \algA\ iteratively performs a sequence of switching steps. 
At each step, there is a chance that the current graph, $G$, might be rejected
(this is called \emph{f-rejection}) and there is a chance that the graph $G'$ selected
as the ``next'' graph might be rejected (this is called \emph{b-rejection}).
Here ``f'' is short for ``forward'' and ``b'' is short for ``backward''.
The probability of f-rejection and b-rejection is carefully chosen to maintain uniformity.

Let $G_t$ be the graph obtained after $t$ switching steps, and assume $G_t=G\in\strata_i$. 
The $(t+1)$-th switching step is composed of the following substeps: 

\begin{enumerate}
\item[(i)] If $i=0$ then output $G$.
\item[(ii)] If $i>0$ then uniformly at random choose 
a red edge in $G$ and choose two further edges in $G$ (of any colour), 
with repetition allowed. 
Randomly label the two endvertices of the red edge as $v_0$ and $v_1$, 
and the endvertices of the other two edges as $v_2$ and 
$v_3$, and $v_4$ and $v_5$ respectively. 
If $\boldsymbol{v}=(v_0,\ldots, v_5)$ is a valid 6-tuple then let $(G,\boldsymbol{v})\mapsto G'$ 
be the 3-edge switching induced by this 6-tuple, as in Figure~\ref{3-switch}. 
Otherwise (when the 6-tuple is not valid), perform an f-rejection.
\item[(iii)] If no f-rejection is performed then perform a b-rejection with 
probability 
\[1-\frac{b(G')}{\LB(i-1)}.\] 
\item[(iv)] If no b-rejection is performed then set $G_{t+1}=G'$.
\end{enumerate} 
If any rejection occurs then \algA\ restarts.  

\medskip

There is no deterministic
upper bound on the running time of the algorithm, due to the chance of rejections.
That is, \algA\ is a \emph{Las Vegas} algorithm.  But to prove
Theorem~\ref{thm:A} we will show that the expected number of restarts is $O(1)$.

The proof of the following lemma is deferred to Section~\ref{sec:runtime}.

\begin{restatable}{lemma}{Arun}\label{algA-runtime}
\algA\ can be implemented so that if there are $O(1)$ restarts during its run,
its time complexity is $O((d+\Delta)^3 n)$.
\end{restatable}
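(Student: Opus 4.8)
Since \algA\ restarts only when an f- or b-rejection occurs, conditioning on $O(1)$ restarts means the run consists of $O(1)$ \emph{attempts}, each of which calls REG once to produce an initial $G_0\in\accept$ and then performs switching substeps until it outputs or rejects. The plan is to bound the cost of one attempt and split it into (a)~the REG call, (b)~a one-time \emph{setup} of the data structures and bookkeeping quantities, and (c)~the switching substeps. For (a), the Gao--Wormald generator produces a uniform $d$-regular graph in expected time $O(d^3n)=O((d+\Delta)^3n)$, since $(d+\Delta)d\Delta=o(n)$ forces $d=o(\sqrt n)$. For the number of substeps in (c): each non-rejecting substep is a $3$-edge switching, which removes exactly one red edge, and $G_0\in\accept$ has at most $\imax$ red edges by~(\ref{imax}); as $|E(\overline{H_n})|\le \Delta n/2$ we get $\imax\le d\Delta$, so an attempt runs at most $d\Delta$ switching substeps. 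It therefore suffices to show that setup costs $O((d+\Delta)^3n)$ and that each substep costs $O(\mathrm{poly}(d,\Delta))$; since the switching phase performs only $d\Delta=o(dn)$ such operations, it will be dominated by the setup once the per-operation cost is pinned down.

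First I would fix data structures so that the \emph{forward} substep~(ii) is $O(1)$: store $E(G_t)$ and $E(\overline{H_n})$ in hash tables (for $O(1)$ adjacency and colour tests), keep dynamic arrays of the edges and of the red edges of $G_t$ (for $O(1)$ uniform sampling), and keep neighbour lists. With these, sampling the red edge and two further edges, labelling the endpoints, and testing validity of the $6$-tuple are all $O(1)$, an f-rejection is detected in $O(1)$, and each committed switching updates the structures in $O(1)$ since only $O(1)$ edges change.

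The hard part is substep~(iii), whose rejection probability involves the \emph{global} count $b(G')$: this is a sum over all $\Theta(\Delta n)$ red non-edges of $G'$ and cannot be recomputed from scratch each step (its inclusion--exclusion ``cross term'', counting $6$-tuples with both $v_2v_3$ and $v_4v_5$ degenerate, alone costs $\Theta(d^5n)$ when $d\asymp\Delta$). My approach is to carry $b(G_t)$, together with a bounded collection of auxiliary neighbourhood sums (black degrees, red non-degrees, and one- and two-hop sums of these), as quantities maintained \emph{incrementally}. When a switching modifies its $O(1)$ edges I would recompute $\Delta b$ directly. The enabling observation is that, although a single edge lies in $\Theta(n^2)$ inverse switchings, the induced change of $b$ \emph{factorises}: once the modified edge is pinned in a given role of the $6$-tuple (there are $O(1)$ roles), the otherwise-global count collapses through $d$-regularity identities such as $\sum_{v}\mathbf 1[x\in N_{G'}(v)]=d$. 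For instance, the valid inverse switchings created by a newly present edge in the middle role $v_3v_4$ number $\Theta(|E(\overline{H_n})|\,d^2)$ up to \emph{local} corrections supported on the neighbourhoods of that edge, each itself an evaluable one-hop sum. Carrying this out over all roles gives $\Delta b$ (and the updates of all auxiliary sums) in $O(\mathrm{poly}(d,\Delta))$; the candidate value $b(G')=b(G_t)+\Delta b$ combined with the closed form $\LB(i-1)$ yields the substep~(iii) coin exactly, preserving uniformity, and I would commit the update only on acceptance.

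Finally I would obtain the initial value $b(G_0)$ and the initial auxiliary sums by building $G_0$ edge by edge from the empty graph, applying the same pin-one-edge update at each of the $O(dn)$ insertions (now with the current, non-regular, degree sums); this avoids the unaffordable from-scratch cross-term evaluation and costs $O(dn\cdot\mathrm{poly}(d,\Delta))$. Summing (a)--(c) over the $O(1)$ attempts then gives the claimed bound. The main obstacle, where I expect the real work, is exactly this bookkeeping: writing $b$ precisely as a combination of a constant number of maintainable statistics, verifying that \emph{every} inclusion--exclusion term (the cross terms in particular) either factorises or becomes local once an edge is pinned, and controlling the per-operation polynomial to $O((d+\Delta)^2)$ so that setup stays $O((d+\Delta)^3n)$. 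I would use the tight sandwiching of $b(G')$ in Lemma~\ref{lem:fb} as a consistency check, but the explicit accounting of the correction terms is the crux.
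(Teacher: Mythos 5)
Your plan is correct in outline and, on the dominant cost, is essentially the paper's argument: both identify the evaluation of $b(\cdot)$ for the b-rejection coin as the only non-trivial cost, both write $b$ via inclusion--exclusion over a bounded family of violated constraints, and both maintain counts of small coloured structures incrementally so that each of the $O(\imax)=O(d\Delta)$ switching steps costs $O((d+\Delta)^2)$, leaving an $O((d+\Delta)^3n)$ setup as the dominant term. Where you genuinely diverge is initialization. The paper computes $b(G_0)$ \emph{from scratch}: a brute-force pass over the quadruple $(v_5,v_0,v_1,v_2)$ in $O(d^2\Delta n)$, plus one-time tables of 2-path and 3-path counts by colour type (and counts of structures on at most $4$ vertices) in the supergraph $\widetilde{G}$ of maximum degree $d+\Delta$, built in $O((d+\Delta)^3n)$, after which every inclusion--exclusion term $b_W$ is evaluated from the tables. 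You instead bootstrap by inserting the $dn/2$ edges of $G_0$ one at a time into the empty graph and running the same pinned-edge update, for $O(dn(d+\Delta)^2)=O((d+\Delta)^3n)$ total; both meet the bound. Your route buys a single update routine serving both phases, but it forfeits the $d$-regularity identities mid-build (intermediate degrees are not $d$), so every sum you claim ``collapses'' must be re-expressed in maintained statistics --- and when you unwind your two-hop correction sums (e.g.\ the cross term in which the pinned edge constrains both $v_2v_3$ and $v_4v_5$), the statistics you need are exactly the paper's typed 3-path counts between vertex pairs in $\widetilde{G}$ (black--$\widetilde{\text{red}}$--black paths and the like), so the two bookkeeping schemes coincide in substance. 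One factual correction: your premise that a one-off from-scratch evaluation is ``unaffordable'' (your $\Theta(d^5n)$ cross-term estimate) is only true for naive enumeration; with the typed path tables the paper evaluates each $b_W$ within the $O((d+\Delta)^3n)$ budget, so the direct initialization works and is rather easier to verify than your edge-by-edge invariant, whose correctness for every inclusion--exclusion term you acknowledge but do not carry out --- admittedly the paper, too, verifies only representative terms and asserts the rest are similar.
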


We now prove  Theorem~\ref{thm:A}, restated below for convenience. 

\thmA*
\begin{proof}
First we prove uniformity by induction. Recall that $G_0$ is a uniformly random 
$d$-regular graph in $\accept$ if no initial rejection occurs. 
If $G_0\in \strata_i$ then clearly $G_0$ is uniformly distributed over $\strata_i$. 

Next we prove that if $G_t$ is uniformly distributed over $\strata_i$ then $G_{t+1}$ 
is uniformly distributed over $\strata_{i-1}$, assuming that no rejection
occurs.  
For every $G\in\strata_i$ and $G'\in\strata_{i-1}$, let $\Psi(G,G')$ 
denote the set of valid 6-tuples $\boldsymbol{v}=(v_0,\ldots, v_5)$ 
such that $(G,\boldsymbol{v})\mapsto G'$ is a 3-edge-switching,
and let 
\begin{equation}
\label{psi-def}
 \Psi(G') = \bigcup_{G\in\strata_i} \Psi(G,G').
\end{equation}
Given $G_t=G$, note that $\UB(i)$ is exactly the 
number of choices of a 6-tuple of vertices $(v_0,\ldots, v_5)$, with repetition 
allowed, such that $v_0v_1$ is a red edge in $G$, and $v_2v_3$ and $v_4v_5$ are edges 
of $G$.
Thus, the probability that $G$ is converted to $G'$ without f-rejection in step $t+1$ 
is equal to 
\[ \frac{|\Psi(G,G')|}{\UB(i)}.
\] 
The probability that no b-rejection occurs is equal to 
$\LB(i-1)/b(G')=\LB(i-1)/|\Psi(G')|$.
Write $\rho_t=\pr(G_t=G\mid G_t\in\strata_i)$, 
which is invariant over all graphs $G\in \strata_i$ by the inductive hypothesis. 
Then
\begin{align*}
\pr(G_{t+1}=G'\mid G_t\in\strata_i) &= \sum_{G\in \strata_i}\,
\pr(G_t=G\mid G_t\in\strata_i)\cdot 
    \frac{|\Psi(G,G')|}{\UB(i)}\cdot
   \frac{\LB(i-1)}{|\Psi(G')|}\\
  &= \rho_t \,\frac{\LB(i-1)}{\UB(i)},
\end{align*}
using the fact that the union in (\ref{psi-def}) is disjoint.
Hence $\pr(G_{t+1}=G'\mid G_t\in\strata_i)$ does not depend on $G'$, which implies
that $G_{t+1}$ is uniformly distributed over $\strata_{j-1}$ if no rejection occurs.

Next, we prove that if $(d+\Delta)d\Delta=o(n)$ then \algA\ runs in 
$O((d+ \Delta)^3 n)$ time in expectation. The runtime for generating a random $d$-regular graph on $\{ 1,2,\ldots, n\}$ using the Gao--Wormald algorithm~\cite{GWSIAM} is $O(d^3n)$ in expectation. 
By Lemma~\ref{lem:fb}, the probability of an f-rejection or a b-rejection in each step is $O((d+\Delta)/n )$. By definition of $i_1$ and $\accept$, the algorithm
 \algA\ performs at most $\imax=O(|E({\overline{H_n}})|d/n)=O(d\Delta)$ 
switching steps. Thus, the overall probability of any 
f-rejection or b-rejection is at most
\[
 O\left(\frac{d+\Delta}{n} \right)\imax = O\left(\frac{(d+\Delta)d\Delta}{n} \right)
\]
which is $o(1)$ because $(d+\Delta)d\Delta=o(n)$. It follows 
from this and from Corollary~\ref{cor1:imax}, that in expectation \algA\ restarts $O(1)$ times. 
Therefore, by Lemma~\ref{algA-runtime},
the time complexity of \algA\ is $O((d+\Delta)^3n)$ in expectation. 
\end{proof}

We close this section by proving the following lemma, which follows easily 
from Lemma~\ref{lem:fb}. This result, which will be useful later, only
requires a rather weak condition on $d$ and $\Delta$.

\begin{lemma}\label{lem:sizeRatio}
Assume that $d+\Delta=o(n)$. Then for any $i=O(d\Delta)$, 
\[
\frac{|\strata_{i-1}|}{|\strata_i|}=\frac{i\, n}{|E({\overline{H_n}})|d}\left(1 + O\left(\frac{d+\Delta}{n}
  \right)\right).
\]
\end{lemma}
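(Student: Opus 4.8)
The plan is to relate the sizes of consecutive strata by a standard double-counting argument over the 3-edge switchings, since each such switching maps a graph in $\strata_i$ to a graph in $\strata_{i-1}$. I would count the set of pairs $(G, G')$ with $G\in\strata_i$, $G'\in\strata_{i-1}$, weighted by the number of valid 6-tuples $\boldsymbol{v}$ realising the switching $(G,\boldsymbol{v})\mapsto G'$. Counting this quantity in two ways gives
\[
 \sum_{G\in\strata_i} f(G) = \sum_{G'\in\strata_{i-1}} b(G'),
\]
because summing the number of forward switchings out of each $G\in\strata_i$ is the same as summing the number of inverse switchings into each $G'\in\strata_{i-1}$. (Here I am using that a forward switching from $\strata_i$ lands in $\strata_{i-1}$, and its reverse is exactly an inverse switching as defined before Lemma~\ref{lem:fb}.)

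The key step is then to invoke Lemma~\ref{lem:fb}, which controls $f(G)$ and $b(G')$ individually up to a relative error factor of $1+O((d+\Delta)/n)$. Specifically, for $G\in\strata_i$ we have $f(G) = \UB(i)(1+O((d+\Delta)/n))$, and for $G'\in\strata_{i-1}$ we have $b(G') = 2|E({\overline{H_n}})| d^3 n\,(1+O((d+\Delta)/n))$. Since these estimates hold uniformly over all graphs in the respective strata, the sums factor out cleanly:
\[
 |\strata_i|\,\UB(i)\Bigl(1+O\bigl(\tfrac{d+\Delta}{n}\bigr)\Bigr)
   = |\strata_{i-1}|\, 2|E({\overline{H_n}})| d^3 n\Bigl(1+O\bigl(\tfrac{d+\Delta}{n}\bigr)\Bigr).
\]
Rearranging gives $|\strata_{i-1}|/|\strata_i|$ as the ratio $\UB(i)/(2|E({\overline{H_n}})|d^3 n)$, up to the combined error factor, and substituting $\UB(i)=2i(dn)^2 = 2i\,d^2 n^2$ yields
\[
 \frac{\UB(i)}{2|E({\overline{H_n}})| d^3 n}
   = \frac{2i\, d^2 n^2}{2|E({\overline{H_n}})| d^3 n}
   = \frac{i\, n}{|E({\overline{H_n}})|\, d},
\]
which is exactly the claimed main term.

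One point I would check carefully is the hypotheses: Lemma~\ref{lem:fb} is stated under $(d+\Delta)d\Delta=o(n)$, but the present lemma only assumes the weaker condition $d+\Delta=o(n)$ together with $i=O(d\Delta)$. I expect this is not a genuine obstacle, since the error analysis in Lemma~\ref{lem:fb} only needs the individual terms such as $i$, $d$, $\Delta$, $d^2(d+\Delta)n$ to be negligible relative to the main term $\UB(i)$ or $2|E({\overline{H_n}})|d^3 n$, and these comparisons go through whenever $i=O(d\Delta)$ and $d+\Delta=o(n)$ — one simply re-derives the relative error bound under the weaker hypotheses rather than quoting the final display of Lemma~\ref{lem:fb} verbatim. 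The main thing to verify is that the leading terms in the two-sided bounds of Lemma~\ref{lem:fb} still dominate their correction terms under $i=O(d\Delta)$ and $d+\Delta=o(n)$; granting that, the argument is a clean one-line consequence of the switching bijection and the estimates already established.
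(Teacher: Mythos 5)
Your proposal is correct and takes essentially the same route as the paper: the published proof is exactly your double-counting identity, written as $|\strata_{i-1}|/|\strata_i|=\ex f(G)/\ex b(G')$, followed by an appeal to Lemma~\ref{lem:fb}. Your closing caveat is, if anything, more careful than the paper, which cites Lemma~\ref{lem:fb} verbatim despite the weaker hypotheses; in re-deriving the bounds one should note that the correction term $i/|E(\overline{H_n})|$ in the estimate for $b(G')$ is only $O((d+\Delta)/n)$ when $|E(\overline{H_n})|$ is not too small relative to $i\,n/(d+\Delta)$ (as in the regular-host setting $|E(\overline{H_n})|=\Delta n/2$, where the lemma is actually applied later).
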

\begin{proof}
Let $\ex\, f(G)$ be the expected value of $f(G)$ when $G\in \strata_i$ is chosen
uniformly at random, and let
$\ex\, b(G')$ be the expected value of $b(G')$ when $G'\in\strata_{i-1}$ is chosen uniformly 
at random.
Then
\[
\frac{|\strata_{i-1}|}{|\strata_i|}=\frac{\ex\, f(G)}{\ex\,  b(G')}
\]
and
the result follows by Lemma~\ref{lem:fb}. 
\end{proof}

\medskip

When $(d+\Delta)d\Delta$ is no longer negligible compared to $n$, 
the probability that an f-rejection or b-rejection occurs in \algA\
before reaching $\strata_0$ is very close to 1.  In this case,
\algA\ becomes very inefficient as it has to restart many times. 
In Section~\ref{sec:B} we define \algB, which will use 
4-edge switchings instead of 3-edge switchings, giving more room 
for performing valid operations.  In addition, various new ideas 
will be incorporated into the design of \algB\ to achieve uniformity in 
the output and efficiency when $d^2+\Delta^2=o(n)$. 
Since the uniform sampler \algB\ can be treated as an extension of the
approximate sampler \algC, we will introduce \algC\ first, in
Section~\ref{sec:approx} below.  
The runtime analysis of \algA\ and \algB, and the proof that the output of \algC\ is
sufficiently close to uniform, are deferred
to Section~\ref{sec:runtime}. 

For the algorithms \algB\ and \algC, we restrict the host graph $H_n$ to be regular
(specifically, $(n-\Delta-1)$-regular), to simplify the analysis. 
However, we believe that
\algB\ (and \algC) can be modified to work for irregular host graphs~$H_n$.



\section{The approximate algorithm: \algC}\label{sec:approx}

We now assume that $H_n$ is $(n-\Delta-1)$-regular, which implies that
$|E(\overline{H_n})|=\Delta n/2$. 
Define
\begin{align}
\imax &= \dfrac23 d\Delta; \label{def-imaxb}\\
\accept &= \cup_{i=0}^{\imax} \, {\strata}_i. \label{def-A0}
\end{align}

\smallskip

\begin{corollary}\label{cor2:imax}
With probability at least $1/4+o(1)$, a uniformly random $d$-regular graph on 
$\{ 1,2,\ldots, n\}$ contains at most $\imax$ red edges.
\end{corollary}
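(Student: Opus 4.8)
The plan is to prove this exactly as Corollary~\ref{cor1:imax} was proved: compute the expected number of red edges via Lemma~\ref{lem:expectation}, and then apply Markov's inequality. Write $X$ for the number of red edges in a uniformly random $d$-regular graph $G$ on $\{1,2,\ldots,n\}$; this is a non-negative integer-valued random variable.

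First I would use the fact that $H_n$ is $(n-\Delta-1)$-regular, so that $|E(\overline{H_n})| = \Delta n/2$, and substitute this into Lemma~\ref{lem:expectation} to obtain
\[
\ex\, X = \frac{|E(\overline{H_n})|\, d}{n-1} = \frac{d\Delta n}{2(n-1)} = \frac{d\Delta}{2}\bigl(1 + O(1/n)\bigr).
\]
Then, since $\imax = \frac{2}{3}d\Delta$ by~(\ref{def-imaxb}), Markov's inequality gives
\[
\pr(X \geq \imax) \leq \frac{\ex\, X}{\imax} = \frac{\frac{d\Delta}{2}\bigl(1+O(1/n)\bigr)}{\frac{2}{3}d\Delta} = \frac{3}{4} + o(1).
\]
Taking complements, and using $\{X>\imax\}\subseteq\{X\geq\imax\}$, I would conclude that $\pr(X\leq\imax) = 1 - \pr(X>\imax) \geq \frac{1}{4} + o(1)$, which is the claim.

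There is essentially no obstacle here: the statement is a one-line corollary of Lemma~\ref{lem:expectation} and Markov's inequality, differing from Corollary~\ref{cor1:imax} only in the value of $\imax$ and hence in the resulting constant ($3/4$ rather than $1/2$). The only point meriting attention is the regularity identity $|E(\overline{H_n})| = \Delta n/2$, which replaces the generic bound $|E(\overline{H_n})|\le \Delta n/2$ used elsewhere and sharpens the expectation to $\frac{d\Delta}{2}(1+O(1/n))$; the residual $O(1/n)$ arises solely from the factor $n/(n-1)$ and is absorbed into the additive $o(1)$, so that applying Markov at the threshold $\imax$ itself (rather than $\imax+1$) avoids any dependence on integrality or on whether $d\Delta$ tends to infinity.
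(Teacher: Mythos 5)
Your proposal is correct and is essentially identical to the paper's proof: the paper likewise notes that regularity of $H_n$ gives expected red-edge count asymptotic to $d\Delta/2$ via Lemma~\ref{lem:expectation}, and then applies Markov's inequality at the threshold $\imax=\frac{2}{3}d\Delta$. Your additional remarks (the exact identity $|E(\overline{H_n})|=\Delta n/2$, the $n/(n-1)$ factor absorbed into the $o(1)$, and the harmless distinction between $\{X>\imax\}$ and $\{X\ge\imax\}$) merely make explicit what the paper's two-line proof leaves implicit.
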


\begin{proof}
By Lemma~\ref{lem:expectation}, the expected number of red edges in a 
random $d$-factor of $G$ is asymptotic to $\Delta d/2$. The result follows by Markov's inequality.
\end{proof}

Gao and Wormald~\cite{GWSIAM} 
gave an algorithm called REG* for
generating regular graphs asymptotically approximately uniformly in runtime $O(dn)$. 
\algC\ uses REG* to generate a random $d$-regular graph on $K_n$. 

\begin{corollary}\label{cor3:imax}
With probability at least $1/4+o(1)$, the output of REG* contains at most $\imax$ red edges.
\end{corollary}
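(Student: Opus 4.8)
The plan is to deduce this directly from Corollary~\ref{cor2:imax} together with the approximate-uniformity guarantee of REG*. Corollary~\ref{cor2:imax} already establishes that a \emph{uniformly} random $d$-regular graph on $\{1,\ldots,n\}$ lies in $\accept$---equivalently, contains at most $\imax$ red edges---with probability at least $1/4+o(1)$. The only remaining task is to transfer this bound from the exactly-uniform distribution to the actual output distribution of REG*, and this is where the approximate-uniformity of REG* enters.

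First I would recall the defining property of REG* from~\cite{GWSIAM}: in the regime under consideration (where $d^2+\Delta^2=o(n)$, hence in particular $d=o(\sqrt{n})$, which lies inside the range for which REG* is analysed), the output distribution $\mu$ of REG* is within $o(1)$ of the uniform distribution $U$ on $d$-regular graphs in total variation distance. I would then invoke the elementary fact that for any event $A$ and any two distributions with $d_{\mathrm{TV}}(\mu,U)\le\varepsilon$ one has $\lvert \mu(A)-U(A)\rvert\le\varepsilon$. Applying this with the event $A=\accept$ and $\varepsilon=o(1)$ gives $\mu(\accept)\ge U(\accept)-o(1)$, and combining with Corollary~\ref{cor2:imax} yields $\mu(\accept)\ge 1/4+o(1)$, which is precisely the claim.

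There is essentially no serious obstacle here; the corollary is a routine consequence of the total variation bound relating REG* to the uniform distribution. The one point deserving care is confirming that the parameter regime $d^2+\Delta^2=o(n)$ is indeed inside the range in which REG* achieves $o(1)$ total variation error, so that the transfer step is legitimate. This is where the argument must lean on the quantitative statement of the REG* guarantee from~\cite{GWSIAM}, rather than merely on the existence of an $O(dn)$-time approximate sampler.
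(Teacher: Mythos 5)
Your proposal is correct and follows exactly the paper's own argument: the paper likewise notes that when $d^2=o(n)$ (implied by $d^2+\Delta^2=o(n)$) the output of REG* is within $o(1)$ of uniform in total variation distance by~\cite[Section~10]{GWSIAM}, and then combines this with Corollary~\ref{cor2:imax}. Your version merely spells out the standard total-variation transfer inequality $\lvert\mu(\accept)-U(\accept)\rvert\le d_{\mathrm{TV}}(\mu,U)$ that the paper leaves implicit.
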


\begin{proof}
When $d^2=o(n)$, the distribution of the output of REG* differs from the uniform by 
$o(1)$ in total variation distance, see~\cite[Section~10]{GWSIAM}. 
Combining this with Corollary~\ref{cor2:imax} completes the proof.
\end{proof}

Both \algB\ and \algC\ use \emph{4-edge-switchings}, which we now define.
To define a 4-edge-switching from the current graph $G$,
choose a sequence of 8 vertices $(v_0,v_1,\ldots, v_7)$ such that $v_0v_1$ is a red
edge in $G$ and $v_2v_3$, $v_4v_5$, $v_6v_7$ are other edges in $G$,
and such that 
\begin{itemize}
\item $v_0v_7$, $v_1v_2$, $v_3v_4$, $v_5v_6$ are \emph{not present} in $G$;
\item none of $v_3v_4,\,\, v_4v_5,\,\, v_5v_6$ is red in $K_n$; 
\item no two of the eight vertices are equal except for possibly $v_2=v_7$;
\item Either none of $v_1v_2$, $v_2v_3$, $v_0v_7$ and $v_6v_7$ is red; \\
or exactly one of $v_1v_2$, $v_2v_3$, $v_0v_7$ and $v_6v_7$ is red;\\
or both $v_1v_2$ and $v_2v_3$ are red, and both $v_0v_7$ and $v_6v_7$ are black;\\
or both $v_1v_2$ and $v_2v_3$ are black, and both $v_0v_7$ and $v_6v_7$ are red.
\end{itemize}
The sequence of vertices $(v_0,\ldots, v_7)$ is said to be
\emph{valid} if it satisfies the above properties.
Given a valid 8-tuple of vertices, the switching operation deletes the
four edges $v_0v_1$, $v_2v_3$, $v_4v_5$, $v_6v_7$,
and replaces them with the edges $v_0v_7$, $v_1v_2$, $v_3v_4$ and $v_5v_6$,
as in Figure~\ref{fig:I-switch}. The resulting graph is denoted by $G'$. 
This operation is called a 4-edge-switching.
We say that the 4-edge-switching is \emph{valid} if it arises from a valid 8-tuple.

Under the switching, the colour of each edge or 
non-edge stays the same, but edges become non-edges
and vice-versa.
In Figure~\ref{fig:I-switch}, 
a solid line indicates an edge of $G$ and a dashed line represents
an edge of $\overline{G}$; that is, a non-edge in $G_t$. 
We label each edge (or non-edge) by the allowed colour,
where `b', `r' and `b/r' denote `black', `red', and `black or red',
respectively.   Edges or non-edges which are definitely red are also 
shown coloured red in the figure.

\begin{figure}[ht!]
\begin{center}
\begin{tikzpicture}[scale=1.4]
\draw [-,very thick,red] (0,2.414) -- (1,2.414);  
\node [above] at (0.5, 2.514) {r};
\draw [-,very thick] (1.707,1.707) -- (1.707,0.707);  
\node [right] at (1.807,1.207) {b/r};
\draw [-,very thick] (-0.707,1.707) -- (-0.707,0.707); 
\node [left] at (-0.807,1.207) {b/r};
\draw [-,very thick] (0,0) -- (1,0);   
\node [below] at (0.5, -0.1) {b};
\draw [-,very thick,dashed] (0,0) -- (-0.707,0.707);  
\node [left] at (-0.3,0.2) {b};
\draw [-,very thick,dashed] (-0.707,1.707) -- (0,2.414); 
\node [left] at (-0.3,2.2) {b/r};
\draw [-,very thick,dashed] (1,2.414) -- (1.707,1.707);  
\node [right] at (1.3,2.2) {b/r};
\draw [-,very thick,dashed] (1.707,0.707) -- (1,0); 
\node [right] at (1.3,0.15) {b};
\draw [fill] (0,0) circle (0.1);
\draw [fill] (1,0) circle (0.1);
\draw [fill] (1.707,0.707) circle (0.1);
\draw [fill] (1.707,1.707) circle (0.1);
\draw [fill] (1,2.414) circle (0.1);
\draw [fill] (0,2.414) circle (0.1);
\draw [fill] (-0.707,1.707) circle (0.1);
\draw [fill] (-0.707,0.707) circle (0.1);
\node [above] at (0.0,2.514) {$v_0$};
\node [above] at (1,2.514) {$v_1$};
\node [right] at (1.807,1.707) {$v_2$};
\node [right] at (1.807,0.707) {$v_3$};
\node [below] at (1,-0.1) {$v_4$};
\node [below] at (0,-0.1) {$v_5$};
\node [left] at (-0.807,0.707) {$v_6$};
\node [left] at (-0.807,1.707) {$v_7$};
\draw [->,line width = 1mm] (3,0.866) -- (4,0.866);
\begin{scope}[shift={(6,0)}]
\draw [-,very thick,dashed,red] (0,2.414) -- (1,2.414); 
\node [above] at (0.5, 2.514) {r};   
\draw [-,very thick,dashed] (1.707,1.707) -- (1.707,0.707); 
\node [right] at (1.807,1.207) {b/r};
\draw [-,very thick,dashed] (-0.707,1.707) -- (-0.707,0.707); 
\node [left] at (-0.807,1.207) {b/r};
\draw [-,very thick,dashed] (0,0) -- (1,0);  
\node [below] at (0.5, -0.1) {b};
\draw [-,very thick] (0,0) -- (-0.707,0.707);  
\node [left] at (-0.3,0.2) {b};
\draw [-,very thick] (-0.707,1.707) -- (0,2.414);   
\node [left] at (-0.3,2.2) {b/r};
\draw [-,very thick] (1,2.414) -- (1.707,1.707);   
\node [right] at (1.3,2.2) {b/r};
\draw [-,very thick] (1.707,0.707) -- (1,0);   
\node [right] at (1.3,0.15) {b};
\draw [fill] (0,0) circle (0.1);
\draw [fill] (1,0) circle (0.1);
\draw [fill] (1.707,0.707) circle (0.1);
\draw [fill] (1.707,1.707) circle (0.1);
\draw [fill] (1,2.414) circle (0.1);
\draw [fill] (0,2.414) circle (0.1);
\draw [fill] (-0.707,1.707) circle (0.1);
\draw [fill] (-0.707,0.707) circle (0.1);
\node [above] at (0.0,2.514) {$v_0$};
\node [above] at (1,2.514) {$v_1$};
\node [right] at (1.807,1.707) {$v_2$};
\node [right] at (1.807,0.707) {$v_3$};
\node [below] at (1,-0.1) {$v_4$};
\node [below] at (0,-0.1) {$v_5$};
\node [left] at (-0.807,0.707) {$v_6$};
\node [left] at (-0.807,1.707) {$v_7$};
\end{scope}
\end{tikzpicture}
\caption{A 4-edge-switching}
\label{fig:I-switch}
\end{center}
\end{figure}
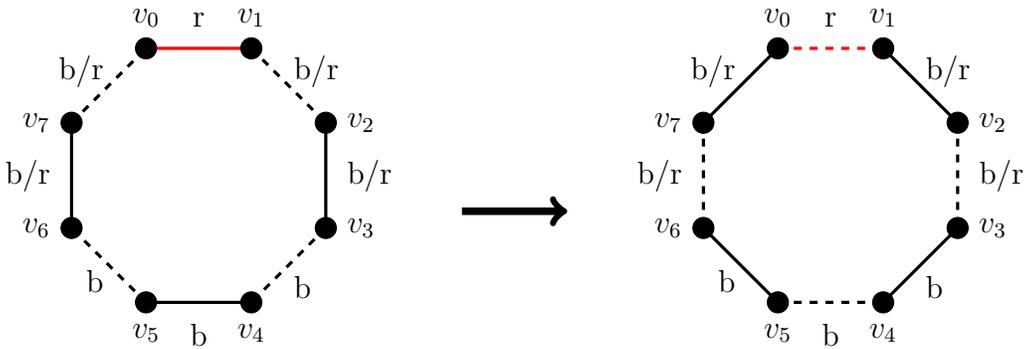


The structure of \algC\ is similar to that of \algA, except that
there is no rejection after the first step.

First, \algC\ uses REG* to repeatedly generate a random $d$-regular graph on 
the vertex set $\{ 1,2,\ldots, n \}$ until the resulting graph belongs to
$\accept$. Next, \algC\ repeatedly applies random
4-edge switchings from the current graph,
until a graph without red edges is produced. 
Finally, \algC\ outputs this graph, which is a $d$-factor of $H_n$. 

To choose a uniformly random 4-edge switching from a current graph
$G=G_t$, we can uniformly at random choose a red edge and label its end 
vertices by $v_0$ and $v_1$. Then uniformly choose three edges of $G$
(repetition allowed) and label their endvertices. 
If the resulting 8-tuple is valid then perform the corresponding
4-edge-switching to produce
the new graph $G'=G_{t+1}$. 
Otherwise, repeat until a valid 4-edge-switching is obtained. 

\bigskip

The proof of the following lemma is given in Section~\ref{sec:runtime}.

\begin{restatable}{lemma}{Cdist}\label{lem:algC-distribution}
Under the conditions of Theorem~\ref{thm:C}, 
the output of \algC\ is a random $d$-factor of $H_n$ whose distribution differs
from the uniform distribution by $o(1)$ in total variation distance.
\end{restatable}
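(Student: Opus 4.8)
The plan is to analyse \algC\ directly, by tracking for each graph $G$ the quantity $q(G)$ equal to the expected number of times the trajectory of \algC\ visits $G$. Since the algorithm halts the instant it reaches $\strata_0$, for $G\in\strata_0$ this $q(G)$ is exactly the probability that $G$ is the output, and proving Lemma~\ref{lem:algC-distribution} amounts to showing that $q$ is nearly constant on $\strata_0$, with the relative deviations summing to $o(1)$. First I would dispose of the initial phase. By~\cite[Section~10]{GWSIAM} the output of REG* is within $o(1)$ of the uniform $d$-regular distribution in total variation (as $d^2=o(n)$), and by Corollary~\ref{cor3:imax} it lands in $\accept$ with probability at least $1/4+o(1)$; since conditioning on an event of probability bounded below by a constant inflates total variation by at most a constant factor, the initial graph of \algC\ is within $o(1)$ of uniform on $\accept$. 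Coupling \algC\ with the same switching chain launched from the exactly uniform law on $\accept$, the runs agree with probability $1-o(1)$, so it is enough to bound the output distance for the chain started from uniform on $\accept$.

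The core of the argument needs counting estimates for $4$-edge switchings that play the role of Lemma~\ref{lem:fb}. For $G\in\strata_i$ let $f(G)$ be the number of valid $4$-edge switchings that apply to $G$; a switching lowers the red-edge count by $\delta\in\{0,1,2\}$, and for $G^*\in\strata_j$ let $b^{(\delta)}(G^*)$ count the valid inverse switchings of type $\delta$ that reach $G^*$. By the familiar recipe (choose a red edge and three further edges, then subtract the configurations violating the distinctness and colour conditions of Figure~\ref{fig:I-switch}) I would show that $f$ and each $b^{(\delta)}$ are constant on their stratum up to a relative error of order $(d+\Delta)/n$. Writing $s^{(\delta)}(G,G^*)$ for the number of type-$\delta$ switchings from $G$ to $G^*$, we have $\sum_{G}s^{(\delta)}(G,G^*)=b^{(\delta)}(G^*)$ and the transition law $\pr(G\to G^*)=\sum_{\delta}s^{(\delta)}(G,G^*)/f(G)$.

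Substituting into the visit-count recursion
\[
q(G^*)=\nu(G^*)+\sum_{\delta\in\{0,1,2\}}\ \sum_{G\in\strata_{j+\delta}} q(G)\,\frac{s^{(\delta)}(G,G^*)}{f(G)},
\]
where $\nu$ is the uniform initial law and $G^*\in\strata_j$, I would argue by downward induction on $j$ that $q$ is constant on each stratum up to a controlled error. Replacing $f(G)$ by its stratum value collapses the inner sum to $b^{(\delta)}(G^*)$ times the stratum value of $q$, plus a fluctuation term $\sum_G\epsilon(G)\,s^{(\delta)}(G,G^*)$ recording how the previous-stratum error $\epsilon$ correlates with the switchings entering $G^*$. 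The part of $\epsilon$ that is common to a whole stratum merely rescales $q$ stratum-by-stratum and is invisible to the final uniformity on $\strata_0$; only the mean-zero fluctuating part propagates, and the single-reduction ($\delta=1$) switchings furnish the dominant contribution, with $\delta=0$ and $\delta=2$ giving lower-order corrections.

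The main obstacle is to control how these fluctuations accumulate. Each step injects a fresh fluctuation of relative size $O((d+\Delta)/n)$, and there are $\imax=\tfrac{2}{3}d\Delta$ strata; if one simply let the errors add, the bound would be $O((d+\Delta)^3/n)$, which is not $o(1)$ over the whole range $d^2+\Delta^2=o(n)$. The real work is therefore to show that the backward-averaging operator $\epsilon\mapsto\big(\sum_G\epsilon(G)\,s^{(\delta)}(G,G^*)\big)/b^{(\delta)}(G^*)$ does not amplify the mean-zero part, so that the fluctuations accumulate sub-linearly and the total stays $O((d+\Delta)^2/n)=o(1)$, matching the hypothesis exactly. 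Carrying the $\delta=0$ within-stratum moves (which make each stratum's equation implicit) and the stratum-skipping $\delta=2$ moves through the same induction, and pinning down the $4$-edge switching counts to the precision this cancellation demands, is where essentially all of the effort lies.
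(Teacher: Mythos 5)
Your reduction of the lemma to the visit-count function $q$ on $\strata_0$, and your handling of the initial phase via REG* and Corollary~\ref{cor3:imax}, are both sound. But the proposal has a genuine gap at its central step, and you have in effect flagged it yourself: after observing that naive accumulation of the per-step relative fluctuations $O((d+\Delta)/n)$ over the $\Theta(d\Delta)$ strata gives $O((d+\Delta)^3/n)$, which is \emph{not} $o(1)$ under $d^2+\Delta^2=o(n)$, you simply assert that the backward-averaging operator does not amplify the mean-zero part of the error, so that the total stays $O((d+\Delta)^2/n)$. Nothing in the proposal substantiates this, and there is concrete reason to doubt it comes cheaply. The fluctuations in your $b^{(\delta)}$ are driven by persistent structural features of the graph --- for instance the number of pairs of red edges joined by a red edge or red non-edge, which by Lemma~\ref{lem:average} is $O(i^2\Delta/n)$ \emph{on average} over $\strata_i$ but can be of maximal order for particular graphs (Remark~\ref{bad-example}: take $d=\Delta$ with the red edges forming a red $d$-regular subgraph; see also the explicit discussion in Section~\ref{ss:boostB1} of counts that can even vanish). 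Such features are destroyed only one red edge at a time by the dominant switchings, so the per-step fluctuations seen along a single trajectory are strongly positively correlated across consecutive steps; mean-zero-ness within a stratum does not by itself rule out essentially linear accumulation along the paths that carry the visit measure. Establishing the cancellation you need is precisely the difficulty that the entire apparatus of Section~\ref{sec:B} --- the booster types \textIIapm, \textIIbpm, \textIIcpm, \textIIIpm, and the t-, f-, pre-b- and b-rejections with the recursively determined $\rho_\tau(i)$ --- was constructed to circumvent, so your induction would in substance have to reprove that material.

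The paper's own proof is entirely different and far shorter: it couples \algC\ with the exactly uniform sampler \algB. REG and REG* can be coupled to agree with probability $1-o(1)$; and a run of \algB\ in which only types I and \textIIIpm\ are ever chosen and no rejection occurs is step-for-step identical to \algC, because Type~\textIIIpm\ switchings move no edges and may be skipped once f-rejections are ignored. By Lemma~\ref{lem:rejections} rejections occur with probability $o(1)$, and by~(\ref{cond2}) and~(\ref{epsilon}) any other type is ever chosen with probability $O(\epsilon\,\imax)=o(1)$; hence the two outputs coincide with probability $1-o(1)$, and uniformity of \algB\ (Lemma~\ref{lem:uniform}) yields the $o(1)$ total variation bound. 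As written, your direct argument is a program rather than a proof: its key step --- sub-linear accumulation of the mean-zero fluctuations --- is named but not established.
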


Using this lemma, we can prove Theorem~\ref{thm:C}, restated here for convenience.

\thmC*
\begin{proof}
The runtime of REG* is $O(dn)$ and by Corollary~\ref{cor3:imax},
a constant number of attempts will be sufficient to generate a random
$d$-regular graph with at most $\imax$ red edges
in expectation.  Now 
a given graph $G$, consider choosing $v_0v_1$ to
be a uniformly random red edge of $G$ and choosing each of 
$v_2v_3$, $v_4v_5$, $v_6v_7$ to be a uniformly random edge of
$G$ (with repetition allowed). It is easy to see that with high
probability, this random choice of edges $v_0v_1$, $v_2v_3$, $v_4v_5$,
$v_6v_7$ defines a valid 4-edge switching which reduces the number of
red edges by exactly one.
(The argument is very similar to the proof of Lemma~\ref{lem:iterations}.)
Hence, the cost of time in performing one 4-edge 
switching is $O(1)$, and \algC\ consists of performing $O(\imax)$ 
switching steps in expectation and with high probability. 
Since $\imax = O(d\Delta)$, it follows that the runtime of \algC\ is
$O(dn+\imax)=O(dn)$.   This completes the proof, by Lemma~\ref{lem:algC-distribution}.
\end{proof}

\section{The exactly uniform sampler: \algB}\label{sec:B}

In this section, our aim is to define and analyse an algorithm
for uniform generation of $d$-factors of $H_n$, which is efficient
for larger values of $d$ and $\Delta$ than \algA.
Specifically, we assume that $d^2+\Delta^2 = o(n)$.
As in Section~\ref{sec:approx}, we assume that $H_n$ is $(n-\Delta-1)$-regular
and define $\imax$ and $\accept$ as in (\ref{def-imaxb}), (\ref{def-A0}).

The analysis of \algA\ given in Section~\ref{sec:A} shows that
the probability of an \mbox{f-rejection} or a b-rejection depends on the gap between 
the upper and lower bounds of $f(G)$ and $b(G)$.  To obtain an algorithm
which is still efficient for larger values of $d$ and $\Delta$, we must
reduce the variation of $f(G)$ and $b(G)$ among
$G\in {\strata}_i$, for some family of switchings.  
We use several techniques to reduce this variation: 

\begin{itemize}
\item We use 4-edge-switchings instead of 3-edge switchings.
\item We will perform more careful counting than the analysis of
Lemma~\ref{lem:fb}.
\item We will occasionally allow switchings which create new red edges.
\item We will introduce other types of switchings to ``boost'' the probability
of graphs which are otherwise not created sufficiently often. 
These types of switchings are called {\em boosters}.
\end{itemize}
The last two of these techniques follow the approach of~\cite{GWSIAM}. 
In particular, every switching introduced will have a \emph{type} 
and a \emph{class}.  The number of switchings of type $\tau$ that
can be performed on a given graph $G$ is denoted by $f_\tau(G)$,
and the number of switchings of class $\alpha$ that can be applied to
other graphs to produce $G$ is denoted by $b_\alpha(G)$.
We will also need parameters $\UB_\tau(i)$ and $\LB_\alpha(i)$ which
satisfy
\[ \UB_\tau(i)\geq \max_{G\in\strata_i} f_\tau(G),\qquad
  \LB_\alpha(i)\leq \min_{G\in\strata_i} b_\alpha(G).
\]
Further details of the structure of algorithm \algB\ will be explained
in Section~\ref{sec:alg}.

\subsection{Type I switchings}\label{s:forward}

\algB\ will mainly use the 4-edge-switching shown in Figure~\ref{fig:I-switch}, 
which replaces four edges by four new edges. We will call this the
\emph{Type I} switching, as we will define other types of switchings for use
in \algB\ later.

Suppose that a Type I switching transforms a graph $G$ into a graph
$G'\in\strata_i$.  
Then the initial graph $G$ can be in different strata, depending 
on the colour of $v_1v_2$, $v_2v_3$, $v_6v_7$ and $v_0v_7$. If these edges are 
all black then we say that the Type I switching is in \emph{Class A}.
In this case, $G$ has exactly one more red edge than $G'$.  
Other Type I switchings are categorised into 
different classes, as shown in Table~\ref{tab:TypeI}. 

\begin{table}[ht!]
\begin{center}
\renewcommand{\arraystretch}{1.2}
\begin{tabular}{|c|c|c|}
\hline
class &  action & the switching  \\
\hline
A & $\mathcal{S}_{i+1} \rightarrow \mathcal{S}_i$ &
\begin{tikzpicture}[scale=0.7]
\draw [-,very thick,red] (0,2.414) -- (1,2.414); 
\node [above] at (0.5, 2.514) {r};
\draw [-,very thick] (1.707,1.707) -- (1.707,0.707);
\draw [-,very thick] (-0.707,1.707) -- (-0.707,0.707);
\draw [-,very thick] (0,0) -- (1,0);
\draw [-,very thick,dashed] (0,0) -- (-0.707,0.707);
\draw [-,very thick,dashed] (-0.707,1.707) -- (0,2.414);
\draw [-,very thick,dashed] (1,2.414) -- (1.707,1.707);
\draw [-,very thick,dashed] (1.707,0.707) -- (1,0);
\draw [-,very thick,dashed] (1.707,0.707) -- (1,0);
\draw [fill] (0,0) circle (0.1); \draw [fill] (1,0) circle (0.1);
\draw [fill] (1.707,0.707) circle (0.1); \draw [fill] (1.707,1.707) circle (0.1);
\draw [fill] (1,2.414) circle (0.1); \draw [fill] (0,2.414) circle (0.1);
\draw [fill] (-0.707,1.707) circle (0.1); \draw [fill] (-0.707,0.707) circle (0.1);
\draw [->,line width = 1mm] (3,0.866) -- (4,0.866);
\begin{scope}[shift={(6,0)}]
\draw [-,very thick,dashed,red] (0,2.414) -- (1,2.414); 
\node [above] at (0.5, 2.514) {r};
\draw [-,very thick,dashed] (1.707,1.707) -- (1.707,0.707);
\draw [-,very thick,dashed] (-0.707,1.707) -- (-0.707,0.707);
\draw [-,very thick,dashed] (0,0) -- (1,0);
\draw [-,very thick] (0,0) -- (-0.707,0.707);
\draw [-,very thick] (-0.707,1.707) -- (0,2.414);
\draw [-,very thick] (1,2.414) -- (1.707,1.707);
\draw [-,very thick] (1.707,0.707) -- (1,0);
\draw [fill] (0,0) circle (0.1); \draw [fill] (1,0) circle (0.1);
\draw [fill] (1.707,0.707) circle (0.1); \draw [fill] (1.707,1.707) circle (0.1);
\draw [fill] (1,2.414) circle (0.1); \draw [fill] (0,2.414) circle (0.1);
\draw [fill] (-0.707,1.707) circle (0.1); \draw [fill] (-0.707,0.707) circle (0.1);
\end{scope}
\end{tikzpicture}
\\
\hline
\textBipm & $\mathcal{S}_{i} \rightarrow \mathcal{S}_i$ &
\begin{tikzpicture}[scale=0.7]
\draw [-,very thick,red] (0,2.414) -- (1,2.414); 
\node [above] at (0.5, 2.514) {r};
\draw [-,very thick] (1.707,1.707) -- (1.707,0.707);
\node [right] at (1.3,2.2) {r};  
\draw [-,very thick] (-0.707,1.707) -- (-0.707,0.707);
\draw [-,very thick] (0,0) -- (1,0);
\draw [-,very thick,dashed] (0,0) -- (-0.707,0.707);
\draw [-,very thick,dashed] (-0.707,1.707) -- (0,2.414);
\draw [-,very thick,dashed,red] (1,2.414) -- (1.707,1.707); 
\draw [-,very thick,dashed] (1.707,0.707) -- (1,0);
\draw [-,very thick,dashed] (1.707,0.707) -- (1,0);
\draw [fill] (0,0) circle (0.1); \draw [fill] (1,0) circle (0.1);
\draw [fill] (1.707,0.707) circle (0.1); \draw [fill] (1.707,1.707) circle (0.1);
\draw [fill] (1,2.414) circle (0.1); \draw [fill] (0,2.414) circle (0.1);
\draw [fill] (-0.707,1.707) circle (0.1); \draw [fill] (-0.707,0.707) circle (0.1);
\draw [->,line width = 1mm] (3,0.866) -- (4,0.866);
\begin{scope}[shift={(6,0)}]
\draw [-,very thick,dashed,red] (0,2.414) -- (1,2.414); 
\node [above] at (0.5, 2.514) {r};
\draw [-,very thick,dashed] (1.707,1.707) -- (1.707,0.707); 
\node [right] at (1.3,2.2) {r};  
\draw [-,very thick,dashed] (-0.707,1.707) -- (-0.707,0.707);
\draw [-,very thick,dashed] (0,0) -- (1,0);
\draw [-,very thick] (0,0) -- (-0.707,0.707);
\draw [-,very thick] (-0.707,1.707) -- (0,2.414);
\draw [-,very thick,red] (1,2.414) -- (1.707,1.707); 
\draw [-,very thick] (1.707,0.707) -- (1,0);
\draw [fill] (0,0) circle (0.1); \draw [fill] (1,0) circle (0.1);
\draw [fill] (1.707,0.707) circle (0.1); \draw [fill] (1.707,1.707) circle (0.1);
\draw [fill] (1,2.414) circle (0.1); \draw [fill] (0,2.414) circle (0.1);
\draw [fill] (-0.707,1.707) circle (0.1); \draw [fill] (-0.707,0.707) circle (0.1);
\end{scope}
\end{tikzpicture}
\\
\hline
\textBiipm & $\mathcal{S}_{i+2} \rightarrow \mathcal{S}_i$ &
\begin{tikzpicture}[scale=0.7]
\draw [-,very thick,red] (0,2.414) -- (1,2.414); 
\node [above] at (0.5, 2.514) {r};
\draw [-,very thick,red] (1.707,1.707) -- (1.707,0.707); 
\draw [-,very thick] (-0.707,1.707) -- (-0.707,0.707);
\draw [-,very thick] (0,0) -- (1,0);
\node [right] at (1.807,1.207) {r}; 
\draw [-,very thick,dashed] (0,0) -- (-0.707,0.707);
\draw [-,very thick,dashed] (-0.707,1.707) -- (0,2.414);
\draw [-,very thick,dashed] (1,2.414) -- (1.707,1.707);
\draw [-,very thick,dashed] (1.707,0.707) -- (1,0);
\draw [-,very thick,dashed] (1.707,0.707) -- (1,0);
\draw [fill] (0,0) circle (0.1); \draw [fill] (1,0) circle (0.1);
\draw [fill] (1.707,0.707) circle (0.1); \draw [fill] (1.707,1.707) circle (0.1);
\draw [fill] (1,2.414) circle (0.1); \draw [fill] (0,2.414) circle (0.1);
\draw [fill] (-0.707,1.707) circle (0.1); \draw [fill] (-0.707,0.707) circle (0.1);
\draw [->,line width = 1mm] (3,0.866) -- (4,0.866);
\begin{scope}[shift={(6,0)}]
\node [right] at (1.807,1.207) {r}; 
\draw [-,very thick,dashed,red] (0,2.414) -- (1,2.414); 
\node [above] at (0.5, 2.514) {r};
\draw [-,very thick,dashed,red] (1.707,1.707) -- (1.707,0.707); 
\draw [-,very thick,dashed] (-0.707,1.707) -- (-0.707,0.707);
\draw [-,very thick,dashed] (0,0) -- (1,0);
\draw [-,very thick] (0,0) -- (-0.707,0.707);
\draw [-,very thick] (-0.707,1.707) -- (0,2.414);
\draw [-,very thick] (1,2.414) -- (1.707,1.707);
\draw [-,very thick] (1.707,0.707) -- (1,0);
\draw [fill] (0,0) circle (0.1); \draw [fill] (1,0) circle (0.1);
\draw [fill] (1.707,0.707) circle (0.1); \draw [fill] (1.707,1.707) circle (0.1);
\draw [fill] (1,2.414) circle (0.1); \draw [fill] (0,2.414) circle (0.1);
\draw [fill] (-0.707,1.707) circle (0.1); \draw [fill] (-0.707,0.707) circle (0.1);
\end{scope}
\end{tikzpicture}\\
\hline
\textCpm  & $\mathcal{S}_{i+1} \rightarrow \mathcal{S}_i$ &
\begin{tikzpicture}[scale=0.7]
\node [right] at (1.807,1.207) {r}; 
\node [right] at (1.3,2.2) {r};  
\draw [-,very thick,red] (0,2.414) -- (1,2.414); 
\node [above] at (0.5, 2.514) {r};
\draw [-,very thick,red] (1.707,1.707) -- (1.707,0.707); 
\draw [-,very thick] (-0.707,1.707) -- (-0.707,0.707);
\draw [-,very thick] (0,0) -- (1,0);
\draw [-,very thick,dashed] (0,0) -- (-0.707,0.707);
\draw [-,very thick,dashed] (-0.707,1.707) -- (0,2.414);
\draw [-,very thick,dashed,red] (1,2.414) -- (1.707,1.707); 
\draw [-,very thick,dashed] (1.707,0.707) -- (1,0);
\draw [-,very thick,dashed] (1.707,0.707) -- (1,0);
\draw [fill] (0,0) circle (0.1); \draw [fill] (1,0) circle (0.1);
\draw [fill] (1.707,0.707) circle (0.1); \draw [fill] (1.707,1.707) circle (0.1);
\draw [fill] (1,2.414) circle (0.1); \draw [fill] (0,2.414) circle (0.1);
\draw [fill] (-0.707,1.707) circle (0.1); \draw [fill] (-0.707,0.707) circle (0.1);
\draw [->,line width = 1mm] (3,0.866) -- (4,0.866);
\begin{scope}[shift={(6,0)}]
\draw [-,very thick,dashed,red] (0,2.414) -- (1,2.414); 
\node [above] at (0.5, 2.514) {r};
\node [right] at (1.807,1.207) {r}; 
\node [right] at (1.3,2.2) {r};  
\draw [-,very thick,dashed,red] (1.707,1.707) -- (1.707,0.707); 
\draw [-,very thick,dashed] (-0.707,1.707) -- (-0.707,0.707);
\draw [-,very thick,dashed] (0,0) -- (1,0);
\draw [-,very thick] (0,0) -- (-0.707,0.707);
\draw [-,very thick] (-0.707,1.707) -- (0,2.414);
\draw [-,very thick,red] (1,2.414) -- (1.707,1.707); 
\draw [-,very thick] (1.707,0.707) -- (1,0);
\draw [fill] (0,0) circle (0.1); \draw [fill] (1,0) circle (0.1);
\draw [fill] (1.707,0.707) circle (0.1); \draw [fill] (1.707,1.707) circle (0.1);
\draw [fill] (1,2.414) circle (0.1); \draw [fill] (0,2.414) circle (0.1);
\draw [fill] (-0.707,1.707) circle (0.1); \draw [fill] (-0.707,0.707) circle (0.1);
\end{scope}
\end{tikzpicture}\\
\hline
\end{tabular}
\caption{The different classes of Type I switchings}
\label{tab:TypeI}
\end{center}
\end{table}

Each row of Table~\ref{tab:TypeI}, other than the first row, defines two
new classes of switching, depending on how the vertices are
labelled.  For example, the second row defines Classes \Bi\ and \textBvii,
which we refer to collectively as \textBipm.
Every class with a name ending ``+'' arises from using the vertex
labelling shown on the left of Figure~\ref{fig:vertex-labels},
while those
classes ending in ``$-$'' arise from using the vertex labelling
shown on the right of Figure~\ref{fig:vertex-labels}.
For example, if $v_1v_2$ (respectively, $v_0v_7$) is red but not present in 
$G$, and all the other edges and non-edges involved in the switching, 
except for $v_0v_1$, are black, then this switching is in Class \Bi\ 
(respectively, \textBvii) and both $G$ and $G'$ belong to $\strata_i$. 

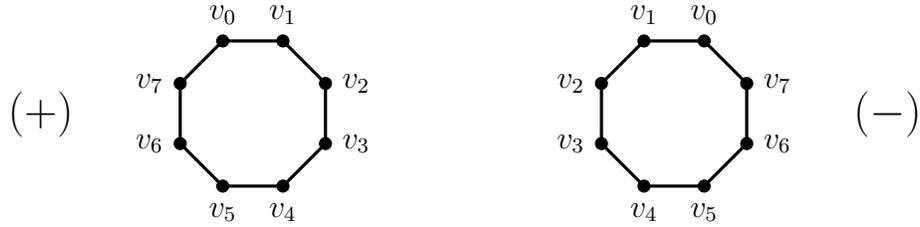
\begin{figure}[ht!]
\begin{center}
\begin{tikzpicture}[scale=0.8]
\draw [-,very thick] (0,2.414) -- (1,2.414);  
\draw [-,very thick] (1.707,1.707) -- (1.707,0.707);  
\draw [-,very thick] (-0.707,1.707) -- (-0.707,0.707); 
\draw [-,very thick] (0,0) -- (1,0);   
\draw [-,very thick] (0,0) -- (-0.707,0.707);  
\draw [-,very thick] (-0.707,1.707) -- (0,2.414); 
\draw [-,very thick] (1,2.414) -- (1.707,1.707);  
\draw [-,very thick] (1.707,0.707) -- (1,0); 
\draw [fill] (0,0) circle (0.1);
\draw [fill] (1,0) circle (0.1);
\draw [fill] (1.707,0.707) circle (0.1);
\draw [fill] (1.707,1.707) circle (0.1);
\draw [fill] (1,2.414) circle (0.1);
\draw [fill] (0,2.414) circle (0.1);
\draw [fill] (-0.707,1.707) circle (0.1);
\draw [fill] (-0.707,0.707) circle (0.1);
\node [above] at (0.0,2.514) {$v_0$};
\node [above] at (1,2.514) {$v_1$};
\node [right] at (1.807,1.707) {$v_2$};
\node [right] at (1.807,0.707) {$v_3$};
\node [below] at (1,-0.1) {$v_4$};
\node [below] at (0,-0.1) {$v_5$};
\node [left] at (-0.807,0.707) {$v_6$};
\node [left] at (-0.807,1.707) {$v_7$};
\node [left] at (-2.3,1.2) {\Large{$(+)$}};
\begin{scope}[shift={(7,0)}]
\draw [-,very thick] (0,2.414) -- (1,2.414);  
\draw [-,very thick] (1.707,1.707) -- (1.707,0.707);  
\draw [-,very thick] (-0.707,1.707) -- (-0.707,0.707); 
\draw [-,very thick] (0,0) -- (1,0);   
\draw [-,very thick] (0,0) -- (-0.707,0.707);  
\draw [-,very thick] (-0.707,1.707) -- (0,2.414); 
\draw [-,very thick] (1,2.414) -- (1.707,1.707);  
\draw [-,very thick] (1.707,0.707) -- (1,0); 
\draw [fill] (0,0) circle (0.1);
\draw [fill] (1,0) circle (0.1);
\draw [fill] (1.707,0.707) circle (0.1);
\draw [fill] (1.707,1.707) circle (0.1);
\draw [fill] (1,2.414) circle (0.1);
\draw [fill] (0,2.414) circle (0.1);
\draw [fill] (-0.707,1.707) circle (0.1);
\draw [fill] (-0.707,0.707) circle (0.1);
\node [above] at (0.0,2.514) {$v_1$};
\node [above] at (1,2.514) {$v_0$};
\node [right] at (1.807,1.707) {$v_7$};
\node [right] at (1.807,0.707) {$v_6$};
\node [below] at (1,-0.1) {$v_5$};
\node [below] at (0,-0.1) {$v_4$};
\node [left] at (-0.807,0.707) {$v_3$};
\node [left] at (-0.807,1.707) {$v_2$};
\node [right] at (3.3,1.2) {\Large{$(-)$}};
\end{scope}
\end{tikzpicture}
\caption{The vertex labellings for ``$+$'' and ``$-$'' classes,
respectively}
\label{fig:vertex-labels}
\end{center}
\end{figure}

\bigskip
Next we bound the number of Type I switchings which can be
performed in an arbitrary $G\in\strata_i$.
Define
\begin{align}
& \UB_{\text{I}}(i) \nonumber\\
&= 2i(dn)^3\left(1 + 28\left(\frac{(\Delta+d)^2}{n^2} + \frac{1}{n}\right)\right)  - 8i(d-1)^2d^2n^2 - 4i\Delta d^3 n^2 - 4i^2(dn)^2. \label{UBI}
\end{align}
The proof of the following lemma is similar to that of Lemma~\ref{lem:fb}.

\begin{lemma}\label{lem:ff}  Suppose that $i\in \{1,\ldots, i_1\}$.
Then for any $G\in{\strata}_i$,
\begin{eqnarray}
f_{\mathrm{I}}(G)&\le& \UB_{\text{I}}(i), \label{never-referred-to}\\
f_{\mathrm{I}}(G)&=& \UB_{\text{I}}(i)(1 + O((d^2+\Delta^2)/n^2+1/n)). \label{eq:lowerI}
\end{eqnarray}
\end{lemma}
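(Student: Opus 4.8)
The plan is to refine the counting argument of Lemma~\ref{lem:fb}, now applied to the eight-vertex Type~I configuration. Choosing an oriented red edge $(v_0,v_1)$ (there are $2i$ of these, since $G\in\strata_i$) together with three oriented edges $(v_2,v_3),(v_4,v_5),(v_6,v_7)$ of $G$ (each available in $dn$ ways, as $G$ has $dn/2$ edges), the completely unconstrained count is $2i(dn)^3$, the leading term of $\UB_{\text{I}}(i)$. Every further validity condition only \emph{removes} tuples, so $f_{\mathrm{I}}(G)=2i(dn)^3-N(G)$, where $N(G)$ is the number of these tuples that violate at least one condition. The lemma then reduces to showing that
\[
 N(G)=8i(d-1)^2d^2n^2+4i\Delta d^3n^2+4i^2(dn)^2+O\!\left(2i(dn)^3\Big(\tfrac{(\Delta+d)^2}{n^2}+\tfrac1n\Big)\right),
\]
that is, that the three explicit subtractions in~(\ref{UBI}) capture $N(G)$ up to the slack term. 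First I would pin down these three first-order families of forbidden tuples and compute them to leading order.

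The three dominant families are as follows. (i) One of the four \emph{new} edges $v_0v_7,v_1v_2,v_3v_4,v_5v_6$ already belongs to $G$: each such event forces two chosen vertices to be adjacent, replacing a factor $dn$ by $\le(d-1)^2$, so a short count gives $2i(d-1)^2d^2n^2$ per new edge and $8i(d-1)^2d^2n^2$ in all. (ii) One of $v_3v_4,v_5v_6$, which the definition forces to be black, is instead a red non-edge (an edge of $\overline{H_n}$); since every vertex has at most $\Delta$ red non-neighbours, each such event replaces a factor $dn$ by $\le\Delta d$, contributing $2i\Delta d^3n^2$ apiece and $4i\Delta d^3n^2$ together. (iii) The deleted edge $v_4v_5$, also required to be black, is red; as there are $2i$ oriented red edges available for the slot $(v_4,v_5)$, this is counted $4i^2(dn)^2$ times. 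These three counts are exactly the subtractions in~(\ref{UBI}).

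The remaining work is to confirm that everything ignored or double-counted lives inside the slack $28\cdot 2i(dn)^3\big((\Delta+d)^2/n^2+1/n\big)$. There are three sources. Vertex coincidences (all eight vertices distinct apart from the permitted $v_2=v_7$): each forced equality among the $\binom{8}{2}=28$ pairs removes a free vertex and costs a relative factor $O(1/n)$, which is where the constant $28$ and the $1/n$ term come from. Inclusion--exclusion overlaps, where a tuple violates two of the families (i)--(iii) at once, impose two neighbour/colour constraints and so are smaller by a further factor $O((d+\Delta)/n)$, hence of relative order $O((\Delta+d)^2/n^2)$. Finally, the delicate four-way colour condition on $v_1v_2,v_2v_3,v_0v_7,v_6v_7$ that separates classes A, \textBipm, \textBiipm, \textCpm\ only excludes colour patterns with at least two red entries (other than the two special paired patterns); each such exclusion imposes two red constraints and is again of relative order $O((\Delta+d)^2/n^2)$. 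Bounding each source by the corresponding piece of the slack yields the displayed estimate for $N(G)$, from which the upper bound $f_{\mathrm{I}}(G)\le\UB_{\text{I}}(i)$ is immediate; and since every subtraction is of relative order $O((d+\Delta)/n)=o(1)$ under $d^2+\Delta^2=o(n)$, we have $\UB_{\text{I}}(i)=\Theta(2i(dn)^3)$, so $0\le\UB_{\text{I}}(i)-f_{\mathrm{I}}(G)\le 2\cdot\text{slack}$ gives~(\ref{eq:lowerI}).

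I expect the main obstacle to be precisely the colour bookkeeping in the error analysis. The key structural point — and the reason the three explicit subtractions are \emph{complete} at first order — is that the allowed colour patterns are designed so that imposing a colour on $v_0v_7,v_1v_2,v_2v_3,v_6v_7$ never reduces the count at first order: ``all black'' and ``exactly one red'' are permitted, and only the two-or-more-red patterns are forbidden. Verifying this, and checking that no excluded pattern (nor any pairwise overlap of families (i)--(iii)) secretly contributes at order $(\Delta+d)/n$ rather than $(\Delta+d)^2/n^2$, is the delicate step; getting it wrong would shift one of the three leading subtraction terms.
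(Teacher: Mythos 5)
Your proposal is correct and follows essentially the same route as the paper's proof of Lemma~\ref{lem:ff}: you start from the unconstrained count $2i(dn)^3$, subtract the same three leading families (a new edge already present in $G$; $v_3v_4$ or $v_5v_6$ a red non-edge; $v_4v_5$ a red edge) with exactly the paper's values $8i(d-1)^2d^2n^2$, $4i\Delta d^3n^2$ and $4i^2(dn)^2$, and absorb vertex coincidences, pairwise overlaps, and the excluded two-red colour patterns on $v_1v_2,v_2v_3,v_0v_7,v_6v_7$ into the $O((d^2+\Delta^2)/n^2+1/n)$ slack, which is precisely the paper's inclusion--exclusion bookkeeping. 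The only cosmetic discrepancy is your attribution of the constant $28$ to the $\binom{8}{2}$ possible vertex coincidences, whereas in the paper it is simply a generous constant chosen to absorb all the accumulated error terms; this does not affect correctness.
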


\begin{proof}
Recall that $f_{\text{I}}(G)$ is the number of ways that a Type~I switching can be applied
to a given graph $G\in\mathcal{S}_i$. There are 
\[ 2i(dn-O(1))^3 \leq 2i(dn)^3 \] 
ways to choose the 8-tuple of vertices $(v_0,\ldots, v_7)$
so that $v_0v_1$ is a red edge in $G$, and $v_2v_3$,
$v_4v_5$, $v_6v_7$ are all edges in $G$, and these four edges are distinct.
From this we will subtract the following terms:
\begin{itemize}
\item Those with an unwanted vertex
coincidence, of which there are $O(id^3 n^2)$.
We will ignore these cases for the upper bound (\ref{never-referred-to}), 
but include them in the error term in (\ref{eq:lowerI}).
\item Those in which one of the dashed edges $v_1v_2$, $v_3v_4$, $v_5v_6$, $v_0v_7$
is present, either black or red, in $G$.  Using inclusion-exclusion, there are at least
\[ 4 \times 2i(d-1)^2(dn-6)(dn-8) - X\]
of these, where $X$ accounts for choices where at least two of them are present. 
It is easy to see that $X\leq \binom{4}{2} \cdot 2id^5 n= 12id^5n$.
So this expression is bounded below by
\[ 8i(d-1)^2 d^2 n^2 - 2i(dn)^3\, \left(56/n^2  + 6 d^2/n^2 \right).\]
(Here, and below, we treat some negligible terms as errors relative to the main term.)
\item Those for which at least one of $v_3v_4$, $v_5v_6$ or $v_4v_5$ is red. 
Suppose first that $v_3v_4$ is a red non-edge.  There are 
$2i$ ways to choose 
$v_0v_1$, then $\Delta n-2i$ ways to choose $v_3v_4$ to be red and $d^2$ ways to choose $v_2$ and $v_5$,
then at least $dn-6$ ways to choose $v_6v_7$ to be distinct from all chosen 
edges.   The number of choices where $v_5v_6$ is a red non-edge
is identical,
and the number of choices such that both $v_3v_4$ and $v_5v_6$ 
are red non-edges is at most $2id^3\Delta^2 n$.
There are at most $16id^4\Delta n$ choices of 8-tuple such that 
one of $v_3v_4$, $v_5v_6$ is a red non-edge and one of the 
dashed edges is present in $G$.
This gives the expression 
\begin{align*}
 & 4i(\Delta n - 2i)d^2(dn-6)  - 2id^3\Delta^2 n - 16id^4\Delta n\\
  &\geq  4i\Delta d^3 n^2 - 8i^2 d^3 n - 24 i \Delta d^2 n - 2id^3\Delta^2 n - 16id^4\Delta n\\
  &= 4i\Delta d^3 n^2 - 2i(dn)^3\,\left( 4i/n^2 + 12 \Delta/(dn^2)  + \Delta^2/n^2 + 8d\Delta/n^2
    \right)\\
  &\geq 4i\Delta d^3 n^2 - 2i(dn)^3\,\left( 11 d\Delta/n^2  + \Delta^2/n^2 + 12/n\right)
\end{align*}
since $i\leq i_1 = \dfrac{2}{3} d\Delta$ and $\Delta < n$.

Continuing, the number of choices of 8-tuple such that $v_4v_5$ is a red edge
is $2i(2i-2)(dn-4)(dn-6)$. 
From this we remove those where also one of the four dashed edges is present
(at most $16 i^2 d^3 n$ choices) or where also one of $v_3v_4$ or $v_5v_6$ is 
a red non-edge
(at most $8 i^2 \Delta d^2 n$ choices).  
This gives
\begin{align*}
&4i^2 (dn)^2 - 40 i^2 dn - 4 i (dn)^2 - 16 i^2 d^3 n - 8 i^2 \Delta d^2 n\\
&= 4i^2(dn)^2 - 2i(dn)^3\left( 20 i/(d^2n^2) + 2/(dn) + 8i/n^2 + 4i\Delta/(dn^2)\right)\\
&\geq 4i^2(dn)^2 - 2i(dn)^3\left( 13 \Delta^2/n^2 + 2/n + 
 6 d\Delta/n^2 \right).
\end{align*}
\item  As a final adjustment, which will appear with positive sign in the inclusion-exclusion,
we must consider choices where one of $v_1v_2$, $v_2v_3$ is red and one of
$v_6v_7$, $v_7v_0$ is red: there are at most 
$2idn(2i + \Delta d)^2 \leq 2i(dn)^3\cdot 9\Delta^2/n^2$ 
of these, since $i\leq i_1$.
\end{itemize}
Putting this together, we have proved that (\ref{eq:lowerI}) holds, and
that
\begin{align*}
&f_{\text{I}}(G)\\
 &\leq 2i(dn)^3\left(1 + \frac{56}{n^2} + \frac{14}{n} +
  \frac{14\Delta^2}{n^2} + \frac{8d\Delta}{n} + \frac{6d^2}{n^2}\right)
  - 8i(d-1)^2d^2n^2 - 4i\Delta d^3 n^2 - 4i^2(dn)^2\\
 &\leq 2i(dn)^3\left(1 + 28\left(\frac{(\Delta+d)^2}{n^2} + \frac{1}{n}\right)\right)  - 8i(d-1)^2d^2n^2 - 4i\Delta d^3 n^2 - 4i^2(dn)^2\\
 &= \UB_{\text{I}}(i).
\end{align*}
This completes the proof.
\end{proof}

\begin{remark}\emph{
If we did not allow the creation of structures in classes \textBipm, \textBiipm\ and 
\textCpm\ then the f-rejection probability would be too big. For instance, suppose 
that we did not allow Class \Bi\ (and that the other Class B and C switchings 
are permitted).  Then for most graphs in ${\strata}_i$, the typical number of 
forward switchings would be approximately 
\[ 2i(dn)^3 - 8i(d-1)^2 (dn)^2-4i\Delta d^3n^2-4i^2(dn)^2-2i\Delta d (dn)^2.\]
(This is approximately $\UB_I(i)$ with the term $2i\Delta d(dn)^2$ subtracted, which is
the typical number of 8-tuples corresponding to a Class \Bi\ switching.)
However, consider an extreme ``worst case'',
when $d=\Delta$ and $d$ divides $2i$,
and the $d$-factor $G$ is composed of two $d$-regular graphs, 
one with only red edges and the other with only black edges.  
Then there are no red edges in $G$ that are incident with a red dashed edge,
so no Class \Bi\ switchings need to be ruled out 
(as none are possible). 
In this case, the number of forward switchings in $G$ is
\[ 2i(dn)^3 - 8i(d-1)^2 (dn)^2-4i\Delta d^3n^2-4i^2(dn)^2.\]
Hence, $f_{\text{I}}(G)$ differs from $\max_{G\in\strata_i} f_{\text{I}}(G)$ by 
$\Omega(i\Delta d(dn)^2)$ for most graphs $G\in {\strata}_i$, causing an 
\mbox{f-rejection} probability of $\Omega(i\Delta d(dn)^2/i(dn)^3)=\Omega(\Delta/n)$ 
in a single step. But then the overall rejection probability will be too big, 
since there will be up to $i_1 = \Theta(d\Delta)$ steps,
and $d\Delta^2/n$ may not be $o(1)$ under the conditions of
Theorem~\ref{thm:B}. 
To reduce the probability of an f-rejection we must allow these Class \Bi\ 
switchings to proceed (and Class \textBvii\ switchings too, by symmetry). 
Similar arguments explain the introduction of Classes
\textBiipm\ and \textCpm. } 
\label{bad-example}
\end{remark}

\subsection{New switching types and counting inverse switchings}\label{s:backwards}

For each $\alpha\in\{\text{A}, \Bipm, \Biipm, \Cpm\}$ we count the inverse switchings of Class~$\alpha$.

\bigskip

\subsubsection{Class A}

Class A switchings are all of Type I.
In this section we will obtain a lower bound for $b_A(G)$ and an upper bound for 
the average of $b_A(G)$ over all $G\in\strata_i$.
The following lemma will be useful.

\begin{lemma}\label{lem:average}
Assume that $d+\Delta=o(n)$ and $i\le\imax$.
Let $G$ be a $d$-factor chosen uniformly at random from ${\strata}_i$. Then the expected number of red 2-paths in $G$ is $O(i^2/n)$ and the expected number of pairs of red edges $\{\{u_1,v_1\}, \{u_2,v_2\}\}$ in $G$ such that $u_1u_2$ is either a red edge in $G$ or a red dashed edge is $O(i^2\Delta/n)$.
\end{lemma}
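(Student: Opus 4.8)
The plan is to compute both expectations via a first-moment argument, exploiting the
size-ratio estimate from Lemma~\ref{lem:sizeRatio} to compare strata. The key
observation is that a red 2-path or a pair of ``close'' red edges is a local
structure involving a bounded number of red edges, so I would count configurations
by conditioning on which red edges are present and using symmetry. Concretely, for
the red 2-paths, I would write the expected count as a sum over unordered pairs of
potential red edges $\{u_1,w\},\{w,u_2\}$ sharing a vertex $w$, of the probability
that both lie in a uniformly random $G\in\strata_i$. Since both edges must be red,
they are forbidden edges of the host graph, so the number of candidate red 2-paths is
at most $O(n\Delta^2)$ (choose the centre $w$, then two of its at most $\Delta$ red
neighbours). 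The remaining task is to bound the probability that a fixed red 2-path
is present in a random $G\in\strata_i$.

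First I would establish, via a switching/double-counting argument analogous to
Lemma~\ref{lem:fb}, that the probability a \emph{specified} pair of red edges both
appear in a uniform $G\in\strata_i$ is $O((i/(\Delta n))^2)$, reflecting that each red
edge is present with probability roughly $i/|E(\overline{H_n})| = 2i/(\Delta n)$ and
that the two events are nearly independent for $i=O(d\Delta)$ and $d+\Delta=o(n)$.
Multiplying by the $O(n\Delta^2)$ candidate centred pairs gives an expected number of
red 2-paths of $O(n\Delta^2 \cdot i^2/(\Delta n)^2)=O(i^2/n)$, as claimed. I expect the
cleanest route to the per-edge presence probability is to reuse Lemma~\ref{lem:sizeRatio}:
the ratio $|\strata_{i-1}|/|\strata_i|$ controls how likely an individual red edge is,
and iterating this (or directly counting red edges across the stratum) yields that the
expected number of red edges in $G\in\strata_i$ is exactly $i$, with the refined
statement that any fixed red edge appears with probability $2i/(\Delta n)(1+o(1))$.

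For the second quantity, the structure is a pair of red edges $\{u_1,v_1\},\{u_2,v_2\}$
together with the requirement that $u_1u_2$ is a red edge in $G$ or a red dashed edge
(a forbidden edge of the host). Here I would count candidate configurations: choose the
connecting red pair $u_1u_2$ (at most $|E(\overline{H_n})|=\Delta n/2$ choices), then
$v_1$ among the at most $\Delta$ red neighbours of $u_1$ and $v_2$ among the at most
$\Delta$ red neighbours of $u_2$, giving $O(n\Delta^3)$ candidate configurations.
The probability that the two red edges $\{u_1,v_1\},\{u_2,v_2\}$ both lie in $G$ is again
$O((i/(\Delta n))^2)$, and the red-dashed-edge condition on $u_1u_2$ is a deterministic
property of the host graph (costing nothing in probability), while the
alternative that $u_1u_2$ is a red edge in $G$ only tightens the bound. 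Thus the
expectation is $O(n\Delta^3 \cdot i^2/(\Delta n)^2)=O(i^2\Delta/n)$, matching the claim.

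The main obstacle will be rigorously justifying the near-independence of the two
red-edge-presence events, i.e.\ that the joint probability factorises up to a
$1+o(1)$ factor rather than merely bounding each marginal. I would handle this by a
switching argument that builds the two red edges one at a time: given that one red
edge is present, the conditional distribution on the rest of $G$ is close to a uniform
$d$-factor in a nearby stratum with one fewer ``free'' red slot, and
Lemma~\ref{lem:sizeRatio} (applied with $i-1$ in place of $i$, valid since
$i=O(d\Delta)$ and $d+\Delta=o(n)$) shows the conditional presence probability of the
second red edge is still $O(i/(\Delta n))$. Since we only need upper bounds of the
stated order, I can afford to be generous with constants and absorb the lower-order
correction terms; the delicate point is simply ensuring the vertex-coincidence cases
(where the two red edges share a vertex, or $v_1,v_2$ collide) contribute only
lower-order terms, which follows because each coincidence removes a factor of $n$ from
the count of candidate configurations while the presence probabilities remain
$O(i/(\Delta n))$ per edge.
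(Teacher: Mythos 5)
Your skeleton coincides with the paper's proof in all essentials: both claims are handled by a first-moment computation over $O(\Delta^2 n)$ candidate red 2-paths (respectively $O(\Delta^3 n)$ candidate configurations $v_1u_1u_2v_2$), with the per-configuration presence probability bounded by $O(i^2/\Delta^2 n^2)$ using switchings between strata combined with Lemma~\ref{lem:sizeRatio} (which gives $|\strata_{i-2}|/|\strata_i|=O(i^2/(d\Delta)^2)$). The paper, however, never makes any independence claim: it lets $\mathcal{W}\subseteq\strata_i$ be the graphs containing \emph{both} red edges and $\mathcal{W}'\subseteq\strata_{i-2}$ the graphs containing neither, and uses a single switching (Figures~\ref{f:2-path} and~\ref{f:3-path}) deleting both red edges simultaneously, with forward count $\Omega((dn)^2)$ and inverse count $O(d^4)$, so that $|\mathcal{W}|/|\mathcal{W}'|=O(d^2/n^2)$ in one stroke; multiplying the three ratios $\frac{|\mathcal{W}|}{|\mathcal{W}'|}\cdot\frac{|\mathcal{W}'|}{|\strata_{i-2}|}\cdot\frac{|\strata_{i-2}|}{|\strata_i|}$ gives the bound directly.

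Two of your justificatory steps would not survive as written, though both are repairable inside your own framework. First, the claim that any \emph{fixed} red edge lies in $G$ with probability $2i/(\Delta n)(1+o(1))$ does not follow from averaging: $\overline{H_n}$ is an arbitrary $\Delta$-regular graph, not edge-transitive, so only the sum of the presence probabilities over all red edges equals $i$, and individual probabilities can vary. This is harmless because only upper bounds are needed, and a one-red-edge switching gives $\pr(e\in G)=O(i/\Delta n)$ uniformly. Second, and more seriously, your near-independence argument — ``given that one red edge is present, the conditional distribution on the rest of $G$ is close to a uniform $d$-factor in a nearby stratum'' — is not a usable statement: conditioning on $e_2\in G$ makes $G$ uniform on $\{G\in\strata_i:e_2\in G\}$, which is not a stratum, so Lemma~\ref{lem:sizeRatio} ``with $i-1$ in place of $i$'' does not apply to this conditional law. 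The rigorous version of your one-edge-at-a-time plan replaces conditional probabilities by a telescoping product of ratios of explicit families (both edges present, in $\strata_i$; only $e_2$ present, in $\strata_{i-1}$; neither present, in $\strata_{i-2}$), each ratio bounded by its own single-red-edge switching at cost $O(d/n)$ per stage — which is exactly the paper's mechanism, merely split into two steps. Once those two points are rewritten in this set-ratio form, your argument is correct and yields the stated bounds $O(i^2/n)$ and $O(i^2\Delta/n)$.
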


\begin{proof} 
Let $uvw$ be a red 2-path in $K_n$. We now bound the probability that $uvw$ is contained in a random $G\in\strata_i$. Let ${\mathcal W}$ denote the set of graphs in $\strata_i$ which contain $uvw$ and let ${\mathcal W}'$ denote the set of graphs in $\strata_{i-2}$ which do not contain neither of $uv, vw$. Consider the switching as shown in Figure~\ref{f:2-path}, where the 7 vertices must be distinct and all edges shown in the figure other than $uv$, $vw$
must be black.

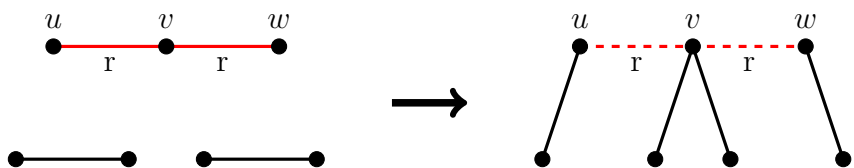
\begin{figure}[ht!]
\begin{center}
\begin{tikzpicture}[scale=1.0]
\draw [-,very thick,red] (1,1.5) -- (4,1.5);
\node [below] at (1.75,1.5) {r}; 
\node [below] at (3.25,1.5) {r}; 
\draw [-,very thick] (0.5,0) -- (2,0);
\draw [-,very thick] (3,0) -- (4.5,0);
\draw [fill] (0.5,0) circle (0.1);
\draw [fill] (2,0) circle (0.1);
\draw [fill] (3,0) circle (0.1);
\draw [fill] (4.5,0) circle (0.1);
\draw [fill] (1,1.5) circle (0.1);
\draw [fill] (2.5,1.5) circle (0.1);
\draw [fill] (4,1.5) circle (0.1);
\node [above] at (1,1.6) {$u$};
\node [above] at (2.5,1.6) {$v$};
\node [above] at (4,1.6) {$w$};
\draw [->,line width = 1mm] (5.5,0.75) -- (6.5,0.75);
\begin{scope}[shift={(7,0)}]
\draw [-,very thick,red,dashed] (1,1.5) -- (4,1.5);
\node [below] at (1.75,1.5) {r}; 
\node [below] at (3.25,1.5) {r}; 
\draw [-,very thick] (0.5,0) -- (1,1.5);
\draw [-,very thick] (2,0) -- (2.5,1.5) -- (3,0);
\draw [-,very thick] (4.5,0) -- (4,1.5);
\draw [fill] (0.5,0) circle (0.1);
\draw [fill] (2,0) circle (0.1);
\draw [fill] (3,0) circle (0.1);
\draw [fill] (4.5,0) circle (0.1);
\draw [fill] (1,1.5) circle (0.1);
\draw [fill] (2.5,1.5) circle (0.1);
\draw [fill] (4,1.5) circle (0.1);
\node [above] at (1,1.6) {$u$};
\node [above] at (2.5,1.6) {$v$};
\node [above] at (4,1.6) {$w$};
\end{scope}
\end{tikzpicture}
\caption{A switching for red 2-paths}
\label{f:2-path}
\end{center}
\end{figure}

Such a switching switches a graph in ${\mathcal W}$ to ${\mathcal W'}$. It is easy to see that the number of forward switchings is at least $(dn)^2(1-O((d+\Delta)/n))=\Omega((dn)^2)$, whereas the number of inverse switchings is at most $d^4$. Hence,
\[
\frac{|{\mathcal W}|}{|{\mathcal W'}|}=O(d^4/(dn)^2)=O(d^2/n^2).
\]
Moreover, $|{\mathcal W'}|\le |\strata_{i-2}|$ as ${\mathcal W'}\subseteq \strata_{i-2}$ and $|\strata_{i-2}|/|\strata_i|=O(i^2/(d\Delta)^2)$ by Lemma~\ref{lem:sizeRatio}.
Thus,
\[
\Prob(uvw\in G)=\frac{|{\mathcal W}|}{|\strata_i|}=
  \frac{|{\mathcal W}|}{|{\mathcal W'}|} \cdot
  \frac{|{\mathcal W'}|}{|\strata_{i-2}|} \cdot
  \frac{|\strata_{i-2}|}{|\strata_i|}
  =O\left(\frac{d^2}{n^2}\frac{i^2}{(d\Delta)^2}\right)=O(i^2/\Delta^2n^2).
\]
The total number of red 2-paths in $K_n$ is $O(\Delta^2 n)$. Thus, by linearity of expectation, the expected number of red 2-paths contained in a random $G\in\strata_i$ is 
\[ O(\Delta^2 n)\cdot O(i^2/\Delta^2 n^2)=O(i^2/n).\]

 The proof for the second claim is similar. 
Fix a red 3-path $v_1u_1u_2v_2$ in $K_n$. Let $G$ be a uniformly random graph in $\strata_i$. Using another switching (see Figure~\ref{f:3-path}) and a similar argument as above, we can bound the probability that $u_1v_1$ and $u_2v_2$ are edges in $G$ by $O(i^2/\Delta^2 n^2)$.   The red dotted edge marked ``?'' denotes a red edge
which may be either present or absent in $G$.
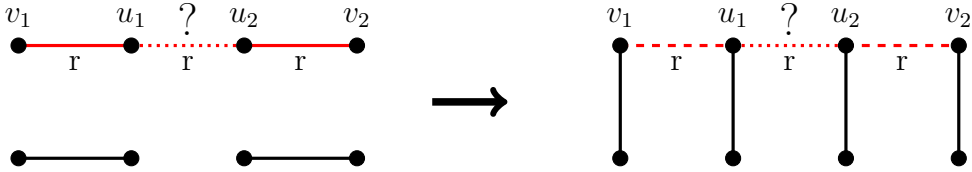
\begin{figure}[ht!]
\begin{center}
\begin{tikzpicture}[scale=1.0]
\draw [-,very thick,red] (1,1.5) -- (2.5,1.5); 
\node [below] at (1.75,1.5) {r}; 
\node [below] at (4.75,1.5) {r}; 
\node [below] at (3.25,1.5) {r}; 
\draw [-,very thick,dotted,red] (2.5,1.5) -- (4.0,1.5);
\node [above] at (3.25,1.5) {\Large{?}};
\draw [-,very thick,red] (4,1.5) -- (5.5,1.5);
\draw [-,very thick] (1.0,0) -- (2.5,0);
\draw [-,very thick] (4,0) -- (5.5,0);
\draw [fill] (1.0,0) circle (0.1);
\draw [fill] (2.5,0) circle (0.1);
\draw [fill] (4,0) circle (0.1);
\draw [fill] (5.5,0) circle (0.1);
\draw [fill] (1,1.5) circle (0.1);
\draw [fill] (2.5,1.5) circle (0.1);
\draw [fill] (4,1.5) circle (0.1);
\draw [fill] (5.5,1.5) circle (0.1);
\node [above] at (1,1.6) {$v_1$};
\node [above] at (2.5,1.6) {$u_1$};
\node [above] at (4,1.6) {$u_2$};
\node [above] at (5.5,1.6) {$v_2$};
\draw [->,line width = 1mm] (6.5,0.75) -- (7.5,0.75);
\begin{scope}[shift={(8,0)}]
\draw [-,very thick,red,dashed] (1,1.5) -- (2.5,1.5) (4,1.5) -- (5.5,1.5); 
\node [below] at (1.75,1.5) {r}; 
\node [below] at (4.75,1.5) {r}; 
\node [below] at (3.25,1.5) {r}; 
\draw [-,very thick,dotted,red] (2.5,1.5) -- (4.0,1.5);
\node [above] at (3.25,1.5) {\Large{?}};
\draw [-,very thick] (1.0,0) -- (1.0,1.5);
\draw [-,very thick] (2.5,0) -- (2.5,1.5);
\draw [-,very thick] (4.0,0) -- (4.0,1.5);
\draw [-,very thick] (5.5,0) -- (5.5,1.5);
\draw [fill] (1.0,0) circle (0.1);
\draw [fill] (2.5,0) circle (0.1);
\draw [fill] (4,0) circle (0.1);
\draw [fill] (5.5,0) circle (0.1);
\draw [fill] (1,1.5) circle (0.1);
\draw [fill] (2.5,1.5) circle (0.1);
\draw [fill] (4,1.5) circle (0.1);
\draw [fill] (5.5,1.5) circle (0.1);
\node [above] at (1,1.6) {$v_1$};
\node [above] at (2.5,1.6) {$u_1$};
\node [above] at (4,1.6) {$u_2$};
\node [above] at (5.5,1.6) {$v_2$};
\end{scope}
\end{tikzpicture}
\caption{A switching for red edges joined by a red edge or red non-edge}
\label{f:3-path}
\end{center}
\end{figure}

The total number of choices for $u_1,u_2,v_1,v_2$ in $K_n$ is $O(\Delta^3 n)$. Thus, by linearity of expectation, the expected number of pairs of red edges $\{u_1,u_2\}$ and $\{v_1,v_2\}$ as in the lemma is $O(\Delta^3 n)\cdot O(i^2/\Delta^2 n^2) = O(i^2\Delta/n)$.
 \end{proof}

Define
\begin{align}
\LB_A(i) =(\Delta n - 2i) d^2 (dn)^2\left(1 - \frac{30}{n}\right) - 3 d^5 \Delta n^2-8i\Delta d^3 n^2-3 \Delta^2 d^4 n^2.\label{LBA}
\end{align}


\begin{lemma} \label{lem:bb}
For any $G\in\strata_i$ we have $b_A(G)\geq \LB_A(i)$.
Furthermore, if $G$ is chosen uniformly at random from $\strata_i$ then
\[ \ex b_A(G)=  \LB_A(i)(1+O((d+\Delta)^2/n^2+1/n)).\]
\end{lemma}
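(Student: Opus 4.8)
We need to count inverse Class A switchings $b_A(G)$ for $G \in \strata_i$. A Class A switching goes $\strata_{i+1} \to \strata_i$, so the inverse operates on $G \in \strata_i$, recreating the graph $G'' \in \strata_{i+1}$. Looking at the Type I switching picture and the Class A row (all four of $v_1v_2, v_2v_3, v_6v_7, v_0v_7$ black), the inverse takes $G$ (the right-hand graph in Figure~\ref{fig:I-switch}) and identifies configurations that could have produced $G$.

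So let me think about what an inverse Class A switching looks like. In the forward direction, we delete red edge $v_0v_1$ and black edges $v_2v_3, v_4v_5, v_6v_7$, and add $v_0v_7, v_1v_2, v_3v_4, v_5v_6$. For Class A, all of $v_0v_7, v_1v_2, v_2v_3, v_6v_7$ are black (and $v_3v_4, v_4v_5, v_5v_6$ black too since not red). So the result $G \in \strata_i$ has: black edges $v_0v_7, v_1v_2, v_3v_4, v_5v_6$, and non-edges $v_0v_1$ (red, the one whose red-ness was removed), $v_2v_3, v_4v_5, v_6v_7$ (all absent in $G$).

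The inverse switching, applied to $G \in \strata_i$: we want to find 8-tuples. In $G$, we identify a red non-edge $v_0v_1$ (there are $\Delta n - 2i$ of these, since $|E(\overline{H_n})| - i = \Delta n/2 - i$ red non-edges, times 2 for ordered), then we need the four black edges $v_0v_7, v_1v_2, v_3v_4, v_5v_6$ present in $G$, with $v_2v_3, v_4v_5, v_6v_7$ absent, and everything distinct except possibly $v_2 = v_7$.

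**The plan.**

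The plan is to mirror the structure of Lemma~\ref{lem:fb} and Lemma~\ref{lem:ff}: establish a deterministic lower bound $b_A(G) \ge \LB_A(i)$ for every $G \in \strata_i$ via a leading term minus error terms argument, and then separately bound the \emph{average} $\ex\, b_A(G)$ from above, showing it matches $\LB_A(i)$ up to the stated relative error. The lower bound is the purely combinatorial part; the averaging is where Lemma~\ref{lem:average} enters.

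\textbf{Step 1: the leading term.} First I would count the main contribution. Choose the ordered red non-edge $(v_0,v_1)$: there are exactly $2(|E(\overline{H_n})| - i) = \Delta n - 2i$ ways, using $(n-\Delta-1)$-regularity. Then choose $v_7$ adjacent to $v_0$ and $v_2$ adjacent to $v_1$ (each a black edge of $G$): roughly $d^2$ ways, allowing $v_2 = v_7$. Then choose the edge $v_3 v_4$ and the edge $v_5 v_6$ as black edges of $G$: roughly $(dn)^2$ ways. This produces the main term $(\Delta n - 2i) d^2 (dn)^2$, matching the leading term of $\LB_A(i)$.

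\textbf{Step 2: subtract bad configurations for the lower bound.} Then I would subtract, via inclusion-exclusion with only the lower-order correction terms kept, the configurations that violate validity: unwanted vertex coincidences (giving the $(1 - 30/n)$ factor on the main term), the requirement that the dashed edges $v_2v_3, v_4v_5, v_6v_7$ be genuinely absent in $G$ (accounting for the $3d^5\Delta n^2$ and $3\Delta^2 d^4 n^2$ type terms, which handle the cases where a putative non-edge is actually present as a black edge or is a red non-edge whose presence would change the class), and the requirement that $v_0v_7, v_1v_2, v_5v_6, v_3v_4$ avoid creating extra red structure (the $8i\Delta d^3 n^2$ term, bounding pairs where one of the chosen solid edges is incident to a red edge). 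Matching each explicit error term in \eqref{LBA} to its combinatorial source is the bookkeeping core of the lower bound; I expect the $30/n$ coefficient to come from totalling the $O(1/n)$ coincidence corrections across all the $O(1)$ pairs of the eight vertices and edges.

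\textbf{Step 3: the upper bound on the average --- the main obstacle.} The harder part is the average upper bound. A worst-case upper bound on $b_A(G)$ would overcount, because the subtractions in Step 2 are only guaranteed on average, not pointwise: specifically, the number of red non-edges $v_0v_1$ whose endpoints are entangled with red 2-paths, or with pairs of red edges joined by a red edge/non-edge, varies across $G \in \strata_i$. This is exactly what Lemma~\ref{lem:average} controls. The plan is to write $b_A(G)$ as the main count minus the bad-configuration count, take expectation over uniform $G \in \strata_i$, and bound $\ex$ of each bad-configuration count. The terms involving red 2-paths contribute $O(i^2/n)$ in expectation and the terms involving two red edges joined by a red edge or red non-edge contribute $O(i^2\Delta/n)$, each multiplied by the appropriate number of free edge-choices ($d^2$ or $(dn)^2$ etc.); after dividing by the main term one checks these land inside the claimed relative error $O((d+\Delta)^2/n^2 + 1/n)$, using $i \le \imax = \tfrac23 d\Delta$ and $\Delta n/2 - i = \Theta(\Delta n)$.

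\textbf{What I expect to be delicate.} The crux is making sure the \emph{average} upper bound and the \emph{pointwise} lower bound agree to within $O((d+\Delta)^2/n^2 + 1/n)$; this requires that every correction term that cannot be bounded pointwise (because it depends on the local red structure around a vertex, which can be as large as $\Theta(\Delta)$ in the worst case, cf.\ Remark~\ref{bad-example}) is instead bounded in expectation by Lemma~\ref{lem:average}. Concretely, the quantities ``number of red 2-paths'' and ``number of red edges joined by a red edge or red non-edge'' are precisely the obstructions that would otherwise inflate the worst-case count, and the content of Lemma~\ref{lem:average} is that they are $O(i^2/n)$ and $O(i^2\Delta/n)$ on average, which is $o$ of the main term after normalisation. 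Verifying that these expected bounds translate into the stated relative error is the one genuinely non-routine calculation; the remaining steps are analogous to Lemma~\ref{lem:ff} and mostly mechanical.
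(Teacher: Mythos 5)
Your plan follows the paper's proof essentially step for step: the same leading term $(\Delta n-2i)d^2(dn)^2$, the same pointwise defect subtraction (vertex coincidences into the $(1-30/n)$ factor, dashed edges present, red dashed non-edges, red chosen edges) yielding $\LB_A(i)$, and then the same second-order inclusion-exclusion for the average, invoking Lemma~\ref{lem:average} for precisely the two configurations — red 2-paths at $v_0$ and the pair of red edges $v_0v_7$, $v_1v_2$ with $v_0v_1$ a red non-edge — whose counts cannot be bounded pointwise. The only small imprecision is the mechanism of the red-2-path term: in the paper it arises as an \emph{add-back} of tuples with $v_0v_1$ red and \emph{present} in $G$, which were over-subtracted inside the bound $4i\Delta d(dn)^2$ for ``$v_1v_2$ or $v_0v_7$ red'' (such tuples were never in the main count, since $v_0v_1$ must be a non-edge), rather than as a defect among valid tuples; this does not affect the approach or the resulting error term.
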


\begin{proof}
Firstly, note that all Class A switchings are of Type I.
(See Table~\ref{tab:type-class}.)
Thus we only need to count inverse Type I Class A switchings.

Consider the right hand side of Figure~\ref{fig:I-switch}. 
First we find a lower bound for the number of ways to select 
an 8-tuple $(v_0,\ldots, v_7)$ such that $v_0v_1$ is a red
non-edge, $v_1v_2$, $v_3v_4$, $v_5v_6$ are all edges and $v_2v_3$, $v_4v_5$, $v_6v_7$
are non-edges, with all vertices distinct except possibly $v_2$ and $v_7$.
There are $(\Delta n-2i)$ choices for $(v_0, v_1)$, then $d^2$
choices for $(v_2,v_7)$, then $dn-4$ choices for $v_2v_3$ avoiding the two chosen edges, and then $dn-6$ choices for $v_4v_5$ avoiding the three chosen edges.  
This gives the expression $(\Delta n - 2i) d^2(dn-4)(dn-6)$.

For the lower bound, we must subtract from this expression the
number of choices of 8-tuple with at least one defect.
The possible defects are: vertex coincidence; 
a dashed edge (other than $v_0v_1$) is present in $G$; 
a dashed edge (other than $v_0v_1$) is a red non-edge; 
or a chosen edge is a red edge in $G$.  
We now give upper bounds on the number of choices with particular defects.
\begin{itemize}
\item Vertex coincidences:
Out of $\binom{8}{2}=28$ possible vertex coincidences, 
one is allowed and 7 are impossible, leaving 20 vertex coincidences
that must be explicitly ruled out: at most $O(d^4 \Delta n^2)$ choices.
\item
One of the dashed edges 
(other than $v_0v_1$) is present in $G$:\  at most 
\[ 3d^2(d-1)^2 (\Delta n - 2i)(dn-8) \leq 3d^5 \Delta n^2\] choices.
\item $v_1v_2$ is red, or $v_0v_7$ is red:\ at most $2\cdot 2i\Delta d (dn)^2$ choices.
For later use, we remark that this upper bound includes cases where the edge $v_0v_1$ is red
and present in $G$.
\item $v_3v_4$ is red, or $v_5v_6$ is red:\ at most 
$2\cdot 2i(\Delta n-2i)d^2 (dn) = 2\cdot 2i \Delta  d^3 n^2$ choices;
\item $v_2v_3$ is a red non-edge, or $v_6v_7$ is a red non-edge:\ at most $2\cdot  \Delta^2 d^4 n^2$ choices;
\item $v_4v_5$ is a red non-edge:\ at most $(\Delta n-2i)^2 d^4
\leq \Delta^2 d^4 n^2$ choices.
\end{itemize}
Subtracting these choices leads to the inequality
\begin{align*}
 b_A(G) &\geq (\Delta n - 2i) d^4 n^2 \left(1 - \frac{30}{n}\right) - 3 d^5 \Delta n^2-8i\Delta d^3 n^2-
   3 \Delta^2 d^4 n^2,
\end{align*}
proving the first statement of the lemma.

To prove the second statement, we must investigate the average value of 
$b_A(G)$ over all $G\in\strata_{i}$.  
We continue inclusion-exclusion, calculating the number (or,
in two cases, the expected number) of 8-tuples containing two defects.
\begin{itemize}
\item
Two or more dashed edges (other than $v_0v_1$) are present: at most
$O(d^6\, \Delta n)$ choices, giving a relative error of $O(d^2/n^2)$.
\item One of the dashed edges (other than $v_0v_1$)
is a red non-edge and one of the chosen edges is red: a
t most $O(i\Delta^2 d^3 n)$
such choices, giving a relative error of $O(\Delta^2/n^2)$, since $i\leq i_1$.
\item
One of the dashed edges (other than $v_0v_1$) is present in $G$,
and one of the chosen edges is red: at most $O(i \Delta d^4 n)$ such choices,
giving a relative error of $O(d\Delta/n^2)$.
\item 
Two of the chosen edges are red: at most
$O(i^2 d^2 \Delta n)$ for all $G\in \strata_i$, giving a relative error of
$O(i^2/(d^2 n^2)) = O(\Delta^2/n^2)$, 
unless the two chosen edges are $v_1v_2$ and $v_0v_7$.  
The number of choices such that
$v_1v_2$ and $v_0v_7$ are red, with $v_0v_1$ a red non-edge, can vary a 
lot across 
$\strata_i$, and here we will need to calculate the average.  
(We come back to this, below.)
\item
One dashed edge is a red non-edge and another dashed edge is present in $G$
(neither edge is $v_0v_1$): at most $O(\Delta^2 d^5 n)$ such
choices, giving a relative error of $O(d\Delta/n^2)$.  
\item
Two dashed edges (other than $v_0v_1$)
are both red non-edges: at most $O(d^4\Delta^3 n)$ choices, 
giving a relative error of $O(\Delta^2/n^2)$.
\end{itemize}

There are two cases that must be considered further.  
\begin{itemize}
\item[$\ast$] Recall that in the lower 
bound, we subtracted some ``illegal'' cases where $v_0v_1$ is red and present.  
For the average-case expression we must add these cases back in. The term 
$4i\Delta d(dn)^2$, which we subtracted to obtain the lower bound, was an upper 
bound for the number of 8-tuples in which $v_1v_2$ or $v_7v_0$ is red. 
To obtain this bound, we first choose a red edge $v_0v_7$, say, in $2i$ ways, 
and then are at most $\Delta$ choices for $v_1$.  This upper bound of $\Delta$ 
includes the possibility that the edge $v_0v_1$ is present in $G$.  But such 
choices are not valid inverse Type I switchings, and so we must undo this 
subtraction by adding them back in now.
The number of choices of $(v_7,v_0,v_1,v_2)$ such that $v_0v_7$ and $v_0v_1$ are 
red edges in $G$, and $v_1v_2$ is an edge in $G$ (of any colour), varies quite 
widely among different $G\in\strata_i$.
By Lemma~\ref{lem:average}, the expected number of choices for this 4-tuple for 
a uniformly random $G\in \strata_i$ is $O(i^2\, d/n)$.  
\item[$\ast$]
The second thing we must consider is the
choices for the 8-tuple switching in which $v_0v_7$ and $v_1v_2$ are both red 
edges in $G$.
By Lemma~\ref{lem:average}, the expected number of 4-tuples $(v_7,v_0,v_1,v_2)$ with this property
(and with $v_0v_1$ a red non-edge in $G$) is $O(i^2 \Delta/n)$. 
\end{itemize}
Adding these counts together and multiplying by $(dn)^2$,
the expected number
of choices of $(v_0,v_1,\ldots, v_7)$ which must be added to the lower bound
\[ (dn)^2\, O((d + \Delta) i^2/n) = O(i^2 d^2(d+\Delta)n),\]
leading to a relative error of $O((d+\Delta)\Delta/n^2)$.

This completes the proof of the second statement of the lemma.
\end{proof}

\subsubsection{\bf Classes \textBiipm}
\bigskip

Classes \textBiipm\ are easy to handle, so we discuss them before
Classes \textBipm.
Define 
\begin{equation}
\LB_{\alpha}(i)=(\Delta n-2i)\Delta d^4n - 8i\Delta^2 d^3 n -2i \Delta d^4 n -
  2 \Delta^2 d^5 n -12\Delta^2d^4n 
  \quad\mbox{for $\alpha\in\{\Biipm\}$.} \label{LBB2} 
\end{equation}

\begin{lemma}\label{lem:bB2} 
For any $G\in {\strata}_i$ and for \emph{$\alpha\in\{\Biipm\}$},
\[
\LB_{\alpha}(i)\le b_{\alpha}(G) \le (\Delta n-2i) \Delta d^4n,
\]
and thus
\[
b_{\alpha}(G)=\Delta^2d^4n^2\left(1+O\left(\frac{d+\Delta}{n}\right)\right).
\]
\end{lemma}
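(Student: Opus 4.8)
The plan is to count, for a fixed $G\in\strata_i$, the inverse switchings of Class $\Biipm$ that produce $G$, exploiting the two regularity hypotheses: $\overline{H_n}$ is $\Delta$-regular, so every vertex has red-degree exactly $\Delta$, and $G$ is $d$-regular. By the mirror symmetry of the two labellings in Figure~\ref{fig:vertex-labels} it suffices to treat $\alpha=\Bii$; the count for $\Bvi$ is identical. An inverse Class $\Bii$ switching acts on the right-hand configuration of the $\Biipm$ row of Table~\ref{tab:TypeI}: the $8$-tuple $(v_0,\dots,v_7)$ must realise in $G$ the alternating $8$-cycle with $v_0v_1$ and $v_2v_3$ red non-edges, $v_1v_2,v_3v_4,v_5v_6,v_0v_7$ black edges, and $v_4v_5,v_6v_7$ black non-edges, all vertices distinct except possibly $v_2=v_7$. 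Reading this greedily gives the upper bound at once: there are exactly $\Delta n-2i$ ordered red non-edges $(v_0,v_1)$, then at most $d$ choices of $v_2\in N(v_1)$, at most $\Delta$ choices of $v_3$ with $v_2v_3$ red, at most $d$ choices each of $v_4\in N(v_3)$ and $v_7\in N(v_0)$, and at most $dn$ ordered edges $(v_5,v_6)$, so $b_\alpha(G)\le(\Delta n-2i)\Delta d^4n$.

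For the lower bound I would form the same product but relax the condition on $v_2v_3$ to ``$v_2v_3$ is a red pair of $\overline{H_n}$'' (present or absent); by $\Delta$-regularity this branching is \emph{exactly} $\Delta$, so the relaxed count equals $(\Delta n-2i)\Delta d^4n$ up to distinctness adjustments, and I then subtract by inclusion--exclusion every way a relaxed $8$-tuple fails to be a valid Class $\Bii$ inverse switching. The worst-case orders over $G\in\strata_i$ are: (i) the relaxed pair $v_2v_3$ is in fact present (a red edge)---fix that red edge in $2i$ ways and complete, giving $O(i\Delta d^4n)$, the term $2i\Delta d^4n$; (ii) one of the four edges $v_1v_2,v_3v_4,v_5v_6,v_0v_7$ is red---fix it among the $2i$ red edges and complete, giving $O(i\Delta^2d^3n)$ per edge, hence $8i\Delta^2d^3n$; (iii) one of $v_4v_5,v_6v_7$ is present, pinning the free edge $(v_5,v_6)$ to a neighbourhood and costing a factor $d/n$, giving $O(\Delta^2d^5n)$, the term $2\Delta^2d^5n$; and (iv) the disallowed vertex coincidences (all pairs except $v_2=v_7$), each pinning a free vertex at cost $\sim 1/n$, contributing $O(\Delta^2d^4n)$, the term $12\Delta^2d^4n$. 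Subtracting (i)--(iv) reproduces the explicit $\LB_\alpha(i)$ of~(\ref{LBB2}), just as in Lemma~\ref{lem:bb}.

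One further failure must be tracked: $v_4v_5$ or $v_6v_7$ could itself be a red non-edge, creating a \emph{third} red non-edge in the $8$-cycle and pinning $(v_5,v_6)$ to a red-non-neighbour of $v_4$ (resp.\ $v_7$), at cost $\Delta/n$ and contributing $O(\Delta^3d^4n)$. This is the largest defect relative to the main term $\Delta^2d^4n^2$, and I would check with care that it stays within the claimed error: since $\Delta^3d^4n/(\Delta^2d^4n^2)=\Delta/n=O((d+\Delta)/n)$, it is absorbed into the relative error even though it is of higher order than the explicit terms kept in $\LB_\alpha(i)$. Combining the matching upper and lower bounds, both holding uniformly over all $G\in\strata_i$ and differing by $O((d+\Delta)/n)$ (using $2i\le\tfrac43 d\Delta=o(\Delta n)$), yields $b_\alpha(G)=\Delta^2d^4n^2\bigl(1+O((d+\Delta)/n)\bigr)$ with no average-case argument required---this uniformity is exactly what makes Classes $\Biipm$ ``easy'' compared with Class~A.

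The step I expect to be the main obstacle is the second red non-edge. Unlike Class~A, where the lone red non-edge $v_0v_1$ is chosen freely and all remaining branchings are edges of exact degree $d$, here the two red non-edges $v_0v_1$ and $v_2v_3$ are joined by the single edge $v_1v_2$ and so cannot be selected independently. The $\Delta$-regularity of $\overline{H_n}$ is precisely the tool that rescues the lower bound: it makes the $v_2v_3$ branching countable exactly as $\Delta$, after which its present-edge failures are charged cleanly to the $O(i\Delta d^4n)$ defect of type (i). The same non-independence is what generates the genuine three-red-non-edge defect of order $\Delta^3d^4n$, and verifying that this term---the sharpest of all the relative errors---remains $O((d+\Delta)/n)$ is the one place where I would proceed slowly.
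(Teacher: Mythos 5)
Your counting scheme is essentially the paper's own: the same greedy upper bound $(\Delta n-2i)\Delta d^4 n$ (the paper takes $(v_7,v_2)$ together in $d^2$ ways rather than serially, which is immaterial), and your defect groups (i)--(iv) match the paper's subtracted terms exactly: $2i\Delta d^4 n$ for ``$v_2v_3$ red and present'', $8i\Delta^2 d^3 n$ in total for a red edge among $v_1v_2$, $v_0v_7$, $v_3v_4$, $v_5v_6$, $2\Delta^2 d^5 n$ for ``$v_4v_5$ or $v_6v_7$ present'', and $12\Delta^2 d^4 n$ for the forbidden coincidences. Your relaxation device (count $v_2v_3$ as any red pair, exactly $\Delta$ by the $\Delta$-regularity of $\overline{H_n}$, then subtract the present case) is precisely how the paper's main term and its $2i\Delta d^4 n$ correction arise, and your remark that the counts concentrate uniformly over $\strata_i$ with no average-case argument is indeed what distinguishes this lemma from Lemma~\ref{lem:bb}. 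You also implicitly use, as the paper records in Table~\ref{tab:type-class}, that Class \textBiipm\ switchings are all of Type~I, so only the Type~I inverse structure contributes to $b_\alpha(G)$; that should be said explicitly.

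The one divergence is your fifth defect, ``$v_4v_5$ or $v_6v_7$ is a red non-edge'', of size at most $2(\Delta n-2i)\Delta^2 d^4\le 2\Delta^3 d^4 n$. You are right that the Class \textBiipm\ inverse structure requires both pairs to be \emph{black} non-edges (the general 4-edge-switching condition forces $v_4v_5$ black, and the B2 class condition forces $v_6v_7$ black), so these tuples must be excluded --- and, notably, the paper's own proof never subtracts them. But then your middle claim, that subtracting (i)--(iv) ``reproduces the explicit $\LB_\alpha(i)$ of~(\ref{LBB2})'', cannot be combined with this extra subtraction: what your count actually proves is $b_\alpha(G)\ge \LB_\alpha(i)-2\Delta^3 d^4 n$, and since $2\Delta^3 d^4 n$ dominates every explicit correction in~(\ref{LBB2}) once $\Delta\gg d$ and $i$ is small, the first inequality of the lemma with the literal $\LB_\alpha(i)$ does not follow from your argument (nor from the paper's); for a generic $G\in\strata_i$ with $\Delta\gg d$ the red-non-edge tuples alone push $b_\alpha(G)$ below the printed $\LB_\alpha(i)$. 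The asymptotic statement is untouched, exactly as you compute, since $\Delta^3 d^4 n/(\Delta^2 d^4 n^2)=\Delta/n=O((d+\Delta)/n)$, and the repair is simply to fold an extra $-2\Delta^3 d^4 n$ into the definition of $\LB_\alpha(i)$ in~(\ref{LBB2}); this is harmless downstream, because the b-rejection analysis in Lemma~\ref{lem:rejections} only uses $\ex\, b_{\alpha}(G') - \LB_{\alpha}(i') = O((d+\Delta)/n)\,\LB_{\alpha}(i')$. So state your lower bound honestly with the extra term (or redefine $\LB_\alpha(i)$ accordingly) rather than asserting both at once --- as written, that single sentence is internally inconsistent, even though your accounting is the one that is faithful to the definition of the switching.
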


\begin{proof}
Observe that all Class~\textBiipm\ switchings are of Type~I. 
(See Table~\ref{tab:type-class}.)  Thus we only need to count inverse
Type~I Class~\Bii\ switchings, say, and the same bounds will hold for
Class \textBvi, by symmetry.

There are $\Delta n-2i$ ways to choose $v_0$ and $v_1$. Then $d^2$ ways to fix $v_7$ and $v_2$. Then there are at most $\Delta d$ ways to choose $v_3$ and $v_4$ and finally at most $dn$ ways to choose $v_5$ and $v_6$. So the total number of inverse switchings is at most $(\Delta n-2i)\Delta d^4n$, giving the upper bound as desired. To deduce a lower bound, we subtract the number of the following structures, for which we only need an upper bound:
\begin{itemize}
\item $v_0v_7$ or $v_1v_2$ is red:\ at most $2\cdot 2i \Delta^2 d^2 \cdot dn=4i\Delta^2 d^3 n$ choices.
\item $v_2v_3$ is red and present:\ at most $2i\Delta d^3 \cdot dn=2i\Delta d^4n$ choices.
\item $v_3v_4$ is red:\ at most $2i \Delta^2 d^2 (dn)=2i\Delta^2d^3n$ choices.
\item $v_5v_6$ is red:\  at most $2i (\Delta n-2i)\Delta d^3\le 2i\Delta^2d^3n$ choices.
\item $v_6v_7$ or $v_4v_5$ is present:\  at most $2\cdot (\Delta n-2i)\Delta d^5\le 2\Delta^2d^5n$ choices. 
\item vertex coincidence, other than $v_2=v_7$:\ at most $12\cdot (\Delta n-2i) d^4 \Delta \le 12\Delta^2 d^4n$.
\end{itemize}
This immediately gives the required lower bound on the number of available
inverse Class \Bii\ switchings, completing the proof.
\end{proof}

\bigskip

\subsubsection{Classes \textBipm}\label{ss:boostB1}

The inverse switching of Type~I Class~\textBipm\ is indeed the same as the forward switching, up to a permutation of the labelling of the vertices involved in the switching. Recall the example discussed in Remark~\ref{bad-example}, where the $d$-factor is composed of a union of a $d$-regular graph with only red edges and a $d$-regular graph with only black edges. It is easy to see that in such a graph, the number of inverse Type~I Class~\textBipm\ switchings is zero. 
In general, the number of the following structure in $G$ can vary a lot among 
$G\in{\strata}_i$:
\begin{center}
\begin{tikzpicture}
\draw [very thick,-] (0,0) -- (1,0); 
\draw [very thick,-,red] (2,0) -- (3,0);  \draw [very thick,dashed,-,red] (1,0) -- (2,0); 
\node [above] at (0.5,0.1) {b};  
\node [above] at (1.5,0.1) {r};  
\node [above] at (2.5,0.1) {r};  
\draw [fill] (0,0) circle (0.1); \draw [fill] (1,0) circle (0.1);
\draw [fill] (2,0) circle (0.1); \draw [fill] (3,0) circle (0.1);
\end{tikzpicture}
\end{center}
However, we do know that the sum of the number of the following structures in any 
$d$-factor $G\in {\strata}_i$ is between $2i (\Delta-1)(d-1)$ and $2i(\Delta-1) d$:
\begin{center}
\begin{tikzpicture}
\draw [very thick,-] (0,0) -- (1,0);
\draw [very thick,-,red] (2,0) -- (3,0); \draw [very thick,dashed,-,red] (1,0) -- (2,0); 
\node [above] at (0.5,0.1) {b};  
\node [above] at (1.5,0.1) {r};  
\node [above] at (2.5,0.1) {r};  
\node [below] at (3.5,0.3) {+};  
\draw [fill] (0,0) circle (0.1); \draw [fill] (1,0) circle (0.1);
\draw [fill] (2,0) circle (0.1); \draw [fill] (3,0) circle (0.1);
\begin{scope}[shift={(4,0)}]
\draw [very thick,-] (0,0) -- (1,0); 
\draw [very thick,-,red] (2,0) -- (3,0); \draw [very thick,-,red] (1,0) -- (2,0); 
\node [above] at (0.5,0.1) {b};  
\node [above] at (1.5,0.1) {r};  
\node [above] at (2.5,0.1) {r};  
\node [below] at (3.5,0.3) {+};  
\draw [fill] (0,0) circle (0.1); \draw [fill] (1,0) circle (0.1);
\draw [fill] (2,0) circle (0.1); \draw [fill] (3,0) circle (0.1);
\end{scope}
\begin{scope}[shift={(8,0)}]
\draw [very thick,-,red] (0,0) -- (1,0); 
\draw [very thick,-,red] (2,0) -- (3,0); \draw [very thick,dashed,-,red] (1,0) -- (2,0); 
\node [above] at (0.5,0.1) {r};  
\node [above] at (1.5,0.1) {r};  
\node [above] at (2.5,0.1) {r};  
\node [below] at (3.5,0.3) {+};  
\draw [fill] (0,0) circle (0.1); \draw [fill] (1,0) circle (0.1);
\draw [fill] (2,0) circle (0.1); \draw [fill] (3,0) circle (0.1);
\end{scope}
\begin{scope}[shift={(12,0)}]
\draw [very thick,-,red] (0,0) -- (1,0); 
\draw [very thick,-,red] (2,0) -- (3,0); \draw [very thick,-,red] (1,0) -- (2,0); 
\node [above] at (0.5,0.1) {r};  
\node [above] at (1.5,0.1) {r};  
\node [above] at (2.5,0.1) {r};  
\draw [fill] (0,0) circle (0.1); \draw [fill] (1,0) circle (0.1);
\draw [fill] (2,0) circle (0.1); \draw [fill] (3,0) circle (0.1);
\end{scope}
\end{tikzpicture}
\end{center}

This motivates the introduction of switchings of other types than Type I. 
We display these new switchings in Table~\ref{tab:TypeIV}. Here, the colours 
of the edges and non-edges must be black unless specified as red. 
These new types of switchings are categorised into Class \Bi\ or \textBvii,
under the rule that if the type ends with ``+'' then the class also ends with ``+'',
and similarly for ``$-$''. Note that due to symmetry, some switchings of different 
types have the same definition. For instance, type IIa$+$ and type IIa$-$ switchings 
are defined in the same way. However, they are introduced as booster switchings for 
different classes, and thus are categorised into different types.
Again, if the class name ends with a ``+'' then the vertices are labelled
as shown on the left of Figure~\ref{fig:vertex-labels}, while if the class name ends 
with a ``$-$'' then the vertices are labelled as shown on the right of 
Figure~\ref{fig:vertex-labels}.

We will show that for any $G\in{\strata}_i$, the number of inverse Class~\Bi\ 
(or \textBvii) switchings does not vary much, even though the number can be zero if restricted to inverse Type I Class \Bi\ (respectively, Class~\textBvii) switchings only.

\begin{table}[ht!]
\begin{center}
\renewcommand{\arraystretch}{1.2}
\begin{tabular}{|c|c|c|}
\hline
type, class &  action &  the switching \\
\hline & & \\
\textIIapm,\, \textBipm  & $\mathcal{S}_{i-1} \rightarrow \mathcal{S}_i$ &
\begin{tikzpicture}[scale=0.7]
\node [right] at (1.3,2.2) {r};  
\node [left] at (-0.3,2.2) {r};  
\draw [-,very thick,red] (0,2.414) -- (1,2.414); 
\node [above] at (0.5, 2.514) {r};
\draw [-,very thick] (1.707,1.707) -- (1.707,0.707);
\draw [-,very thick] (-0.707,1.707) -- (-0.707,0.707);
\draw [-,very thick] (0,0) -- (1,0);
\draw [-,very thick,dashed] (0,0) -- (-0.707,0.707);
\draw [-,very thick,dashed,red] (-0.707,1.707) -- (0,2.414); 
\draw [-,very thick,dashed,red] (1,2.414) -- (1.707,1.707); 
\draw [-,very thick,dashed] (1.707,0.707) -- (1,0);
\draw [-,very thick,dashed] (1.707,0.707) -- (1,0);
\draw [fill] (0,0) circle (0.1); \draw [fill] (1,0) circle (0.1);
\draw [fill] (1.707,0.707) circle (0.1); \draw [fill] (1.707,1.707) circle (0.1);
\draw [fill] (1,2.414) circle (0.1); \draw [fill] (0,2.414) circle (0.1);
\draw [fill] (-0.707,1.707) circle (0.1); \draw [fill] (-0.707,0.707) circle (0.1);
\draw [->,line width = 1mm] (3,0.866) -- (4,0.866);
\begin{scope}[shift={(6,0)}]
\draw [-,very thick,dashed,red] (0,2.414) -- (1,2.414); 
\node [above] at (0.5, 2.514) {r};
\node [right] at (1.3,2.2) {r};  
\node [left] at (-0.3,2.2) {r};  
\draw [-,very thick,dashed] (1.707,1.707) -- (1.707,0.707);
\draw [-,very thick,dashed] (-0.707,1.707) -- (-0.707,0.707);
\draw [-,very thick,dashed] (0,0) -- (1,0);
\draw [-,very thick] (0,0) -- (-0.707,0.707);
\draw [-,very thick,red] (-0.707,1.707) -- (0,2.414); 
\draw [-,very thick,red] (1,2.414) -- (1.707,1.707); 
\draw [-,very thick] (1.707,0.707) -- (1,0);
\draw [fill] (0,0) circle (0.1); \draw [fill] (1,0) circle (0.1);
\draw [fill] (1.707,0.707) circle (0.1); \draw [fill] (1.707,1.707) circle (0.1);
\draw [fill] (1,2.414) circle (0.1); \draw [fill] (0,2.414) circle (0.1);
\draw [fill] (-0.707,1.707) circle (0.1); \draw [fill] (-0.707,0.707) circle (0.1);
\end{scope}
\end{tikzpicture}\\
\hline & & \\
\textIIbpm,\, \textBipm  & $\mathcal{S}_{i-2} \rightarrow \mathcal{S}_i$ &
\begin{tikzpicture}[scale=0.7]
\draw [-,very thick,dashed,red] (0,2.414) -- (1,2.414); 
\node [below] at (0.5, 2.314) {r};
\draw [-,very thick,dashed] (1.707,1.707) -- (1.707,0.707); 
\node [right] at (1.3,2.2) {r};  
\draw [-,very thick,dashed] (-0.707,1.707) -- (-0.707,0.707);
\draw [-,very thick,dashed] (0,0) -- (1,0);
\draw [-,very thick] (0,0) -- (-0.707,0.707);
\draw [-,very thick] (-0.707,1.707) -- (0,2.414);
\draw [-,dashed,very thick,red] (1,2.414) -- (1.707,1.707); 
\draw [-,very thick] (1.707,0.707) -- (1,0);
\draw [fill] (0,0) circle (0.1); \draw [fill] (1,0) circle (0.1);
\draw [fill] (1.707,0.707) circle (0.1); \draw [fill] (1.707,1.707) circle (0.1);
\draw [fill] (1,2.414) circle (0.1); \draw [fill] (0,2.414) circle (0.1);
\draw [fill] (-0.707,1.707) circle (0.1); \draw [fill] (-0.707,0.707) circle (0.1);
\draw [-,very thick] (0,2.414) -- (0,3.214);
\draw [-,very thick,dashed] (0,3.214) -- (0,4.014);
\draw [-,very thick] (0,4.014) -- (1,4.014);
\draw [-,very thick,dashed] (1,3.214) -- (1,4.014);
\draw [-,very thick] (1,2.414) -- (1,3.214);
\draw [fill] (0,3.214) circle (0.1); \draw [fill] (1,3.214) circle (0.1);
\draw [fill] (0,4.014) circle (0.1); \draw [fill] (1,4.014) circle (0.1);
\draw [-,very thick] (1,2.414) -- (1.707,3.121);
\draw [-,very thick,dashed] (2.414,3.808) -- (1.707,3.121);
\draw [-,very thick] (2.414,2.414) -- (1.707,1.707);
\draw [-,very thick,dashed] (2.414,2.414) -- (3.121,3.121);
\draw [-,very thick] (2.414,3.808) -- (3.121,3.121);
\draw [fill] (2.414,3.808) circle (0.1); \draw [fill] (1.707,3.121) circle (0.1);
\draw [fill] (2.414,2.414) circle (0.1); \draw [fill] (3.121,3.121) circle (0.1);
\draw [->,line width = 1mm] (3,0.866) -- (4,0.866);
\begin{scope}[shift={(6,0)}]
\draw [-,very thick,red] (0,2.414) -- (1,2.414); 
\node [below] at (0.5, 2.314) {r};
\draw [-,very thick,dashed] (1.707,1.707) -- (1.707,0.707);
\node [right] at (1.3,2.2) {r};  
\draw [-,very thick,dashed] (-0.707,1.707) -- (-0.707,0.707);
\draw [-,very thick,dashed] (0,0) -- (1,0);
\draw [-,very thick] (0,0) -- (-0.707,0.707);
\draw [-,very thick] (-0.707,1.707) -- (0,2.414);
\draw [-,very thick,red] (1,2.414) -- (1.707,1.707); 
\draw [-,very thick] (1.707,0.707) -- (1,0);
\draw [fill] (0,0) circle (0.1); \draw [fill] (1,0) circle (0.1);
\draw [fill] (1.707,0.707) circle (0.1); \draw [fill] (1.707,1.707) circle (0.1);
\draw [fill] (1,2.414) circle (0.1); \draw [fill] (0,2.414) circle (0.1);
\draw [fill] (-0.707,1.707) circle (0.1); \draw [fill] (-0.707,0.707) circle (0.1);
\draw [-,very thick,dashed] (0,2.414) -- (0,3.214);
\draw [-,very thick] (0,3.214) -- (0,4.014);
\draw [-,very thick,dashed] (0,4.014) -- (1,4.014);
\draw [-,very thick] (1,3.214) -- (1,4.014);
\draw [-,very thick,dashed] (1,2.414) -- (1,3.214);
\draw [fill] (0,3.214) circle (0.1); \draw [fill] (1,3.214) circle (0.1);
\draw [fill] (0,4.014) circle (0.1); \draw [fill] (1,4.014) circle (0.1);
\draw [-,very thick,dashed] (1,2.414) -- (1.707,3.121);
\draw [-,very thick] (2.414,3.808) -- (1.707,3.121);
\draw [-,very thick,dashed] (2.414,2.414) -- (1.707,1.707);
\draw [-,very thick] (2.414,2.414) -- (3.121,3.121);
\draw [-,very thick,dashed] (2.414,3.808) -- (3.121,3.121);
\draw [fill] (2.414,3.808) circle (0.1); \draw [fill] (1.707,3.121) circle (0.1);
\draw [fill] (2.414,2.414) circle (0.1); \draw [fill] (3.121,3.121) circle (0.1);
\end{scope}
\end{tikzpicture}\\
\hline & & \\
\textIIcpm,\, \textBipm & $\mathcal{S}_{i-3} \rightarrow \mathcal{S}_i$ &
\begin{tikzpicture}[scale=0.7]
\draw [-,very thick,dashed,red] (0,2.414) -- (1,2.414); 
\node [below] at (0.5, 2.314) {r};
\draw [-,very thick,dashed] (1.707,1.707) -- (1.707,0.707);
\node [right] at (1.3,2.2) {r};  
\node [left] at (-0.3,2.2) {r};  
\draw [-,very thick,dashed] (-0.707,1.707) -- (-0.707,0.707);
\draw [-,very thick,dashed] (0,0) -- (1,0);
\draw [-,very thick] (0,0) -- (-0.707,0.707);
\draw [-,very thick,dashed,red] (-0.707,1.707) -- (0,2.414); 
\draw [-,dashed,very thick,red] (1,2.414) -- (1.707,1.707); 
\draw [-,very thick] (1.707,0.707) -- (1,0);
\draw [fill] (0,0) circle (0.1); \draw [fill] (1,0) circle (0.1);
\draw [fill] (1.707,0.707) circle (0.1); \draw [fill] (1.707,1.707) circle (0.1);
\draw [fill] (1,2.414) circle (0.1); \draw [fill] (0,2.414) circle (0.1);
\draw [fill] (-0.707,1.707) circle (0.1); \draw [fill] (-0.707,0.707) circle (0.1);
\draw [-,very thick] (0,2.414) -- (0,3.214);
\draw [-,very thick,dashed] (0,3.214) -- (0,4.014);
\draw [-,very thick] (0,4.014) -- (1,4.014);
\draw [-,very thick,dashed] (1,3.214) -- (1,4.014);
\draw [-,very thick] (1,2.414) -- (1,3.214);
\draw [fill] (0,3.214) circle (0.1); \draw [fill] (1,3.214) circle (0.1);
\draw [fill] (0,4.014) circle (0.1); \draw [fill] (1,4.014) circle (0.1);
\draw [-,very thick] (1,2.414) -- (1.707,3.121);
\draw [-,very thick,dashed] (2.414,3.808) -- (1.707,3.121);
\draw [-,very thick] (2.414,2.414) -- (1.707,1.707);
\draw [-,very thick,dashed] (2.414,2.414) -- (3.121,3.121);
\draw [-,very thick] (2.414,3.808) -- (3.121,3.121);
\draw [fill] (2.414,3.808) circle (0.1); \draw [fill] (1.707,3.121) circle (0.1);
\draw [fill] (2.414,2.414) circle (0.1); \draw [fill] (3.121,3.121) circle (0.1);
\draw [-,very thick] (0,2.414) -- (-0.707,3.121);
\draw [-,very thick,dashed] (-1.414,3.808) -- (-0.707,3.121);
\draw [-,very thick] (-1.414,2.414) -- (-0.707,1.707);
\draw [-,very thick,dashed] (-1.414,2.414) -- (-2.121,3.121);
\draw [-,very thick] (-1.414,3.808) -- (-2.121,3.121);
\draw [fill] (-1.414,3.808) circle (0.1); \draw [fill] (-0.707,3.121) circle (0.1);
\draw [fill] (-1.414,2.414) circle (0.1); \draw [fill] (-2.121,3.121) circle (0.1);
\draw [->,line width = 1mm] (3,0.866) -- (4,0.866);
\begin{scope}[shift={(6,0)}]
\draw [-,very thick,red] (0,2.414) -- (1,2.414);  
\node [below] at (0.5, 2.314) {r};
\draw [-,very thick,dashed] (1.707,1.707) -- (1.707,0.707);
\node [right] at (1.3,2.2) {r};  
\node [left] at (-0.3,2.2) {r};  
\draw [-,very thick,dashed] (-0.707,1.707) -- (-0.707,0.707);
\draw [-,very thick,dashed] (0,0) -- (1,0);
\draw [-,very thick] (0,0) -- (-0.707,0.707);
\draw [-,very thick,red] (-0.707,1.707) -- (0,2.414); 
\draw [-,very thick,red] (1,2.414) -- (1.707,1.707); 
\draw [-,very thick] (1.707,0.707) -- (1,0);
\draw [fill] (0,0) circle (0.1); \draw [fill] (1,0) circle (0.1);
\draw [fill] (1.707,0.707) circle (0.1); \draw [fill] (1.707,1.707) circle (0.1);
\draw [fill] (1,2.414) circle (0.1); \draw [fill] (0,2.414) circle (0.1);
\draw [fill] (-0.707,1.707) circle (0.1); \draw [fill] (-0.707,0.707) circle (0.1);
\draw [-,very thick,dashed] (0,2.414) -- (0,3.214);
\draw [-,very thick] (0,3.214) -- (0,4.014);
\draw [-,very thick,dashed] (0,4.014) -- (1,4.014);
\draw [-,very thick] (1,3.214) -- (1,4.014);
\draw [-,very thick,dashed] (1,2.414) -- (1,3.214);
\draw [fill] (0,3.214) circle (0.1); \draw [fill] (1,3.214) circle (0.1);
\draw [fill] (0,4.014) circle (0.1); \draw [fill] (1,4.014) circle (0.1);
\draw [-,very thick,dashed] (1,2.414) -- (1.707,3.121);
\draw [-,very thick] (2.414,3.808) -- (1.707,3.121);
\draw [-,very thick,dashed] (2.414,2.414) -- (1.707,1.707);
\draw [-,very thick] (2.414,2.414) -- (3.121,3.121);
\draw [-,very thick,dashed] (2.414,3.808) -- (3.121,3.121);
\draw [fill] (2.414,3.808) circle (0.1); \draw [fill] (1.707,3.121) circle (0.1);
\draw [fill] (2.414,2.414) circle (0.1); \draw [fill] (3.121,3.121) circle (0.1);
\draw [-,very thick,dashed] (0,2.414) -- (-0.707,3.121);
\draw [-,very thick] (-1.414,3.808) -- (-0.707,3.121);
\draw [-,very thick,dashed] (-1.414,2.414) -- (-0.707,1.707);
\draw [-,very thick] (-1.414,2.414) -- (-2.121,3.121);
\draw [-,very thick,dashed] (-1.414,3.808) -- (-2.121,3.121);
\draw [fill] (-1.414,3.808) circle (0.1); \draw [fill] (-0.707,3.121) circle (0.1);
\draw [fill] (-1.414,2.414) circle (0.1); \draw [fill] (-2.121,3.121) circle (0.1);
\end{scope}
\end{tikzpicture}\\
\hline
\end{tabular}
\caption{The booster switchings for classes \textBipm}
\label{tab:TypeIV}
\end{center}
\end{table}

As shown in Table~\ref{tab:TypeIV}, Type~\textIIapm\ switchings are
described by an 8-tuple $\boldsymbol{v}=(v_0,\ldots, v_7)$, while Type~\textIIbpm\
switchings are described by 8-tuple $(v_0,\ldots, v_7)$ together with 8 additional
vertices, and Type~\textIIcpm\ switchings are described by an 8-tuple
$(v_0,\ldots, v_7)$ together with 12 additional vertices, providing the
additional edges used to perform the switching.
We denote the sequence of these additional vertices 
by $\boldsymbol{y}$, where the vertices are arranged in some prescribed
order: see Figure~\ref{whynot} for Type~\IVc.
An inverse Type~\textIIbpm\ switching is described by choosing the
8-tuple $\boldsymbol{v}$ and an 8-tuple $\boldsymbol{y}$ of additional
vertices,
while an inverse Type~\textIIcpm\ switching is described by choosing the
8-tuple $\boldsymbol{v}$ and a 12-tuple $\boldsymbol{y}$ of additional
vertices.

Suppose that a Type~\textIIbpm\ or Type~\textIIcpm\ switching based on the
8-tuple $\boldsymbol{v}$ creates a graph $G'$.
We refer to the subgraph of $G'$ formed by vertices in $\boldsymbol{v}$
as an \emph{octagon}.   
If an 8-tuple $\boldsymbol{v}=(v_0,\ldots,v_7)$ in $G$ can be combined 
with an $8$-tuple (respectively, $12$-tuple) 
of additional vertices $\boldsymbol{y}$
on which an inverse Type IIb$\pm$, 
(respectively, inverse Type IIc$\pm$) switching can be performed, then
we call $\boldsymbol{v}$ an octagon of Type IIb$\pm$ 
(respectively, Type IIc$\pm$). 
The switching operation is denoted by $(G,\boldsymbol{v},\boldsymbol{y})\mapsto G'$.
Note that octagons of different types induce different subgraph structures and 
(non-)edge colour restrictions.
Each octagon which can result from a Type~\textIIbpm\ 
(respectively, Type~\textIIcpm)
switching is not created equally often, due to the varying number of ways
to select the additional vertices needed to perform
the inverse switching.
Thus we introduce another sort of rejection, called \emph{pre-b-rejection}, 
to equalise the frequency of the creation of each octagon, given a switching 
type $\tau\in\{\IIbpm,\, \IIcpm\}$.

Given $G\in \strata_i$, $\tau\in \{ \IIbpm,\, \IIcpm\}$ 
and an octagon induced by the 8-tuple $\boldsymbol{v}$,
let ${\widehat b_{\tau}}(G,\boldsymbol{v})$ be the number of ways to choose 
the sequence of additional vertices $\boldsymbol{y}$ (the length of
$\boldsymbol{y}$ depends on $\tau$) so that an inverse Type~$\tau$ switching 
can be performed using $\boldsymbol{v}$ and $\boldsymbol{y}$.
Define
\begin{eqnarray}
{\widehat \LB}_{\IIbpm}(i)&=&(dn-2i-12)^4-6(dn)^3d^2-6(dn)^3\Delta d;\\
{\widehat \LB}_{\IIcpm}(i)&=&(dn-2i-14)^6-9(dn)^5d^2-9(dn)^5\Delta d.
\end{eqnarray}


\begin{lemma}\label{lem:IVa}
Let \emph{$\tau\in\{\IVa,\, \IVc\}$}  and $G\in \strata_i$.
For any octagon $\boldsymbol{v} = (v_0,\ldots,v_7)$ in $G$ that can be 
created by a type $\tau$ switching,
\[
{\widehat \LB}_{\tau}(i) \le {\widehat b_{\tau}}(G, \boldsymbol{v}) ={\widehat \LB}_{\tau}(i) (1+O((d+\Delta)/n)).
\]
\end{lemma}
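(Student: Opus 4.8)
The plan is to estimate $\widehat{b}_\tau(G,\boldsymbol{v})$ by a direct enumeration of the admissible sequences $\boldsymbol{y}$ of additional vertices, with the octagon $\boldsymbol{v}$ held fixed inside $G$. The point of fixing $\boldsymbol{v}$ is that $\widehat{b}_\tau(G,\boldsymbol{v})$ then counts only \emph{local} configurations hanging off the eight octagon vertices, and the count uses nothing about $G$ beyond its $d$-regularity and the fact that $\overline{H_n}$ has maximum degree $\Delta$. In particular, every quantity entering the count (the number $dn$ of ordered edges, the number $2i$ of ordered red edges, the degree $d$ of a vertex, and the bound $\Delta$ on the number of red pairs at a vertex) is determined exactly, or bounded uniformly, by these hypotheses. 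This is why the estimate can hold \emph{pointwise} for every $G\in\strata_i$, with no averaging as was needed in Lemma~\ref{lem:bb}: here no configuration whose frequency varies across $\strata_i$ (such as two red edges meeting at a vertex) ever enters the count.

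I would treat $\tau=\IVa$ in detail; the case $\tau=\IVc$ is identical except for the number of gadgets. Reading the appropriate row of Table~\ref{tab:TypeIV}, an inverse Type~$\IVa$ switching attaches two four-vertex gadgets to the octagon (three gadgets for $\IVc$), and completing each gadget amounts to selecting two ordered edges of $G$ whose endpoints lie among the additional vertices. Thus the main term counts $4$ ordered black edges of $G$ (six for $\IVc$). There are $dn$ ordered edges in total, of which $2i$ are red, so each black edge may be chosen in $dn-2i$ ways; excluding the bounded number of coincidences with the octagon vertices and with previously chosen additional vertices reduces each factor to the quantity $dn-2i-12$ appearing in the definition of $\widehat{\LB}_\tau(i)$ for $\tau=\IVa$ (respectively $dn-2i-14$ for $\IVc$), giving the leading term $(dn-2i-12)^4$ (respectively $(dn-2i-14)^6$).

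For the lower bound I would subtract from this leading term an upper bound on the number of selections that violate one of the imposed constraints; by a union bound it suffices to bound, for each single defect, the number of selections exhibiting it. Exactly two kinds of defect occur. First, a pair of vertices required to form a non-edge may instead be joined by an edge of $G$; this forces one endpoint to be a neighbour of an already-chosen vertex, so one free factor of $dn$ is replaced by $d^2$, contributing at most $6(dn)^3d^2$ (respectively $9(dn)^5d^2$), where the multiplicity counts the black non-edges of the gadgets. Second, a pair required to be a \emph{black} non-edge may instead be red; since each vertex lies in at most $\Delta$ red pairs, a free factor of $dn$ is replaced by $\Delta d$, contributing at most $6(dn)^3\Delta d$ (respectively $9(dn)^5\Delta d$). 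Subtracting these yields precisely $\widehat{\LB}_\tau(i)$, proving the inequality $\widehat{\LB}_\tau(i)\le\widehat{b}_\tau(G,\boldsymbol{v})$.

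Finally, for the matching relative error I would use $i\le\imax=\tfrac{2}{3}d\Delta$, so that $2i/(dn)=O(\Delta/n)$ and the leading term satisfies $(dn-2i-O(1))^k=(dn)^k\bigl(1+O((d+\Delta)/n)\bigr)$, with $k=4$ for $\IVa$ and $k=6$ for $\IVc$. The two subtracted families are each $O\bigl((dn)^{k-1}(d^2+\Delta d)\bigr)=O\bigl((dn)^k(d+\Delta)/n\bigr)$, so $\widehat{\LB}_\tau(i)=(dn)^k\bigl(1+O((d+\Delta)/n)\bigr)$; and since $\widehat{b}_\tau(G,\boldsymbol{v})$ is trivially at most $(dn)^k$, the two-sided bound $\widehat{b}_\tau(G,\boldsymbol{v})=\widehat{\LB}_\tau(i)\bigl(1+O((d+\Delta)/n)\bigr)$ follows. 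I expect the only genuinely delicate part to be reading the exact edge/non-edge incidences and colour constraints of each gadget off the figures in Table~\ref{tab:TypeIV}, and thereby pinning down the multiplicities ($6$ and $9$) and the coincidence constants; once these are fixed, the estimation is routine and parallels the counting in Lemmas~\ref{lem:bb} and~\ref{lem:bB2}.
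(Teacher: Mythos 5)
Your proposal is correct and follows essentially the same route as the paper's own proof: fix the octagon, count the gadget completions as ordered black edges to get the leading terms $(dn-2i-12)^4$ and $(dn-2i-14)^6$, subtract the two defect families (a designated non-edge being present, at most $6(dn)^3d^2$ resp.\ $9(dn)^5d^2$ choices, or being a red non-edge, at most $6(dn)^3\Delta d$ resp.\ $9(dn)^5\Delta d$), and pair the resulting lower bound ${\widehat \LB}_{\tau}(i)$ with the trivial upper bound $(dn)^4$ resp.\ $(dn)^6$ to obtain the relative error $O((d+\Delta)/n)$ using $i\le \imax=\frac{2}{3}d\Delta$. The one bookkeeping nuance is that the paper's ``$-12$'' (resp.\ ``$-14$'') accounts only for the chosen edges being distinct as edges from each other and from the black octagon edges (six, resp.\ seven, forbidden edges, i.e.\ twelve, resp.\ fourteen, ordered pairs), whereas you read it as also covering vertex coincidences, which actually cost $O(d)$ per factor rather than $O(1)$ --- but this $O\bigl((dn)^{k-1}d\bigr)$ correction is of lower order than the $d^2$ defect term and is absorbed in exactly the same way as in the paper, so the conclusion is unaffected.
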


\begin{proof}
We only prove the result for $\tau=\IVa$, as the argument for $\tau=\IVc$ is similar
and by symmetry, the same bounds will hold for $\tau=\IVd$ and $\tau=\IVf$, 
respectively.

Let $\boldsymbol{v}=(v_0,\ldots, v_7)$ be a fixed 8-tuple which gives
rise to the octagon shown in Figure~\ref{whynot}.
We bound the number of ways to choose an 8-tuple 
$\boldsymbol{y} = (y_1,y_2,y_3,y_4,y_5,y_6,y_7,y_8)$ of
additional vertices
so that dashed lines in Figure~\ref{whynot} correspond to black non-edges in $G$.
\begin{figure}[ht!]
\begin{center}
\begin{tikzpicture}[scale=0.9]
\draw [-,very thick,red] (0,2.414) -- (1,2.414); 
\node [above] at (0.5, 2.414) {r};
\draw [-,very thick,dashed] (1.707,1.707) -- (1.707,0.707);
\node [above] at (1.5,2.0) {r};  
\draw [-,very thick,dashed] (-0.707,1.707) -- (-0.707,0.707);
\draw [-,very thick,dashed] (0,0) -- (1,0);
\draw [-,very thick] (0,0) -- (-0.707,0.707);
\draw [-,very thick] (-0.707,1.707) -- (0,2.414);
\draw [-,very thick,red] (1,2.414) -- (1.707,1.707); 
\draw [-,very thick] (1.707,0.707) -- (1,0);
\draw [fill] (0,0) circle (0.1); \draw [fill] (1,0) circle (0.1);
\draw [fill] (1.707,0.707) circle (0.1); \draw [fill] (1.707,1.707) circle (0.1);
\draw [fill] (1,2.414) circle (0.1); \draw [fill] (0,2.414) circle (0.1);
\draw [fill] (-0.707,1.707) circle (0.1); \draw [fill] (-0.707,0.707) circle (0.1);
\draw [-,very thick,dashed] (0,2.414) -- (0,3.214);
\draw [-,very thick] (0,3.214) -- (0,4.014);
\draw [-,very thick,dashed] (0,4.014) -- (1,4.014);
\draw [-,very thick] (1,3.214) -- (1,4.014);
\draw [-,very thick,dashed] (1,2.414) -- (1,3.214);
\draw [fill] (0,3.214) circle (0.1); \draw [fill] (1,3.214) circle (0.1);
\draw [fill] (0,4.014) circle (0.1); \draw [fill] (1,4.014) circle (0.1);
\draw [-,very thick,dashed] (1,2.414) -- (1.707,3.121);
\draw [-,very thick] (2.414,3.808) -- (1.707,3.121);
\draw [-,very thick,dashed] (2.414,2.414) -- (1.707,1.707);
\draw [-,very thick] (2.414,2.414) -- (3.121,3.121);
\draw [-,very thick,dashed] (2.414,3.808) -- (3.121,3.121);
\draw [fill] (2.414,3.808) circle (0.1); \draw [fill] (1.707,3.121) circle (0.1);
\draw [fill] (2.414,2.414) circle (0.1); \draw [fill] (3.121,3.121) circle (0.1);
\node [left] at (0.0,2.514) {$v_0$};
\node [below] at (1,2.314) {$v_1$};
\node [right] at (1.807,1.507) {$v_2$};
\node [right] at (1.807,0.507) {$v_3$};
\node [below] at (1,-0.1) {$v_4$};
\node [below] at (0,-0.1) {$v_5$};
\node [left] at (-0.807,0.707) {$v_6$};
\node [left] at (-0.807,1.707) {$v_7$};
\node [left] at (0.0,3.314) {$y_1$};
\node [left] at (0.0,4.114) {$y_2$};
\node [left] at (1.0,3.314) {$y_3$};
\node [right] at (1.0,4.114) {$y_4$};
\node [right] at (1.7,2.814) {$y_5$};
\node [left] at (2.5,4.014) {$y_6$};
\node [right] at (2.3,2.114) {$y_7$};
\node [right] at (3.1,3.114) {$y_8$};
\end{tikzpicture}
\caption{Choosing the additional vertices for an inverse Type~\IVc\ switching }
\label{whynot}
\end{center}
\end{figure}
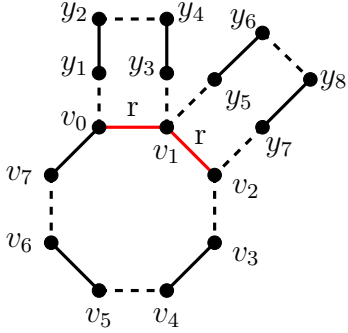

The upper bound $(dn)^4$ is obvious. For the lower bound, first notice that this 
number is at least $(dn-2i - 12)^4$, as the 4 extra edges involved in the 
inverse 
switching are black, and are distinct from each other and from the 
3 black edges in the octagon
Further, we need to 
subtract the number of choices where at least one defect appears. 
There are at most $6\cdot (dn)^3d^2$ choices where
one designated non-edge (such as $v_0y_1$ or $y_2y_4$)
is actually present,  and at most
$6\cdot (dn)^3 d\Delta$ choices where one designated non-edge is red. 
Subtracting these counts gives the required lower bound.  
\end{proof}

We will specify $\UB_{\tau}(i)$ for $\tau\in\{\IIapm,\,\IIbpm,\IIcpm\}$ 
in~(\ref{UBIIa})--(\ref{UBIIc}). It is trivial to see that 
$f_{\tau}(G)\le \UB_{\tau}(i)$  for each such $\tau$ and for $G\in \strata_i$. 
These types of switchings are performed so rarely that the trivial lower bound 
$f_{\tau}(G)\ge 0$ is sufficient for our analysis: see the proof of Lemma~\ref{lem:rejections}.

\bigskip

\noindent {\bf Pre-b-rejection} \label{Pre-state}

When we count the number of inverse Class~\textBipm\ switchings applicable to 
$G\in\strata_i$, we count the number of choices of $(v_0,\ldots,v_7)$ that are 
allowed to be created by a Class~\textBipm\ switching. However, some types of 
switchings create structures with more vertices than those in an octagon. 
For instance, let $\boldsymbol{v}$ be an octagon of type IIb+ and let
$\boldsymbol{x}=(x_1,\ldots,x_4)$ be the four extra edges that are created by a 
Type \IVa\ switching.  We can consider $(G, \boldsymbol{v}, \boldsymbol{x})$ as
a {\em pre-state} of $(G,\boldsymbol{v})$. Each octagon $\boldsymbol{v}$ in $G$ corresponds to 
exactly $ {\widehat b_{\tau}}(G, \boldsymbol{v})$ pre-states, and 
${\widehat b_{\tau}}(G, \boldsymbol{v}) \approx {\widehat \LB}_{\tau}(i)$ by 
Lemma~\ref{lem:IVa}. 
By carefully designing the pre-b-rejection scheme, we can ensure that each 
octagon $\boldsymbol{v}$ in $G$ is created equally often if each of 
its pre-states are created equally often.

When a Type~$\tau$ switching converting $G$ to $G'$ is chosen, 
corresponding to a valid 8-tuple $\boldsymbol{v}$, we will 
reject the algorithm and restart with probability
\[
1-\frac{{\widehat \LB}_{\tau}(i)}{{\widehat b_{\tau}}(G, \boldsymbol{v})}.
\]
This restart will be called a \emph{pre-b-rejection}.

The pre-b-rejection is incorporated in the formal definition of the algorithm in Section~\ref{sec:alg}. We close this section by bounding $b_{\alpha}(G)$ for 
$\alpha\in\{\Bipm\}$.

Define
\begin{equation}
\LB_{\alpha}(i)=2i(\Delta-1)(d-1) (dn-2i-10d)^2 - 6i(\Delta-1) d^3n(d+\Delta) \quad \mbox{for $\alpha\in\{\Bipm\}$.} \label{LBB1}
\end{equation}

\begin{lemma}\label{lem:LBB1}
For any $G\in\strata_i$, and \emph{$\alpha\in\{\Bipm\}$},
\[
\LB_{\alpha}(i)\le b_{\alpha}(G)= \LB_{\alpha}(i)\left(1+O\left(\frac{1}{d}+\frac{d+\Delta}{n}\right)\right).
\]
\end{lemma}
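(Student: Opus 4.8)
The plan is to count the Class~\Bi\ octagons of $G$ directly. The key point, and the whole reason the boosters were introduced, is that although the individual contributions of the various switching types can vary wildly (indeed be zero, as in Remark~\ref{bad-example}), their \emph{sum} is pinned down by regularity alone, so that a \emph{deterministic} two-sided bound holds for every $G\in\strata_i$, with no averaging needed as in Lemma~\ref{lem:bb}.

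First I would observe that an inverse Class~\Bi\ switching produces $G$ exactly when $G$ contains an octagon $(v_0,\ldots,v_7)$ in which $v_0v_7,v_1v_2,v_3v_4,v_5v_6$ are present and $v_0v_1,v_2v_3,v_4v_5,v_6v_7$ are absent, with $v_1v_2$ a red edge and $v_0v_1$ a red non-edge. Reading the path $v_7\,v_0\,v_1\,v_2$, the colours of $v_7v_0$ and $v_0v_1$ split these octagons into exactly the four cases handled by the four types in Class~\Bi: Type~I has $v_7v_0$ black and $v_0v_1$ a red non-edge; Type~\textIIapm\ has $v_7v_0$ red and $v_0v_1$ a red non-edge; Type~\textIIbpm\ has $v_7v_0$ black and $v_0v_1$ a red edge; Type~\textIIcpm\ has $v_7v_0$ red and $v_0v_1$ a red edge. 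Since these three colours determine the type uniquely there is no double counting, so $b_{\Bi}(G)$ is exactly the number of such octagons. The extra vertices needed to realise the Type~\textIIbpm\ and \textIIcpm\ switchings do not alter this count: by Lemma~\ref{lem:IVa} we have ${\widehat b_\tau}(G,\boldsymbol v)>0$, so every octagon of the correct shape is realisable, and the variation in the number of completions is absorbed by pre-b-rejection rather than by $b_{\Bi}$.

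Next I would establish the deterministic bound on the number of cores $(v_7,v_0,v_1,v_2)$, summed over all four types, where $v_7v_0$ may be any edge of $G$ at $v_0$ and $v_0v_1$ any red incidence at $v_1$. Choose an ordered red edge $(v_1,v_2)$ in $2i$ ways; choose $v_0$ so that $v_0v_1$ is red (edge or non-edge), which gives exactly $\Delta-1$ choices since the host graph is $(n-\Delta-1)$-regular and $v_1v_2$ is already one of the $\Delta$ red incidences at $v_1$; then choose $v_7$ to be a neighbour of $v_0$, giving $d-1$ or $d$ choices according to whether $v_0v_1$ is present or not. Up to negligible distinctness corrections this yields a count lying between $2i(\Delta-1)(d-1)$ and $2i(\Delta-1)d$ for \emph{every} $G\in\strata_i$, since it uses only that $G$ has $i$ red edges, is $d$-regular, and has every vertex meeting exactly $\Delta$ red edges. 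This $d$ versus $d-1$ ambiguity is precisely the source of the $O(1/d)$ error term.

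Finally I would bound the completions. Given a core, the edges $v_3v_4$ and $v_5v_6$ are chosen among the present edges of $G$ avoiding the $O(d)$ vertices and edges already used, giving at least $(dn-2i-10d)^2$ ordered choices; from this one subtracts those choices in which one of $v_2v_3,v_4v_5,v_6v_7$ is present or red, or one of $v_3v_4,v_5v_6$ is red, a deterministic defect of $O(d^2n(d+\Delta))$ per core that sums (using $i\le\imax$) to at most $6i(\Delta-1)d^3n(d+\Delta)$. Multiplying the core lower bound $2i(\Delta-1)(d-1)$ by $(dn-2i-10d)^2$ and subtracting this total defect gives $b_{\Bi}(G)\ge\LB_{\Bipm}(i)$; the matching upper bound $b_{\Bi}(G)\le 2i(\Delta-1)d\,(dn)^2$ then yields $b_{\Bi}(G)=\LB_{\Bipm}(i)\bigl(1+O(1/d+(d+\Delta)/n)\bigr)$, the $(d+\Delta)/n$ arising from the completion defect and from replacing $(dn-2i-10d)^2$ by $(dn)^2$. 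The bounds for Class~\textBvii\ follow by the mirror labelling of Figure~\ref{fig:vertex-labels}. I expect the main obstacle to be keeping the defect bookkeeping clean enough that the subtracted terms assemble into exactly the $-6i(\Delta-1)d^3n(d+\Delta)$ appearing in $\LB_{\Bipm}(i)$, while being careful that the core count remains a genuinely deterministic (not average-case) quantity for each fixed $G$.
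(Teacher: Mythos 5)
Your proposal is correct and takes essentially the same route as the paper's proof: the paper likewise observes that summing over the four switching types associated with Class~\textBipm\ makes the count of cores $(v_7,v_0,v_1,v_2)$ deterministically pinned between $2i(\Delta-1)(d-1)$ and $2i(\Delta-1)d$ (using only $d$-regularity of $G$, $\Delta$-regularity of $\overline{H_n}$, and the red-edge count $i$), then multiplies by at least $(dn-2i-10d)^2$ completions and subtracts the same defect term $6i(\Delta-1)d^3n(d+\Delta)$, with the trivial upper bound $2i(\Delta-1)d(dn)^2$ giving the stated relative error $O(1/d+(d+\Delta)/n)$. The only blemish is a slip in your opening description of the octagon, where you require $v_0v_1$ to be absent even though your own (correct) type decomposition has $v_0v_1$ present as a red edge for Types \textIIbpm\ and \textIIcpm; since your subsequent counting handles both presence states correctly (the $d$ versus $d-1$ choice for $v_7$), nothing substantive is affected.
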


\begin{proof}
The number of ways to choose $v_7$, $v_0$, $v_1$ and $v_2$ is between $2i(\Delta-1)(d-1)$ and $2i(\Delta-1)d$. The number of ways to choose the other four vertices is at least $(dn-2i-10d)^2$ so that there are no vertex coincidence and both $v_3v_4$ and $v_5v_6$ are black. We subtract the choices where $v_2v_3$ or $v_4v_5$ or $v_6v_7$ is present in $G$, or is a red non-edge. There are at most $3\cdot 2i(\Delta-1)d (d^2 (dn) + \Delta d(dn))$ such choices. This verifies the desired lower bound. The upper bound $2i(\Delta-1)d(dn)^2$ is trivial, which yields the required relative error.
\end{proof}

\subsubsection{Classes \textCpm}

Now consider Class \textCpm. As we will show later, the probability that a Type I switching is in Class \textCpm\ is very small. Thus, we only need a rather rough lower bound on the number of inverse Class \textCpm\ switchings, so that the probability of a b-rejection is not too close to~1. However, there are very rare graphs in $\strata_i$ that cannot be created by a Type I Class \textCpm\ switching. For instance, this
may occur if $d=\Delta$ and the set of all red edges in $G$ form a red $d$-regular subgraph. Then $G$ does not contain the following structure, 
\begin{center}
\begin{tikzpicture}
\draw [very thick,-,dashed,red] (0,0) -- (1,0);  
\draw [very thick,-,dashed,red] (2,0) -- (3,0); \draw [very thick,-,red] (1,0) -- (2,0); 
\draw [very thick,-] (-1,0) -- (0,0); \draw [very thick,-] (3,0) -- (4,0);
\node [above] at (0.5,0.1) {r};  
\node [above] at (1.5,0.1) {r};  
\node [above] at (2.5,0.1) {r};  
\node [above] at (-0.5,0.1) {b};  
\node [above] at (3.5,0.1) {b};  
\draw [fill] (0,0) circle (0.1); \draw [fill] (1,0) circle (0.1);
\draw [fill] (2,0) circle (0.1); \draw [fill] (3,0) circle (0.1);
\draw [fill] (-1,0) circle (0.1); \draw [fill] (4,0) circle (0.1);
\end{tikzpicture}
\end{center}
and thus cannot be created by a Type~I switching. In this case, that the probability of a b-rejection 
would equal~1 due to the existence of such graphs.
In order to reduce the probability of a b-rejection, we introduce a new 
type of switching, namely Type \Va\ for Class C+ and Type \textVb\ 
for Class C$-$,  that boost the probability of graphs which
contain the following structure:
\begin{center}
\begin{tikzpicture}
\draw [very thick,-,dashed,red] (0,0) -- (1,0); \draw [very thick,-,dashed,red] (2,0) -- (3,0); 
\draw [very thick,-,dashed,red] (1,0) -- (2,0); 
\draw [very thick,-] (-1,0) -- (0,0); \draw [very thick,-] (3,0) -- (4,0);
\node [above] at (0.5,0.1) {r};  
\node [above] at (1.5,0.1) {r};  
\node [above] at (2.5,0.1) {r};  
\node [above] at (-0.5,0.1) {b};  
\node [above] at (3.5,0.1) {b};  
\draw [fill] (0,0) circle (0.1); \draw [fill] (1,0) circle (0.1);
\draw [fill] (2,0) circle (0.1); \draw [fill] (3,0) circle (0.1);
\draw [fill] (-1,0) circle (0.1); \draw [fill] (4,0) circle (0.1);
\end{tikzpicture}
\end{center}
It turns out that for any $G\in\strata_i$, the number of choices of 6-tuples of vertices $(x_1,\ldots,x_6)$ such that $x_1x_2$ and $x_5x_6$ are black edges in $G$, $x_2x_3$ and $x_4x_5$ are red non-edges, and $x_3x_4$ is either a red edge or a red non-edge, is always sufficiently concentrated. See Lemma~\ref{lem:LBC} for a precise bound. This is why we boost the second structure, 
to transform a highly varying count into a well-concentrated count.

The Type~\textIIIpm, Class~\textCpm\ 
switchings are shown in Table~\ref{tab:TypeV}.
\begin{table}[ht!]
\begin{center}
\renewcommand{\arraystretch}{1.2}
\begin{tabular}{|r|r|c|}
\hline
type, class &  action &  the switching \\
\hline
\textIIIpm,\, \textCpm  & $\mathcal{S}_{i} \rightarrow \mathcal{S}_i$ &
\begin{tikzpicture}[scale=0.7]
\node [right] at (1.3,2.2) {r};  
\node [left] at (-0.3,2.2) {r};  
\draw [-,very thick,dashed,red] (0,2.414) -- (1,2.414); 
\node [above] at (0.5, 2.514) {r};
\draw [-,very thick] (1.707,1.707) -- (1.707,0.707);
\draw [-,very thick] (-0.707,1.707) -- (-0.707,0.707);
\draw [-,very thick] (0,0) -- (1,0);
\draw [-,very thick,dashed] (0,0) -- (-0.707,0.707);
\draw [-,very thick,dashed,red] (-0.707,1.707) -- (0,2.414); 
\draw [-,very thick,dashed,red] (1,2.414) -- (1.707,1.707); 
\draw [-,very thick,dashed] (1.707,0.707) -- (1,0);
\draw [-,very thick,dashed] (1.707,0.707) -- (1,0);
\draw [fill] (0,0) circle (0.1); \draw [fill] (1,0) circle (0.1);
\draw [fill] (1.707,0.707) circle (0.1); \draw [fill] (1.707,1.707) circle (0.1);
\draw [fill] (1,2.414) circle (0.1); \draw [fill] (0,2.414) circle (0.1);
\draw [fill] (-0.707,1.707) circle (0.1); \draw [fill] (-0.707,0.707) circle (0.1);
\draw [->,line width = 1mm] (3,0.866) -- (4,0.866);
\begin{scope}[shift={(6,0)}]
\draw [-,very thick,dashed,red] (0,2.414) -- (1,2.414); 
\node [above] at (0.5, 2.514) {r};
\node [right] at (1.3,2.2) {r};  
\node [left] at (-0.3,2.2) {r};  
\draw [-,very thick] (1.707,1.707) -- (1.707,0.707);
\draw [-,very thick] (-0.707,1.707) -- (-0.707,0.707);
\draw [-,very thick] (0,0) -- (1,0);
\draw [-,very thick,dashed] (0,0) -- (-0.707,0.707);
\draw [-,very thick,dashed,red] (-0.707,1.707) -- (0,2.414); 
\draw [-,very thick,dashed,red] (1,2.414) -- (1.707,1.707); 
\draw [-,very thick,dashed] (1.707,0.707) -- (1,0);
\draw [fill] (0,0) circle (0.1); \draw [fill] (1,0) circle (0.1);
\draw [fill] (1.707,0.707) circle (0.1); \draw [fill] (1.707,1.707) circle (0.1);
\draw [fill] (1,2.414) circle (0.1); \draw [fill] (0,2.414) circle (0.1);
\draw [fill] (-0.707,1.707) circle (0.1); \draw [fill] (-0.707,0.707) circle (0.1);
\end{scope}
\end{tikzpicture}\\
\hline
\end{tabular}
\caption{The booster switchings for Classes \textCpm}
\label{tab:TypeV}
\end{center}
\end{table}

Unusually, the Type~\textIIIpm\ switchings do not perform any switch of edges,
 except for designating an 8-tuple of vertices satisfying certain constraints, 
as shown in Table~\ref{tab:TypeV}.
They can be viewed as adding a small ``do nothing'' probability
to the algorithm. As we will see later, the probability of ever
performing a Class~\textCpm\ switching is extremely small.

As before, although type III$+$ and III$-$ switchings have the same definition, they are booster switchings for classes C$+$ and C$-$ respectively, and thus have to be categorised into different types.

Define
\begin{align}
\qquad \LB_{\alpha}(i)&=d^3\Delta^3 n^2 (1- 8(d+\Delta)/n) 
    & 
\text{for $\alpha\in\{\Cpm\}$},\qquad  \label{LBC}\\
\qquad \UB_{\tau}(i)&=\Delta^3 d^3 n^2  & \text{for $\tau\in\{\IIIpm\}$}.
\qquad
\end{align}

\begin{lemma}\label{lem:LBC}
For each $G\in \strata_i$ and for \emph{$\tau\in \{ \IIIpm\}$},
\[
\UB_{\tau}(i)(1-8(d+\Delta)/n)\le f_{\tau}(G)\le \UB_{\tau}(i).
\]
For each $G\in\strata_i$ and for \emph{$\alpha\in\{\Cpm\}$},
\[
\LB_{\alpha}(i) \le b_{\alpha}(G) \le d^3\Delta^3 n^2.
\]
\end{lemma}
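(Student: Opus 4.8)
The plan is to count, for a fixed $G\in\strata_i$, the labelled vertex configurations realising each diagram, treating the upper bounds directly and reserving the work for the lower bounds. First I would pin down what the two quantities count. Since a Type~\textIIIpm\ switching leaves $G$ unchanged, $f_{\tau}(G)$ is the number of valid $8$-tuples $(v_0,\ldots,v_7)$ realising the left diagram of Table~\ref{tab:TypeV}: a red non-edge $3$-path $v_2v_1v_0v_7$, black edges $v_2v_3$ and $v_6v_7$ at its ends, and an appendage made of a black edge $v_4v_5$ joined to the rest through the black non-edges $v_3v_4,\,v_5v_6$. Abstractly this is a path $x_1x_2x_3x_4x_5x_6$ with $x_1x_2,x_5x_6$ black edges of $G$ and $x_2x_3,x_3x_4,x_4x_5$ red non-edges, plus one extra black edge. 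By contrast $b_{\alpha}(G)$ counts \emph{all} inverse Class~\textCpm\ switchings producing $G$, and Class~C consists of the Type~I Class~C switchings together with the Type~\textIIIpm\ boosters: the former give this same shape but with the central edge $x_3x_4$ a red \emph{edge} of $G$, the latter with $x_3x_4$ a red non-edge. Hence $b_{\alpha}(G)$ equals the total count $T(G)$ of configurations with $x_1x_2,x_5x_6$ black edges, $x_2x_3,x_4x_5$ red non-edges, and central connection $x_3x_4$ an \emph{arbitrary} edge of $\overline{H_n}$ (present or absent), plus the appendage.

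The upper bounds are immediate: choosing the central red connection in at most $\Delta n$ ways, each flanking red non-edge in at most $\Delta$, each terminal black edge in at most $d$, and the appendage edge in at most $dn$ ways gives $f_{\tau}(G)\le\Delta^3 d^3 n^2=\UB_{\tau}(i)$ and $b_{\alpha}(G)\le d^3\Delta^3 n^2$. For the lower bounds I would run inclusion--exclusion, with the regularity of $H_n$ as the essential ingredient. Since $\overline{H_n}$ is exactly $\Delta$-regular there are exactly $\Delta n$ directed edges of $\overline{H_n}$, so in $T(G)$ the central connection ranges over a set of \emph{fixed} size; writing $r(v)$ for the number of red edges of $G$ at $v$, the red non-edges at $v$ number $\Delta-r(v)$, and $\sum_v r(v)=2i$ with $i\le\tfrac23 d\Delta$. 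Summing over the $\Delta n$ central choices the product $(\Delta-r(v_0)-O(1))(\Delta-r(v_1)-O(1))$ for the two flanking red non-edges and using $\sum_{uv\in\overline{H_n}}r(u)=O(i\Delta)$, the red part is $\Delta^3 n$ up to corrections; multiplying by the $(d-O(1))^2$ and $\big(dn-O(i+d^2)\big)$ ways of attaching the terminal black edges and the appendage yields $b_{\alpha}(G)\ge\LB_{\alpha}(i)$. The bound for $f_{\tau}(G)$ is the same computation with $x_3x_4$ restricted to a red non-edge, which discards only the $O(i\Delta^2 d^3 n)=O\big((d/n)\Delta^3 d^3 n^2\big)$ configurations having $x_3x_4$ a red edge, an amount swallowed by the error.

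The main obstacle is exactly this lower-bound bookkeeping: one must verify that every way a tuple can fail to be valid --- a forbidden vertex coincidence among $v_0,\ldots,v_7$ (only $v_2=v_7$ being permitted), a designated black non-edge that is actually present or red, or a designated red non-edge that is actually a red edge --- contributes only a term of relative size $O((d+\Delta)/n)$, so that it fits inside the factor $1-8(d+\Delta)/n$. The structural point that makes this possible, and the reason the Type~\textIIIpm\ boosters are introduced at all, is that the inverse Type~I Class~\textCpm\ count by itself (central edge a genuine red edge) can be as small as zero for exceptional $G$ --- for instance when the red edges of $G$ form a $\Delta$-regular subgraph --- and so is far from concentrated; only after adding the red non-edge boosters does the central connection range over all $\Delta n$ edges of $\overline{H_n}$, so that the uniform $\Delta$-regularity converts an erratic quantity into the well-concentrated $T(G)$ with matching upper and lower bounds.
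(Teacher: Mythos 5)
Your proposal is correct and takes essentially the same route as the paper: the same sequential counting for both upper bounds (central red pair in at most $\Delta n$ ways, flanking red non-edges in at most $\Delta^2$, terminal black edges in $d^2$, appendage in at most $dn$), and an inclusion--exclusion lower bound over the same defect families, using $i\le\frac{2}{3}d\Delta$ and the exact $\Delta$-regularity of $\overline{H_n}$, with your observation about the combined concentrated count being precisely the paper's stated motivation for the Type~III boosters. The only organizational difference is the direction: you lower-bound the combined count $T(G)$ first and subtract the $O(i\Delta^2 d^3 n)$ configurations whose central pair is a red edge to recover $f_{\tau}(G)$, whereas the paper lower-bounds $f_{\mathrm{III}\pm}(G)$ directly and then notes $b_{\mathrm{C}\pm}(G)\ge f_{\mathrm{III}\pm}(G)$; the estimates, including the constant $8$, come out the same either way.
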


\begin{proof}
 We only discuss the case $\tau=\Va$, as the case $\tau=\Vb$ is symmetric. 
There are at most $\Delta n-2i\le \Delta n$ ways to choose $(v_0,v_1)$, 
and then at most $\Delta^2$ ways to choose $(v_2,v_7)$. 
Then there are at most $d^2$ ways to choose $(v_3,v_6)$. 
Finally, there are at most $dn-2i\le dn$ ways to choose $(v_4,v_5)$. 
This gives the required upper bound for $f_{\Va}(G)$. To obtain the lower bound, 
we need to subtract from these $d^2\Delta^2 (\Delta n-2i)(dn-2i)$ choices 
of $(v_0,\ldots, v_7)$ the following cases:
\begin{enumerate}
\item[(a)] $v_1 v_2$ or $v_0v_7$ is a red edge in $G$;
\item[(b)] $v_2v_3$ or $v_6v_7$ is a red edge in $G$;
\item[(c)] $v_3v_4$ or $v_5v_6$ is a edge in $G$.
\end{enumerate} 
The number of choices for (a) is at most $2\cdot 2i d^2 \Delta^2 dn=4id^3\Delta^2 n$.
To see this, there are at most $2i$ ways to fix $v_1$ and $v_2$ if $v_1v_2$ is a red
edge in $G$; then at most $d$ ways to fix $v_3$, 
at most $\Delta^2$ ways to fix $v_0$ and $v_7$, at most $d$ ways to fix $v_6$ 
and finally at most $dn$ ways to fix $v_4$ and $v_5$. The factor of $2$ 
covers the case that $v_2v_7$ is a red edge present in $G$.

The number of choices for (b) is at most $2\cdot 2i \Delta^3 d dn=4i d^2 \Delta^3 n$.

The number of choices for (c) is at most $2\cdot \Delta n \Delta^2 d^4=2\Delta^3 d^4 n$. 

\noindent Hence, we have
\[
f_{\Va}(G)\ge d^2\Delta^2 (\Delta n-2i)(dn-2i)-4id^3\Delta^2 n-4i d^2 \Delta^3 n-2\Delta^3 d^4 n \ge d^3\Delta^3 n^2(1-8(d+\Delta)/n),
\]
as required, since $i<d\Delta$ by~(\ref{def-imaxb}).

Next we bound $b_{\alpha}(G)$ for $\alpha=\Ci$, as the case $\alpha=\Cii$ is 
symmetric. To perform an inverse Class \Ci\ switching, we need to designate 
an 8-tuple $\boldsymbol{v}=(v_0,\ldots, v_7)$ such that either an inverse Type I Class \Ci\ switching can be performed on $\boldsymbol{v}$, or an inverse Type Va switching can be performed on $\boldsymbol{v}$. Note that an inverse type \Ci\ switching is just the same as a type \Ci\ switching. The lower bound on $f_{\Va}(G)$ naturally is a lower bound for $b_{\Ci}(G)$. So immediately we have
$b_{\Ci}(G)\ge \LB_{\Ci}(i)$ as specified in~(\ref{LBC}). It is not hard to see that $d^3\Delta^3n^2$ is an upper bound for $b_{\Ci}(G)$, because there are at most $\Delta n$ ways to fix $v_0$ and $v_1$ (either $v_0v_1$ is present or not present in $G$), and at most $\Delta^2d^2$ ways to fix $v_2,v_3,v_7,v_6$ and then at most $dn$ ways to fix $v_4$ and $v_5$.
\end{proof}

\subsection{The algorithm: \algB}\label{sec:alg}

Now we have defined all types and classes of switchings involved in 
\algB. Figure~\ref{fig:all-switchings} depicts all switchings which produce
an element of $\strata_i$, labelled by their type and class.

\begin{figure}
\begin{center}
\begin{tikzpicture}[scale=0.80]
\draw [very thick] (-0.5,1.5) circle (1.0 and 1.5);
\draw [very thick] (2.0,1.5) circle (1.0 and 1.5);
\draw [very thick] (6,2.0) circle (1.5 and 2.0);
\draw [very thick] (11.0,1.5) circle (1.0 and 1.5);
\draw [very thick] (13.5,1.5) circle (1.0 and 1.5);
\draw [very thick] (16.0,1.5) circle (1.0 and 1.5);
\draw [->, line width=3pt] (2.5,1.3) -- (5.2,1.3);
\node [above] at (3.75,1.3) {I,A};
\node [below] at (3.75,1.2) {I,$\Cpm$};
\draw [->, line width=3pt] (10.5,1.3) -- (6.8,1.3);
\node [above] at (8.75,1.3) {$\IIapm$,$\Bipm$};
\draw [->, line width=3pt] (13.5,2.3) arc (10:175:3.4 and 1.25);
\node [above] at (11.0,3.3) {$\IIbpm$,$\Bipm$};
\draw [<-, line width=3pt] (5.6,2.3) arc (5:170:2.8 and 1.25);
\node [above] at (2.5,3.6) {I,$\Biipm$};
\draw [->, line width=3pt] (16.0,2.5) arc (10:175:4.9 and 2.5);
\node [above] at (12.0,4.6) {$\IIcpm$,$\Bipm$};
\draw [<-, line width=3pt] (6.2,3.3) arc (-70:245:0.7 and 0.9);
\node [above] at (6.0,5.8) {I,$\Bipm$};
\node [above] at (6.0,5.1) {$\IIIpm$,$\Cpm$};
\node [below] at (-0.5,-0.1) {$\strata_{i+2}$};
\node [below] at (2.0,-0.1) {$\strata_{i+1}$};
\node [below] at (6,-0.1) {$\strata_{i}$};
\node [below] at (11.0,-0.1) {$\strata_{i-1}$};
\node [below] at (13.5,-0.1) {$\strata_{i-2}$};
\node [below] at (16.0,-0.1) {$\strata_{i-3}$};
\end{tikzpicture}
\caption{All switchings into $\strata_i$, labelled by type and class}
\label{fig:all-switchings}
\end{center}
\end{figure}
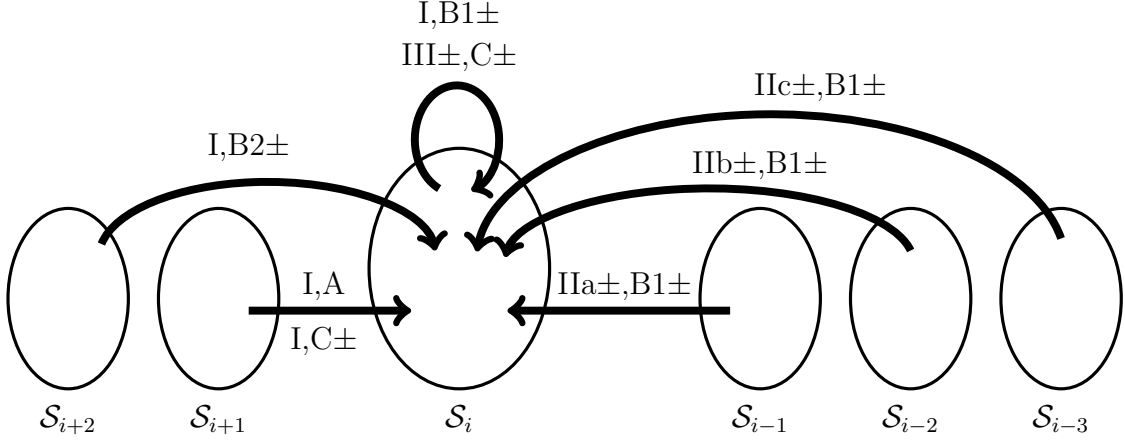


We now describe the algorithm \algB\ formally.
First, \algB\ calls REG to generate a uniformly random 
$d$-regular graph $G_0$ on $\{ 1,2,\ldots, n\}$. If $G_0$ contains more than $\imax$ red 
edges then \algB\ restarts. Otherwise, \algB\ iteratively performs a sequence of 
switching steps. In each switching step, if the current graph $G$ is in $\strata_i$ then
\algB\ chooses a switching type $\tau$ from a set of types $\Gamma$, with 
probability $\rho_{\tau}(i)$. 
Here 
\[ \Gamma=\{\text{I},\,\, \IIapm,\,\,  \IIbpm, \,\, \IIcpm, \,\, \IIIpm\}\]
and we insist only that $\sum_{\tau\in\Gamma}\, \rho_{\tau}(i) \leq 1$ for
all $i\leq i_1$.  With probability $1-\sum_{\tau\in\Gamma}\, \rho_{\tau}(i)$
we perform a rejection called ``t-rejection'' instead of choosing a switching type. 
If no t-rejection is performed then \algB\ chooses a random Type~$\tau$ switching, 
and either performs this chosen switching, or restarts with a small probability 
(the sum of the f-rejection, pre-b-rejection and b-rejection probabilities).
The parameters $\rho_{\tau}(i)$ will be specified in the next section. 
If $i=0$ then a Type I switching is interpreted as outputting the current graph.

To be more specific, let $G_t$ be the graph obtained after $t$ switching
steps, and suppose that $G_t=G\in\strata_i$.
The $(t+1)$'th switching step is composed 
of the following substeps:
\begin{enumerate}
\item[(i)] Choose switching type $\tau\in\Gamma$ with probability $\rho_{\tau}(i)$. If no type is chosen, perform a t-rejection.
\item[(ii)] Assume that no t-rejection was performed. If $i=0$ and $\tau=I$ then output 
the current graph $G$.  Otherwise, choose a random Type~$\tau$ switching $S$ for $G$
and let $G'$ be the graph obtained from $G$ by performing $S$. 
Perform an f-rejection with probability $1-f_{\tau}(G)/\UB_{\tau}(i)$.
\item[(iii)] 
If no f-rejection is performed then perform a pre-b-rejection, if applicable. 
(See the description given above Lemma~\ref{lem:LBB1}.) 
\item[(iv)] If no f-rejection or pre-b-rejection is performed then 
let $\alpha$ be the class of $S$ and suppose that $G'\in\strata_{i'}$. 
Perform a b-rejection with probability 
$1-\LB_{\alpha}(i')/b_{\alpha}(G')$.
\item[(v)] If no b-rejection is performed then set $G_{t+1} = G'$. 
\end{enumerate} 
If any rejection occurs then \algB\ restarts.

Again, this is a Las Vegas algorithm with no deterministic
upper bound on the running time of the algorithm, due to the chance of rejections.
But we will show that the probability of a rejection occurring is small, under the 
assumptions of Theorem~\ref{thm:B}.

\subsection{Uniformity: fixing $\rho_{\tau}(i)$}\label{ss:equalise}

We complete the definition of \algB\ by specifying the parameters $\rho_{\tau}(i)$, 
for $\tau\in \Gamma$.  
Let $\sigma(G)$ denote the expected number of times that $G$ is 
reached by \algB. We will design $\rho_{\tau}(i)$ such that $\sigma(G)=\sigma_i$ for some $\sigma_i$, for every $G\in \strata_i$ and every $0\le i\le \imax$.
A method of designing these parameters is discussed in~\cite[Section 5]{GWSIAM} 
in a general setting. In the rest of this section, we carry out this method and apply 
it to our specific problem.

For convenience, we summarise which switching types occur for each class
in Table~\ref{tab:type-class}.
As usual, a type ending in ``+'' goes with a class ending in ``+'', and
similarly for those ending in ``$-$''.
%
\begin{table}[ht!]
\renewcommand{\arraystretch}{1.2}
\begin{center}
\begin{tabular}{|l|c|}
\hline
Class & Types associated with the given class\\
\hline
A & I\\
\textBipm & I,\, \textIIapm,\, \textIIbpm,\, \textIIcpm \\
\textBiipm & I\\
\textCpm & I, \textIIIpm\\
\hline
\end{tabular}
\caption{Reference table for types and classes.}
\label{tab:type-class}
\end{center}
\end{table}

Below is a list of parameters $\UB_{\tau}(i)$, which are upper bounds on the 
number of ways to perform a switching of each type on a given $G\in\mathcal{S}_i$:
\begin{align}
 \overline{m}_{\text{I}}(i) &= 
 2i(dn)^3\left(1 + 28\left(\frac{(\Delta+d)^2}{n^2} + \frac{1}{n}\right)\right)  - 8i(d-1)^2d^2n^2 - 4i\Delta d^3 n^2 - 4i^2(dn)^2, \nonumber \\
\overline{m}_{\tau}(i) &= 2i \Delta^2 d^3 n, \hspace*{3.9cm}
    \tau\in\{  \IIapm\}, \label{UBIIa}\\
\overline{m}_{\tau}(i) &= \Delta^2 d^9 n^5, \hspace*{4cm}
    \tau\in\{ \IIbpm\},\label{UBIIb}\\
\overline{m}_{\tau}(i) &= \Delta^3 d^{11} n^6, \hspace*{3.9cm}
    \tau\in\{ \IIcpm\},\label{UBIIc}\\
\overline{m}_{\tau}(i) &= \Delta^3 d^3 n^2, \hspace*{4.0cm}
    \tau\in\{ \IIIpm\}.\nonumber
\end{align}

Next we list parameters $\LB_{\alpha}(i)$, which are lower bounds on the number of ways to perform switchings of
each class to produce a given $G\in\mathcal{S}_i$:
\begin{align*}
\underline{m}_A(i) &=
 (\Delta n - 2i) d^2 (dn)^2\left(1 - \frac{30}{n}\right) - 3 d^5 \Delta n^2-8i\Delta d^3 n^2-3 \Delta^2 d^4 n^2,\\
\underline{m}_{\alpha}(i) &= 2i(\Delta-1)(d-1) (dn-2i-10d)^2 - 6i(\Delta-1) d^3n(d+\Delta),\hspace*{15mm}
   \alpha\in \{  \Bipm\},\\
\underline{m}_{\alpha}(i)  &=
 (\Delta n-2i)\Delta d^4n - 8i\Delta^2 d^3 n -2i \Delta d^4 n -2 \Delta^2 d^5 n -12\Delta^2d^4n, \hspace*{14mm}
   \alpha\in \{ \Biipm\},\\
\underline{m}_{\alpha}(i) &=d^3\Delta^3 n^2 (1- 8(d+\Delta)/n),
 \hspace*{73mm}
  \alpha\in \{\Cpm\}.
\end{align*}

Fix a class $\alpha$ and let $\tau$ be a type such that class
$\alpha$ and type $\tau$ appear together in some row of 
Table~\ref{tab:type-class}.  For each relevant $i\leq \imax$, 
let $q_\alpha^\tau(i)$ denote the expected number of times that
an element of $\strata_i$ is reached by a Type~$\tau$, Class~$\alpha$ 
switching.  We will choose our parameters to ensure 
that the value of $q_\alpha^\tau(j)$ \emph{does not depend on $\tau$},
for any type $\tau$ associated with class $\alpha$.
This common value is denoted by $q_\alpha(i)$; that is,
$q_\alpha(i) = q_\alpha^\tau(i)$ for any type $\tau$ associated
with class $\alpha$.

It follows then that
\begin{align}\label{sigma_i}
\sigma_i = \frac{1}{|\accept|}+\sum_{\alpha} q_{\alpha}(i) \, \LB_{\alpha}(i), \quad \mbox{for every $0\le i\le \imax$.}
\end{align}
This equation holds because every $G\in \strata_i$ can be chosen as the
initial graph, if not initially rejected; or is reached via some 
switching. The probability that $G$ is the graph obtained at Step 0 is 
$1/|\accept|$, since $G_0$ is chosen uniformly, and by our design
of the algorithm, $q_{\alpha}(i) \, \LB_{\alpha}(i)$ is exactly the expected 
number of times that $G$ is reached via some class $\alpha$ switching and is not 
t-rejected, f-rejected, pre-b-rejected or b-rejected.

Immediately we have
\begin{align}
q_A(i) = \frac{\sigma_{i+1}\, \rho_{\text{I}}(i+1)}{\overline{m}_{\text{I}}(i+1)}.
\label{qA}
\end{align} 
This is because a graph $G\in\strata_i$ can be created via a 
Class A switching only via a Type I switching on a graph $G'\in\strata_{i+1}$. 
(See the first line of Table~\ref{tab:type-class}.)
Every graph in $\strata_{i+1}$ is visited $\sigma_{i+1}$ times in expectation, 
and given any $G'\in \strata_{i+1}$ such that $S=(G', G)$ is a valid Type~I Class~A 
switching, the probability that \algB\ chooses Type~I is $\rho_{\text{I}}(i+1)$, and 
the probability that \algB\ chooses the particular switching $S$ is 
$1/\UB_{\text{I}}(i+1)$. 

Next, consider $\alpha\in\{\Bipm\}$. A Class \textBipm\ switching can be of 
Type~I, \textIIapm, \textIIbpm or \textIIcpm\ (see the second line of
Table~\ref{tab:type-class}). 
Now $G\in \strata_i$ might be created from some
$G'\in\strata_i$ via a Type I Class \textBipm\ switching. Thus,
arguing as above, we have
\begin{equation}
q_{\Bi}(i) = q_{\Bvii}(i) =\frac{\sigma_i\, \rho_{\text{I}}(i)}{\overline{m}_{\text{I}}(i)}.\label{qB}
\end{equation}
To ensure that the expected number of times $G$ is visited via Type $\tau$
Class \textBipm\ switchings does not depend on $\tau$, for 
$\tau\in\{I,\IIapm,\IIbpm,\IIcpm\}$,
we must choose $\rho_{\tau}(\cdot)$ for $\tau\in\{\IIapm,\IIbpm,\IIcpm\}$ such that
\begin{align}
 \frac{\sigma_i\, \rho_{\text{I}}(i)}{\overline{m}_{\text{I}}(i)} =q_{\Bi}(i)
          &= \frac{\sigma_{i-2}\, \rho_{\IVa}(i-2)}{\overline{m}_{\IVa}(i-2)}\cdot {\widehat \LB_{\IVa}}(i) \label{eq:IVa}\\
          &= \frac{\sigma_{i-1}\, \rho_{\IVb}(i-1)}{\overline{m}_{\IVb}(i-1)}\\
          &= \frac{\sigma_{i-3}\, \rho_{\IVc}(i-3)}{\overline{m}_{\IVc}(i-3)}\cdot {\widehat \LB_{\IVc}(i)}, 
\end{align}
and 
\begin{align}
 \frac{\sigma_i\, \rho_{\text{I}}(i)}{\overline{m}_{\text{I}}(i)} =q_{\Bvii}(i)
          &= \frac{\sigma_{i-2}\, \rho_{\IVd}(i-2)}{\overline{m}_{\IVd}(i-2)}\cdot {\widehat \LB_{\IVd}}(i) \\
          &= \frac{\sigma_{i-1}\, \rho_{\IVe}(i-1)}{\overline{m}_{\IVe}(i-1)}\\
          &= \frac{\sigma_{i-3}\, \rho_{\IVf}(i-3)}{\overline{m}_{\IVf}(i-3)} \cdot {\widehat \LB_{\IVf}(i)}.\label{eq:IVf}
\end{align}


Next we consider $\alpha\in\{\Biipm\}$. A Class \textBiipm\ switching can only be of 
Type~I (see the third line of Table~\ref{tab:type-class}). Thus we immediately have 
\begin{equation}
 q_{\Bii}(i) = q_{\Bvi}(i) = \frac{\sigma_{i+2}\, \rho_{\text{I}}(i+2)}{\overline{m}_{\text{I}}(i+2)}.
\end{equation}


Finally, consider $\alpha\in\{\Cpm\}$. A Class \textCpm\ switching can be of 
Type~I or Type~\textIIIpm (see the fourth row of Table~\ref{tab:type-class}). 
Considering switchings of Type~I and Class \textCpm, we have 
\begin{equation}
q_{\Ci}(i)=q_{\Cii}(i)=\frac{\sigma_{i+1}\, \rho_{\text{I}}(i+1)}{\LB_{\text{I}}(i+1)}. 
\end{equation}
To ensure that the expected number of times $G$ is visited by
Type~\textIIIpm\ Class~\textCpm\ switchings, we must choose
$\rho_{\tau}(\cdot)$ for $\tau\in\{\IIIpm\}$ such that
\begin{align}
\frac{\sigma_{i+1}\, \rho_{\text{I}}(i+1)}{\LB_{\text{I}}(i+1)} =q_{\Ci}(i)
          &= \frac{\sigma_{i}\, \rho_{\Va}(i)}{\overline{m}_{\Va}(i)},\label{qC1}
        \end{align}
and 
\begin{align}
 \frac{\sigma_{i+1}\, \rho_{\text{I}}(i+1)}{\LB_{\text{I}}(i+1)} =q_{\Cii}(i)
          &= \frac{\sigma_{i}\, \rho_{\Vb}(i)}{\overline{m}_{\Vb}(i)}.\label{qC}
\end{align}
Combining~(\ref{qA})--(\ref{qC}), and using~(\ref{sigma_i}), we deduce that
\begin{equation}
\sigma_{i} = \frac{1}{|\accept|} + \sum_{\alpha} q_\alpha(i)\, \underline{m}_\alpha(i)\label{sigma}
\end{equation}
for $i=0,\ldots, i_1$, where $\alpha$ ranges over all possible classes 
$\{A,\, \Bipm,\Biipm,\, \Cpm\}$.
Using the change of variables $x_i = \sigma_i\, |\accept|$, for $i=0,\ldots, i_1$,
as in~\cite{GWSIAM}, we rewrite this as
\begin{equation}
x_i = 1 + \sum_{\alpha} q_\alpha(i)\, \underline{m}_\alpha(i)\, |\accept|. 
  \label{rec}
\end{equation}

\subsubsection{Boundary conditions and solving the system} 

Recall that \algB\ rejects the initial $d$-regular graph if it contains more than 
$\imax$ red edges. Thus, we set $\rho_{\tau}(i)=0$ for all $i>\imax$. 
The parameter $\rho_{\text{I}}(i)$ is already defined for all $0\le i\le \imax$, 
recalling 
that $\rho_{\text{I}}(0)$ is interpreted as the probability of outputting the 
current graph. The Type I switchings may be of Class \textBipm, \textBiipm\ or
\textCpm, as shown in Figure~\ref{fig:all-switchings}. 
A Type~I Class~\textBipm\ switching converts a graph from $\strata_i$ to 
$\strata_{i}$, and the booster switchings for Class \textBipm\ are of Type \textIIapm, \textIIbpm, \textIIcpm.  
By Table~\ref{tab:TypeIV}, we must define $\rho_{\tau}(i)$ for all
\[
\left\{
\begin{array}{ll}
0\le i\le \imax-1 & \mbox{if}\ \tau\in\{\IIapm\},\\
0\le i\le \imax-2 & \mbox{if}\ \tau\in\{\IIbpm\},\\
0\le i\le \imax-3 & \mbox{if}\ \tau\in\{\IIcpm\}.
\end{array}
\right.
\]
Similarly, consider the booster switchings for classes \textCpm. 
When $\tau\in \{\IIIpm\}$
we must define $\rho_{\tau}(i)$ for all $0\le i\le\imax$.

Note that in general there are switchings converting graphs in 
$\strata_j$ to graphs in $\strata_i$ for $i-3\le j\le i+2$, as shown in 
Figure~\ref{fig:all-switchings}. For $i$ close to $\imax$ or 0 there are fewer
switching types involved. For instance, graphs in $\strata_{\imax}$ cannot be reached by a Type~I Class~A switching, because the boundary conditions will be set so that no graphs in strata $j>\imax$ will ever be reached. 
Similarly, no graphs in $\strata_0$ can be reached by a Type~\textIIapm\ switching, because any such switching
increases the number of red edges in the graph, and this number can never be negative.

Let $\epsilon$ be a function of $n$, $d$ and $\Delta$ to be specified later. We first give a computation scheme that determines $\rho_{\text{I}}(i)$, $\rho_{\Va}(i)$, $\rho_{\Vb}(i)$ and $x_i$ for all $0\le i\le \imax$ such that
\begin{align}
\rho_{\text{I}}(\imax)=1-\epsilon;& \qquad \rho_{\text{III}}(\imax)=0; \label{cond1}\\
\rho_{\text{I}}(i)+\rho_{\Va}(i)+\rho_{\Vb}(i)=1-\epsilon &\quad\mbox{for all $0\le i\le\imax-1$}.\label{cond2}
\end{align}
By symmetry, let $\rho_{\text{III}}(i)=\rho_{\Va}(i)=\rho_{\Vb}(i)$. 
We determine $\rho_{\text{I}}(i)$, $\rho_{\text{III}}(i)$ and $x_i$ recursively for 
$i$ in descending order. 

Substituting
\begin{eqnarray}
q_A(i) &=& \frac{\sigma_{i+1}\, \rho_{\text{I}}(i+1)}{\overline{m}_{\text{I}}(i+1)};\\
q_{\Bi}(i)=q_{\Bvii}(i) &=& \frac{\sigma_i\, \rho_{\text{I}}(i)}{\overline{m}_{\text{I}}(i)};\\ 
 q_{\Bii}(i) = q_{\Bvi}(i) &=& \frac{\sigma_{i+2}\, \rho_{\text{I}}(i+2)}{\overline{m}_{\text{I}}(i+2)};\\
 q_{\Ci}(i)=q_{\Cii}(i)&=&\frac{\sigma_{i+1}\, \rho_{\text{I}}(i+1)}{\LB_{\text{I}}(i+1)}
\end{eqnarray}
into~(\ref{rec}), we have
\begin{align}
 x_i &= 1+ \frac{x_{i+1}\, \rho_{\text{I}}(i+1)}{\overline{m}_{\text{I}}(i+1)} \LB_A(i)+2\cdot\frac{x_i\, \rho_{\text{I}}(i)}{\overline{m}_{\text{I}}(i)} \LB_{\Bi}(i) \nonumber\\
  & \hspace*{4cm} {} +2\cdot\frac{x_{i+2}\, \rho_{\text{I}}(i+2)}{\overline{m}_{\text{I}}(i+2)} \LB_{\Bii}(i)+2\cdot \frac{x_{i+1}\, \rho_{\text{I}}(i+1)}{\UB_{\text{I}}(i+1)}\LB_{\Ci}(i).\label{rec2}
\end{align}
Here we used the fact that $\LB_{\Bi}(i)=\LB_{\Bvii}(i)$, $\LB_{\Bii}(i)=\LB_{\Bvii}(i)$ and $\LB_{\Ci}(i)=\LB_{\Cii}(i)$.
Similarly, by~(\ref{qC1}) and~(\ref{qC}), and using the fact that $\UB_{\Va}(i)=\UB_{\Vb}(i)$,
\begin{equation}
\rho_{\text{III}}(i)=\frac{x_{i+1}}{x_{i}} \,\frac{ \overline{m}_{\Va}(i)}{\UB_{\text{I}}(i+1)}\, \rho_{\text{I}}(i+1). \label{rhoV}
\end{equation}

\bigskip

\noindent {\em Base case} (a): $i=\imax$. We have set $\rho_{\text{I}}(\imax)=1-\epsilon$, $\rho_{\text{III}}(\imax)=0$ and $\rho_{\text{I}}(i)=0$ for all $i>\imax$. 
Hence, by (\ref{rec2}),
\[
\left(1-2\cdot\frac{\rho_{\text{I}}(\imax)}{\overline{m}_{\text{I}}(\imax)} \LB_{\Bi}(\imax)\right)x_{\imax} = 1
\]
which determines $x_{\imax}$.

\bigskip

\noindent {\em Base case} (b): $i=\imax-1$. 
We have
\begin{equation}
\rho_{\text{I}}(\imax-1)+2\rho_{\text{III}}(\imax-1)=1-\epsilon,\label{eq:b1}
\end{equation}
and
\begin{equation}
\left(1-\kappa_1\, \rho_{\text{I}}(\imax-1)\right)x_{\imax-1} = \kappa_2, \label{eq:b2}
\end{equation}
where
\[
\kappa_1= \frac{2\, \LB_{\Bi}(\imax-1)}{\overline{m}_{\text{I}}(\imax-1)},\qquad 
\kappa_2=1+ \frac{\rho_{\text{I}}(\imax)}{\UB_{\text{I}}(\imax)} \big(\LB_A(\imax-1)+2\, \LB_{\Ci}(\imax-1)\big)\, x_{\imax}.
\]
Hence, by (\ref{rhoV}), 
\begin{equation}
\rho_{\text{III}}(\imax-1) \, x_{\imax-1}=\kappa_3 \label{eq:b3}
\end{equation}
where
\[
\kappa_3=x_{\imax} \,\frac{ \overline{m}_{\Va}(\imax-1)\, \rho_{\text{I}}(\imax)}{\UB_{\text{I}}(\imax)}.
\]
Solving~(\ref{eq:b1}),~(\ref{eq:b2}) and~(\ref{eq:b3}) gives
\[
x_{\imax-1}=\frac{\kappa_2 -2\, \kappa_1\kappa_3}{1-\kappa_1(1-\epsilon)},
\]
and substituting this into~(\ref{eq:b3}) and then into~(\ref{eq:b1}) yields $\rho_{\text{III}}(\imax-1)$ and $\rho_{\text{I}}(\imax-1)$.

\bigskip

\noindent \emph{Inductive step.}\
Now assume that $i\le \imax-2$ and that $\rho_{\text{I}}(j)$, $\rho_{\text{III}}(j)$ and $x_j$ have been 
determined for all $j>i$. By~(\ref{rec2}) we have
\begin{equation}
\left(1-\kappa_1\rho_{\text{I}}(i)\right)x_{i} = \kappa_2, \label{eq:c2}
\end{equation}
where
\begin{eqnarray*}
\kappa_1&=&\frac{2\, \LB_{\Bi}(i)}{\UB_{\text{I}}(i)},\\
\kappa_2&=&1+\frac{\rho_{\text{I}}(i+1)}{\UB_{\text{I}}(i+1)} \big(\LB_A(i)+2\, \LB_{\Ci}(i)\big)\, x_{i+1}+
  2\cdot\frac{\rho_{\text{I}}(i+2) \,  \LB_{\Bii}(i)}{\UB_{\text{I}}(i+2)}\,x_{i+2}.
\end{eqnarray*}
We also have
\begin{eqnarray}
\rho_{\text{I}}(i)+2\rho_{\text{III}}(i)&=&1-\epsilon, \label{eq:sum}\\
\rho_{\text{III}}(i) x_{i}&=&\kappa_3,\quad \mbox{where}\ \kappa_3=x_{i+1} \,\frac{ \overline{m}_{\Va}(i)\, \rho_{\text{I}}(i+1)}{\UB_{\text{I}}(i+1)}.\label{eq:rhoV}
\end{eqnarray}
As in base case (b), we obtain 
\begin{equation}
x_{i}=\frac{\kappa_2 - 2\kappa_1\kappa_3}{1-\kappa_1(1-\epsilon)}, \label{xi}
\end{equation}
and immediately this gives $\rho_{\text{III}}(i)$ and $\rho_{\text{I}}(i)$. Thus we have uniquely determined $\rho_{\text{I}}(i)$, $\rho_{\text{III}}(i)$ and $x_i$ that satisfy~(\ref{cond1}) and~(\ref{cond2}).

Next we specify $\epsilon$. Define
\begin{equation}
\epsilon=5\left(\frac{\Delta+d}{n}\right)^2.
\label{epsilon}
\end{equation}

\begin{lemma}\label{lem:solution}
For $\epsilon$ as defined in \emph{(\ref{epsilon})}, 
equations~\emph{(\ref{qA})--(\ref{rec})} have a unique solution 
$(x^*_i,\rho^*_{\tau}(i):\tau\in \Gamma, i\geq 0)$  which
satisfies~\emph{(\ref{cond1})} 
and~\emph{(\ref{cond2})}.  Moreover,  for all $0\le i\le \imax$, 
\begin{align}
x^*_i> 0;&\qquad\qquad \frac{x^*_{i}}{x^*_{i-1}}=O(i/d\Delta) \,\,\, \text{ if \,\, $i\ge 1$}; \label{condx}\\
0\le \rho^*_{\tau}(i)\le 1\,\, \text{ for all \,\, $\tau\in\Gamma$};& 
\qquad\qquad \sum_{\tau\in\Gamma} \rho^*_{\tau}(i)\le 1.\label{condrho}
\end{align}
\end{lemma}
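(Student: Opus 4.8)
The plan is to prove all four assertions by a single downward induction on $i$, running from $i=\imax$ to $i=0$ and following the recursive computation scheme already set up in~(\ref{rec2})--(\ref{xi}) together with the two explicit base cases. At the step for a fixed index $i$ I assume that $x_j$, $\rho_{\text{I}}(j)$, $\rho_{\text{III}}(j)$ and the Type~II probabilities at all indices $j>i$ are already determined and satisfy the conclusions; I then compute $x_i$, $\rho_{\text{III}}(i)$, $\rho_{\text{I}}(i)$ and the Type~II probabilities at index~$i$ and verify the four claims for them. The first observation is that computing $x_i$ via~(\ref{xi}) uses only the explicit parameters $\LB_\alpha,\UB_\tau$ (which are explicit functions of $i$) together with $\rho_{\text{I}}(i+1),\rho_{\text{I}}(i+2),x_{i+1},x_{i+2}$, all already fixed; hence there is no circularity, and existence and uniqueness of the whole solution reduce to checking that the denominator $1-\kappa_1(1-\epsilon)$ in~(\ref{xi}) never vanishes. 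From the explicit forms one finds $\kappa_1=2\LB_{\Bi}(i)/\UB_{\text{I}}(i)=O(\Delta/n)=o(1)$, so the denominator lies in $(1/2,1)$ for large $n$; this establishes (for all $i$) that the recursion determines a unique solution. The base cases $i=\imax$ and $i=\imax-1$ are handled by the direct computations in the excerpt, noting that the same denominator $1-\kappa_1(1-\epsilon)$ controls~(\ref{eq:b2}).

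For positivity and the growth ratio I would analyse the numerator of~(\ref{xi}). Every summand of $\kappa_2$ is nonnegative by the inductive hypothesis $x_{i+1},x_{i+2}>0$, so $\kappa_2\ge1$, while the correction $2\kappa_1\kappa_3$ is controlled by comparing it against the Class~A term $T_A:=\rho_{\text{I}}(i+1)\,\UB_{\text{I}}(i+1)^{-1}\bigl(\LB_A(i)+2\LB_{\Ci}(i)\bigr)x_{i+1}$ of $\kappa_2$: the factors $x_{i+1}$, $\rho_{\text{I}}(i+1)$ and $\UB_{\text{I}}(i+1)$ cancel, leaving a ratio of $\LB$/$\UB$ parameters equal to $O(\Delta^3/(dn^2))$, which is $o(1)$ because $\Delta^2=o(n)$. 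Thus $2\kappa_1\kappa_3=o(1)\cdot T_A\le o(1)\cdot\kappa_2$, the numerator is $(1-o(1))\kappa_2>0$, and $x_i>0$. For the ratio I keep only $T_A$ in the numerator: since $\bigl(\LB_A(i)+2\LB_{\Ci}(i)\bigr)/\UB_{\text{I}}(i+1)\sim\Delta d/(2(i+1))$ and $\rho_{\text{I}}(i+1)=\Omega(1)$ (established at the previous step, see below), one gets $x_i\ge(1-o(1))T_A\ge\Omega(\Delta d/(i+1))\,x_{i+1}$, which is precisely the bound $x_{i+1}/x_i=O((i+1)/(d\Delta))$ of~(\ref{condx}). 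I expect the \emph{main obstacle} to be making this reduction to a single dominant Class~A term fully rigorous: one must check uniformly over $0\le i\le\imax$ that the constant $1$, the $x_{i+2}$ term, and $2\kappa_1\kappa_3$ are all genuinely lower order than $T_A$—including near both ends of the range, where either $x_i=\Theta(1)$ makes the constant term competitive, or $i$ is small—and this is where $\Delta^2=o(n)$, the bound $i\le\imax=\tfrac23 d\Delta$, and the feedback lower bound $\rho_{\text{I}}=\Omega(1)$ are used repeatedly.

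It remains to establish the probability bounds~(\ref{condrho}). Having computed $x_i$, I obtain $\rho_{\text{III}}(i)$ from~(\ref{rhoV}) and then $\rho_{\text{I}}(i)=1-\epsilon-2\rho_{\text{III}}(i)$ from~(\ref{cond2}). Formula~(\ref{rhoV}) exhibits $\rho_{\text{III}}(i)$ as a product of positive quantities, so $\rho_{\text{III}}(i)\ge0$; substituting $\UB_{\Va}(i)=\Delta^3 d^3 n^2$, $\UB_{\text{I}}(i+1)\sim 2(i+1)d^3 n^3$ and the just-proved ratio bound gives $\rho_{\text{III}}(i)=O(\Delta^2/n)=o(1)$. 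Hence $0\le\rho_{\text{III}}(i)\le1$ and $\rho_{\text{I}}(i)=1-o(1)\in[\Omega(1),1]$, which also validates the lower bound on $\rho_{\text{I}}$ used in the previous paragraph. The Type~II probabilities at index~$i$ are read off from the equalisation identities~(\ref{eq:IVa})--(\ref{eq:IVf}); substituting the parameters $\UB_\tau,\widehat{\LB}_\tau$ and the ratio estimate $\sigma_i/\sigma_{i-k}=x_i/x_{i-k}=O\bigl((i/d\Delta)^k\bigr)$ shows each is nonnegative and of size $O((\Delta+d)^2/n^2)$ or smaller, with the \textIIapm\ terms (of order $\Delta^2/n^2$) dominating and the \textIIbpm, \textIIcpm\ terms strictly smaller. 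Summing the six Type~II probabilities and adding $\rho_{\text{I}}(i)+2\rho_{\text{III}}(i)=1-\epsilon$ yields $\sum_\tau\rho_\tau(i)\le 1-\epsilon+O((\Delta+d)^2/n^2)$. The only genuinely quantitative (rather than order-of-magnitude) step is then to verify that the constant $5$ in the definition~(\ref{epsilon}) of $\epsilon$ dominates the implied constant in this error term, so that $\sum_\tau\rho_\tau(i)\le1$; pinning down that constant is exactly what justifies the choice~(\ref{epsilon}).
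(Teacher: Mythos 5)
Your proposal matches the paper's proof essentially step for step: the same downward induction from $i=\imax$ with the two explicit base cases, positivity of $x^*_i$ via $\kappa_2>0$ together with the negligibility of $2\kappa_1\kappa_3$ against the dominant Class-A term of $\kappa_2$, the ratio bound $x_{i+1}/x_i=O((i+1)/d\Delta)$ fed back through~(\ref{eq:rhoV}) to give $\rho_{\text{III}}(i)=o(1)$ and hence $\rho_{\text{I}}(i)=1-\epsilon-2\rho_{\text{III}}(i)=1-o(1)$, and finally the Type~II probabilities read off from~(\ref{eq:IVa})--(\ref{eq:IVf}) and shown to sum below $\epsilon$. The only real difference is that the paper executes the quantitative step you flagged but deferred, carrying explicit constants throughout (the strengthened ratio $x^*_{i+1}/x^*_i\le 2.3(i+1)/d\Delta$, then $\rho_{\text{III}}(i)\le 1.2\Delta^2/(dn)$ and the bounds $4d\Delta/n^2$, $2\Delta^2/n^2$, $\tfrac12\Delta^2/n^2$ for the Type~II probabilities) so that their sum is verifiably dominated by $\epsilon=5\left((\Delta+d)/n\right)^2$.
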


\bigskip

\begin{proof}
We have already shown that there are unique $x^*_i$, $\rho^*_{\text{I}}(i)$ and $\rho^*_{\Va}(i)$, $\rho^*_{\Vb}(i)$ satisfying~(\ref{cond1}) and~(\ref{cond2}). Substituting these values into~(\ref{eq:IVa})--(\ref{eq:IVf}) uniquely determines $\rho^*_{\tau}(i)$ for all $\tau\in\{\IIapm,\, \IIbpm,\, \IIcpm\}$
and all relevant values of $i$.

It is easy to verify that $x^*_{\imax}>0$ and $x^*_{\imax-1}>0$ and that~(\ref{condrho}) is satisfied for $i=\{\imax,\imax-1\}$. We will prove by induction on $i$, for all $0\le i\le \imax-1$,  that~(\ref{condrho}) is satisfied as well as the following strengthening condition of~(\ref{condx}).
\begin{equation}
x^*_{i+1}>0;\quad x^*_{i}>0,\quad \frac{x^*_{i+1}}{x^*_{i}}\le 2.3(i+1)/d\Delta. \label{condx2}
\end{equation}

 By~(\ref{rec2}), for every $i\le\imax-1$, provided that $x_{i+2}\ge 0$ and $\rho_{\text{I}}(i+2)\ge 0$,
we have
\begin{equation}
\frac{x_{i+1}}{x_{i}}\le \frac{1.1\, \UB_{\text{I}}(i+1)}{\LB_A(i)}\le 2.3(i+1)/d\Delta,\label{ratio}
\end{equation}
since
\[
x_{i}\left(1-2\cdot\frac{\rho_{\text{I}}(i)}{\overline{m}_{\text{I}}(i)} \LB_{\Bi}(i)\right)>\frac{x_{i+1}\, \rho_{\text{I}}(i+1)}{\overline{m}_{\text{I}}(i+1)} \Big( \LB_A(i)+2\LB_{\Ci}(i)\Big),
\]
and $\LB_{\Bi}(i)/\UB_{\text{I}}(i)=o(1)$ and $\LB_{\Ci}(i)=o(\LB_A(i))$. Hence~(\ref{condx2}) and~(\ref{condrho}) are satisfied for $i=\imax-1$. 

Now assume $i\le \imax-1$.  By induction, $x^*_{j}>0$  and $\rho^*_{\text{I}}(j)\ge 0$ for all $j\ge i+1$. 
Thus, $\kappa_2>0$ where $\kappa_2$ is as below~(\ref{eq:c2}). It is easy to see that $\kappa_1<1$  by the definition of $\LB_{\Bi}(i)$ and $\UB_{\text{I}}(i)$. Thus, $x^*_i>0$ by~(\ref{xi}). Since $\rho_{\text{I}}(i+2)\ge 0$ and $x^*_{i+2}\ge 0$ by induction,~(\ref{ratio}) follows and thus~(\ref{condx2}) holds for $i$.

Substituting~(\ref{xi}) to~(\ref{eq:rhoV}) we have
\begin{equation}
\rho_{\text{III}}(i) = \frac{x_{i+1}}{x_i} \frac{\UB_{\Va}(i)}{\UB_{\text{I}}(i+1)} \rho_{\text{I}}(i+1)\le \frac{2.3(i+1)}{d\Delta} \, \frac{\UB_{\Va}(i)}{\UB_{\text{I}}(i+1)}\le \frac{1.2\Delta^2}{dn}. \label{eq:IIIprob}
\end{equation}
as we have already established~(\ref{condx2}) for $i$, and $\rho_{\text{I}}(i+1)\le 1$ by induction. 
Substituting the bound to~(\ref{eq:sum}), we have $\rho_{\text{I}}(i)=1-\epsilon-2\rho_{\text{III}}(i)=1-o(1)\ge 0$.

Finally, by~(\ref{eq:IVa})--(\ref{eq:IVf}) and using (\ref{def-imaxb}), we have
\begin{eqnarray}
\rho_{\IVa}(i)&=&\rho_{\IVd}(i)=\frac{x_{i+2}}{x_i} \frac{\rho_{\text{I}}(i+2)\, \UB_{\IVa}(i)}{\UB_{\text{I}}(i+2)\, {\widehat \LB_{\IVa}}(i+2)}<4d\Delta/n^2,\label{eq:IIbprob}\\
\rho_{\IVb}(i)&=&\rho_{\IVe}(i)=\frac{x_{i+1}}{x_i} \frac{\rho_{\text{I}}(i+1)\, \UB_{\IVb}(i)}{\UB_{\text{I}}(i+1)}\le 2\Delta^2/n^2,\label{eq:IIaprob}\\
\rho_{\IVc}(i)&=&\rho_{\IVf}(i)=\frac{x_{i+3}}{x_i} \frac{\rho_{\text{I}}(i+3)\, \UB_{\IVc}(i)}{\UB_{\text{I}}(i+3)\, {\widehat \LB_{\IVc}}(i+3)}=O(\Delta^2 d/n^3)<
\nfrac{1}{2}\, \Delta^2/n^2. \label{eq:IIcprob}
\end{eqnarray}
With these bounds on $\rho_{\tau}(i)$ for $\tau\in\{\IIapm, \IIbpm, \IIcpm\}$, 
we can easily verify that 
\[ \sum_{\tau\in \{\IIapm, \IIbpm, \IIcpm\}} \, \rho^*_{\tau}(i)<\epsilon.\] 
This verifies~(\ref{condrho}), completing the proof.
\end{proof}

Now we define parameters $\rho_{\tau}(i)$ in \algB\ by 
$\rho_{\tau}(i) = \rho^*_{\tau}(i)$, where $\big(\rho^*_\tau(i)\big)$ 
is the unique solution
guaranteed by Lemma~\ref{lem:solution}. This completes the definition of \algB. Next we show that the output of \algB\ is correct.

\begin{lemma}\label{lem:uniform}
The output of \algB\ is a uniformly random $d$-factor of $H_n$.
\end{lemma}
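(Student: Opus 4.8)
The plan is to prove that the designed stratum‑values $\sigma_i=x^*_i/|\accept|$ (from Lemma~\ref{lem:solution}) coincide with the \emph{actual} expected‑visit function of \algB, and then read off uniformity on $\strata_0$. Recall that $\sigma(G)$ denotes the expected number of times $G$ occurs as the current graph in a single run, starting from the initial graph $G_0$ (which, conditioned on passing the initial filter, is uniform over $\accept$ since REG is exact) and continuing until the first rejection or output. First I would observe that $\sigma$ satisfies a linear fixed‑point equation $\sigma(G)=1/|\accept|+\sum_{G'}P(G',G)\,\sigma(G')$, where $P(G',G)$ is the expected number of non‑rejecting one‑step transitions $G'\mapsto G$. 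At every step there is a strictly positive t‑rejection probability $1-\sum_{\tau\in\Gamma}\rho^*_{\tau}(i)>0$, guaranteed by~(\ref{cond1})--(\ref{cond2}) and the bound $\sum_{\tau\in\{\IIapm,\IIbpm,\IIcpm\}}\rho^*_{\tau}(i)<\epsilon$ established in the proof of Lemma~\ref{lem:solution}. Hence every run terminates almost surely with finite expected length, the kernel $P$ is sub‑stochastic with spectral radius below $1$, and the fixed‑point equation has a \emph{unique} finite nonnegative solution.

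The core step is to verify that the constant‑on‑strata assignment $\hat\sigma(G)=\sigma_{i}$ for $G\in\strata_i$ solves the same equation; uniqueness then forces $\sigma=\hat\sigma$. Fixing $G\in\strata_i$ and substituting $\hat\sigma$, I would compute, class by class, the contribution of each inverse class-$\alpha$ switching producing $G$ from a source $G'\in\strata_{i'}$. The clean bookkeeping is that the probability of selecting a \emph{specific} type-$\tau$ switching and surviving f‑rejection is exactly $\rho^*_{\tau}(i')/\UB_{\tau}(i')$ (the dependence on $f_{\tau}(G')$ cancels, exactly as in the proof of Theorem~\ref{thm:A}); the pre‑b‑rejection contributes a factor $\widehat\LB_{\tau}(i)/\widehat b_{\tau}(G,\boldsymbol v)$ which, summed over the extra‑vertex choices $\boldsymbol y$, equals exactly $\widehat\LB_{\tau}(i)$ per octagon (the pre‑b probability lying in $[0,1]$ by Lemma~\ref{lem:IVa}); and the b‑rejection contributes $\LB_{\alpha}(i)/b_{\alpha}(G)$ (well‑defined by Lemmas~\ref{lem:bb},~\ref{lem:bB2},~\ref{lem:LBB1},~\ref{lem:LBC}). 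Using $\hat\sigma(G')=\sigma_{i'}$ together with the defining equations~(\ref{eq:IVa})--(\ref{eq:IVf}) for the Class~$\Bipm$ boosters and~(\ref{qC1})--(\ref{qC}) for the Class~$\Cpm$ boosters, \emph{every} inverse class-$\alpha$ octagon in $G$ — regardless of which associated type produced it — is created at the common rate $q_{\alpha}(i)$. Consequently the class-$\alpha$ arrivals before b‑rejection total $q_{\alpha}(i)\,b_{\alpha}(G)$, and the b‑rejection renormalises this to exactly $q_{\alpha}(i)\,\LB_{\alpha}(i)$, independent of $G$. Summing over all classes and invoking~(\ref{sigma}) gives $\sum_{G'}P(G',G)\hat\sigma(G')=\sigma_i-1/|\accept|$, so $\hat\sigma$ is indeed a fixed point, and therefore $\sigma(G)=\sigma_i$ for all $G\in\strata_i$.

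Finally I would deduce uniformity. A graph $G\in\strata_0$ is output precisely when it is visited and Type~I is then chosen, an event of probability $\rho^*_{\text{I}}(0)$ at each visit, independently across visits. A standard renewal identity then yields $\Pr(\text{a run outputs }G)=\rho^*_{\text{I}}(0)\,\sigma(G)=\rho^*_{\text{I}}(0)\,\sigma_0$, the same value for every $G\in\strata_0$. Conditioning on the run that eventually produces an output, the output is uniform over $\strata_0$, which is exactly the set of $d$‑factors of $H_n$.

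The main obstacle is the second step: correctly tracking how the three rejection mechanisms each renormalise the relevant counts ($f_{\tau}(G')\to\UB_{\tau}$ via f‑rejection, $\widehat b_{\tau}\to\widehat\LB_{\tau}$ via pre‑b‑rejection, and $b_{\alpha}(G)\to\LB_{\alpha}$ via b‑rejection), so that the per‑octagon creation rate collapses to the type‑independent constant $q_{\alpha}(i)$ pinned down by the booster equations. In particular one must check that the booster contributions arriving from the \emph{lower} strata $\strata_{i-1},\strata_{i-2},\strata_{i-3}$ are exactly what is needed to make $\hat\sigma$ satisfy the reduced recurrence~(\ref{rec2}), which references only strata $\geq i$; this is the whole purpose of the design equations, and it is what allows the uniqueness argument to close despite the two‑directional coupling between strata.
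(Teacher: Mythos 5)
Your proposal is correct and takes essentially the same route as the paper: its proof likewise argues that because the parameters $\rho_{\tau}(i)$ solve the design equations~(\ref{qA})--(\ref{rec}), the expected number of visits to every $G\in\strata_i$ equals the common value $\sigma_i$, whence each $d$-factor is output with probability $\sigma_0\,\rho_{\text{I}}(0)$, independent of $G$. The only difference is one of rigor, not of method: the paper asserts the visit-count identity directly (deferring its justification to the general framework of~\cite{GWSIAM}), whereas you explicitly verify it by checking that the constant-on-strata assignment satisfies the expected-visits fixed-point equation and invoking uniqueness for the sub-stochastic kernel — a sound elaboration of the same argument, with the f-, pre-b- and b-rejection cancellations tracked exactly as the design intends.
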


\begin{proof} By the definition of \algB, the output graph
contains no red edges. Thus it is a $d$-factor of $H_n$. Recall that the parameters $\rho_{\tau}(i)$ are set according to the solution of (\ref{qA})--(\ref{rec}). Hence, the expected number of times that a graph in $\strata_0$ is visited equals $\sigma_0$, by~(\ref{sigma}), which is the same for every graph in $\strata_0$. It follows that
for every $d$-factor $G$ of $H_n$, the probability that $G$ is the output of \algB\ is
$
\sigma_0\rho_{\text{I}}(0)$, which, again, is independent of $G$. Hence, \algB\ is a
uniform sampler for $d$-factors of $H_n$. 
\end{proof}

\subsection{Rejection probability and number of switching steps}

We bound the number of switching steps performed by \algB\ and 
the probability of any rejection in \algB.
The proofs of Lemmas~\ref{lem:iterations} and~\ref{lem:rejections} are standard, and very similar to those in~\cite{GWSIAM}. 

\begin{lemma}\label{lem:iterations}
\algB\ performs at most $O(\imax)$ switching steps 
in expectation and with high probability.
\end{lemma}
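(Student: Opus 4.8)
The plan is to track the stratum index $i_t$ of the current graph $G_t\in\strata_{i_t}$ throughout a single run of \algB\ and to bound the number of steps before $i_t$ first reaches $0$. Since any f-rejection, pre-b-rejection, b-rejection or t-rejection merely terminates the current run, the number of switching steps in a run is at most the hitting time of $\strata_0$ for the auxiliary Markov chain in which, at each step from $\strata_i$, we choose a type $\tau\in\Gamma$ with probability $\rho_\tau(i)$ (and ``stay'' with the remaining probability $1-\sum_\tau\rho_\tau(i)$, in place of a t-rejection) and then always perform the chosen switching. It therefore suffices to bound the hitting time of $0$ for this chain. I would classify each step by its effect on $i$: a Type~I switching of Class~A or $\Cpm$ decreases $i$ by $1$, one of Class~$\Biipm$ by $2$, and one of Class~$\Bipm$ leaves $i$ unchanged; a Type~$\IIIpm$ switching leaves $i$ unchanged; and a booster switching of Type~$\IIapm$, $\IIbpm$ or $\IIcpm$ increases $i$ by $1$, $2$ or $3$ respectively.

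The heart of the argument is that $i$ decreases by $1$ at almost every step, while the ``bad'' (increasing or index-preserving) steps are rare. A Type~I switching is chosen with probability $\rho_{\text{I}}(i)=1-o(1)$, and conditional on Type~I the switching is a uniformly random valid Type~I switching, so its class is Class~A with probability $1-O(\Delta/n)$: indeed $f_{\Bipm}(G)=O(i\Delta d(dn)^2)$ while $f_{\text{I}}(G)=\Theta(i(dn)^3)$ by Lemma~\ref{lem:ff}, so the B1 fraction (and likewise B2, C) is $O(\Delta/n)$ uniformly over $G\in\strata_i$. Next, using the bounds from Lemma~\ref{lem:solution}, namely $\sum_{\tau\in\{\IIapm,\IIbpm,\IIcpm\}}\rho_\tau(i)<\epsilon=O((d+\Delta)^2/n^2)$ and $\rho_{\IIIpm}(i)=O(\Delta^2/(dn))$, I would show that over any window of $O(\imax)$ steps the expected number of booster steps is $O(\imax\,\epsilon)=O((d+\Delta)^4/n^2)=o(1)$, the expected number of $\IIIpm$ steps is $O(\imax\,\Delta^2/(dn))=O(\Delta^3/n)=o(\imax)$, and the expected number of Class~$\Bipm$ index-preserving steps is $O(\imax\,\Delta/n)=O(d\Delta^2/n)=o(\imax)$; each of these is $o(\imax)$ precisely because $d^2+\Delta^2=o(n)$ and $\imax=\tfrac23 d\Delta$.

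Combining these per-step estimates shows that for $i_t\ge 1$ the one-step drift satisfies $\ex[i_{t+1}-i_t\mid \mathcal F_t]\le -1+o(1)$, since a Class~A decrement of $1$ occurs with probability $1-o(1)$ while the expected upward contribution of the booster steps is at most $3\epsilon=o(1)$. Hence $i_t+\tfrac12 t$ is a supermartingale for large $n$, and optional stopping at the hitting time $T$ of $\strata_0$ gives $\ex[T]=O(\imax)$, using $i_0\le\imax$. Since the increments of $i_t$ are bounded (by $3$ in absolute value) and the drift is uniformly negative away from $0$, a standard martingale concentration bound (for instance Freedman's or Azuma's inequality, or a direct comparison with a biased random walk) shows that $T=O(\imax)$ with high probability as well; as rejections only end a run earlier, the same bounds hold for the actual number of switching steps. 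The main obstacle is controlling the rare upward booster jumps together with the index-preserving $\Bipm$ and $\IIIpm$ steps, and doing so for a stopping time (the run length) rather than a fixed-length sum; both are handled by the supermartingale/optional-stopping setup, once the per-step probability bounds of Lemma~\ref{lem:solution} and the uniform class-fraction bound from Lemma~\ref{lem:ff} are in hand, and all of these contributions are shown to be $o(\imax)$ under the hypothesis $d^2+\Delta^2=o(n)$.
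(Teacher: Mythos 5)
Your proposal is correct and takes essentially the same route as the paper, whose proof is only a sketch (citing \cite[Lemma~8]{GWSIAM}): a Type~I Class~A step, which decreases the stratum index by exactly one, occurs with probability $1-o(1)$ at every step, all other steps change the index by at most $+3$ and are rare, and a drift/counting argument then yields $O(\imax)$ steps in expectation and with high probability; your supermartingale-plus-optional-stopping formulation is just a more explicit implementation of this idea. One small inaccuracy worth fixing: the run length is \emph{not} bounded by the hitting time of $\strata_0$, because \algB\ outputs only when Type~I is chosen at a graph in $\strata_0$ --- with probability $o(1)$ it instead performs a Type~III$\pm$ (do-nothing) step or a booster switching that moves back up to $\strata_j$ with $j\le 3$ --- but since $\rho_{\text{I}}(0)=1-o(1)$, this contributes only $O(1)$ expected additional steps, and your drift argument absorbs it with the stopping time taken to be the output time rather than the first visit to $\strata_0$.
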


\begin{proof}
The proof is almost identical to~\cite[Lemma 8]{GWSIAM}. We omit the details 
here and only sketch the main idea. It is easy to verify that with probability 
$1-o(1)$, a Type~I Class~A switching is performed in each step. Thus, we can easily bound the probability by $o(1)$ that more than 5\% of steps are implemented with a switching that is not of Type~I Class~A. With a Type I~Class~A switching, the number of red edges reduces by exactly one. On the other hand, the number of red edges can increase by at most~3 in each step. Since initially there are at most $\imax$ red edges, it follows immediately that with high probability the number of switching steps performed by
\algB\ is $O(\imax)$.
\end{proof}

\begin{lemma}\label{lem:rejections}
Assume that $\epsilon \, \imax=o(1)$. Then the probability of a t-rejection, or an f-rejection, or a b-rejection, or pre-b-rejection occurring in \algB\ is $o(1)$.
\end{lemma}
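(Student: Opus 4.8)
The plan is to bound the expected number of rejections of each of the four kinds during a single run of \algB. Since a run terminates at its first rejection, this expected count equals the probability that the run ends in that kind of rejection, so summing the four bounds controls the total rejection probability. Throughout I would use that a run performs $O(\imax)$ switching steps in expectation and a.a.s.\ (Lemma~\ref{lem:iterations}), that $\imax=\tfrac23 d\Delta$ by~(\ref{def-imaxb}), and that $\epsilon\,\imax=o(1)$ for $\epsilon$ as in~(\ref{epsilon}) under $d^2+\Delta^2=o(n)$.

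First I would dispose of the rejections admitting a uniform per-step bound. The \emph{t-rejection} probability in a step at $\strata_i$ is $1-\sum_{\tau\in\Gamma}\rho_\tau(i)$; by~(\ref{cond2}) and the bound $\sum_{\tau\in\{\IIapm,\IIbpm,\IIcpm\}}\rho_\tau(i)<\epsilon$ established in the proof of Lemma~\ref{lem:solution}, this is at most $\epsilon$, so over $O(\imax)$ steps the total is $O(\epsilon\,\imax)=o(1)$. For the \emph{f-rejection}, the dominant term is Type~I, whose probability $1-f_{\mathrm I}(G)/\UB_{\mathrm I}(i)$ is $O((d^2+\Delta^2)/n^2+1/n)$ \emph{uniformly} over $G\in\strata_i$ by Lemma~\ref{lem:ff}; multiplying by $O(\imax)=O(d\Delta)$ and using $d,\Delta=o(\sqrt n)$ gives $o(1)$. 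Whenever a booster type $\IIapm,\IIbpm,\IIcpm$ is selected instead, I would simply charge the entire step to the event ``a Type~II switching was chosen'', which has probability $<\epsilon$; this absorbs the corresponding f-rejections and also the \emph{pre-b-rejections} (which occur only for Types $\IIbpm,\IIcpm$), at total cost $O(\epsilon\,\imax)=o(1)$. The Type~$\IIIpm$ f-rejections are negligible because $\rho_{\IIIpm}(i)=O(\Delta^2/(dn))$ while the associated probability is $O((d+\Delta)/n)$ by Lemma~\ref{lem:LBC}.

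The crux is the \emph{b-rejection}, whose per-step probability $1-\LB_\alpha(i')/b_\alpha(G')$ cannot be bounded uniformly since $b_\alpha(G')$ varies across $\strata_{i'}$. Here I would use the design identity: the algorithm is arranged so that all inverse class-$\alpha$ switchings into a given $G\in\strata_i$ arrive at a common rate $q_\alpha(i)$ (the pre-b-rejection is precisely what equalises the booster types with Type~I for this purpose), so the expected number of class-$\alpha$ arrivals at $G$ before the b-rejection decision is $q_\alpha(i)\,b_\alpha(G)$. Consequently the expected number of class-$\alpha$ b-rejections is
\[
\sum_i q_\alpha(i)\sum_{G\in\strata_i}\bigl(b_\alpha(G)-\LB_\alpha(i)\bigr)
 =\sum_i q_\alpha(i)\,|\strata_i|\,\LB_\alpha(i)\,O(\eta_\alpha),
\]
where I invoke the \emph{average}-value estimates $\ex\, b_\alpha(G)=\LB_\alpha(i)\,(1+O(\eta_\alpha))$ of Lemmas~\ref{lem:bb}, \ref{lem:bB2}, \ref{lem:LBB1} and~\ref{lem:LBC}, with $\eta_A,\eta_{\Biipm},\eta_{\Cpm}=O((d+\Delta)/n)$ and $\eta_{\Bipm}=O(1/d+(d+\Delta)/n)$. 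Writing $\bar S_\alpha=\sum_i q_\alpha(i)|\strata_i|\LB_\alpha(i)$ for the total expected number of successful class-$\alpha$ switchings, this quantity is $O(\eta_\alpha\,\bar S_\alpha)$, so it remains to bound each $\bar S_\alpha$.

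To bound $\bar S_\alpha$ I would use that the total number of switching steps is $O(\imax)$ (Lemma~\ref{lem:iterations}), so $\sum_\alpha\bar S_\alpha=O(\imax)$, and that class-$\alpha$ switchings form only an $O(\LB_\alpha(i)/\UB_{\mathrm I}(i))$ fraction of the steps. Since $\LB_A(i)/\UB_{\mathrm I}(i)=\Theta(1)$ but $\LB_{\Bipm}(i)/\UB_{\mathrm I}(i),\ \LB_{\Biipm}(i)/\UB_{\mathrm I}(i)=O(\Delta/n)$ and $\LB_{\Cpm}(i)/\UB_{\mathrm I}(i)=O(\Delta^2/(dn))$, the routine summation (controlled as in~\cite{GWSIAM} using Lemma~\ref{lem:sizeRatio}) gives $\bar S_A=O(d\Delta)$, $\bar S_{\Bipm},\bar S_{\Biipm}=O(d\Delta^2/n)$ and $\bar S_{\Cpm}=O(\Delta^3/n)$. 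Then class A contributes $O(d\Delta)\cdot O((d+\Delta)^2/n^2+1/n)=o(1)$, while classes $\Biipm$ and $\Cpm$ contribute $O(d\Delta^2(d+\Delta)/n^2)$ and $O(\Delta^3(d+\Delta)/n^2)$, both $o(1)$. I expect the main obstacle to be the $1/d$ term in $\eta_{\Bipm}$: paired naively with $O(\imax)$ steps it would give $O(\Delta)$, which is useless, and it becomes harmless only because class $\Bipm$ switchings are rare, so that $\bar S_{\Bipm}\cdot(1/d)=O(d\Delta^2/n)\cdot(1/d)=O(\Delta^2/n)=o(1)$. Summing the four classes together with the per-step bounds of the second paragraph shows the total rejection probability is $o(1)$.
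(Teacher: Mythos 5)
Your proposal is correct and follows essentially the same route as the paper: per-step bounds multiplied by the $O(\imax)$ expected number of steps for the t-, f- and pre-b-rejections (your wholesale $O(\epsilon\,\imax)$ charge for steps using the rare booster types is a harmless coarsening of the paper's explicit $\rho_\tau(i)$ estimates), and for b-rejections the identical design identity reducing the expected count to $\sum_i q_\alpha(i)\sum_{G\in\strata_i}\bigl(b_\alpha(G)-\LB_\alpha(i)\bigr)$, controlled by the average-value Lemmas~\ref{lem:bb}, \ref{lem:bB2}, \ref{lem:LBB1} and~\ref{lem:LBC}; your $\bar S_\alpha$ bookkeeping reproduces the paper's class-by-class contributions exactly (the paper organizes the same computation via the ratio $\sigma_{i'+1}/\sigma_{i'}$ from Lemma~\ref{lem:solution} rather than Lemma~\ref{lem:sizeRatio}), including the key point that the $1/d$ error for class $\Bipm$ is absorbed by the rarity of those switchings. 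One expository slip worth fixing: you state $\eta_A=O((d+\Delta)/n)$, which together with $\bar S_A=O(d\Delta)$ would only yield $O(d\Delta(d+\Delta)/n)$, not $o(1)$ in general — but your final computation correctly invokes the sharper estimate $\eta_A=O((d+\Delta)^2/n^2+1/n)$ of Lemma~\ref{lem:bb}, which is precisely what makes the class~A term vanish, so the argument stands.
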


\begin{proof}
By Lemma~\ref{lem:solution}, the probability of a t-rejection in each step is at most $\epsilon$. Thus, the probability of a t-rejection in \algB\ is $O(\epsilon \imax)=o(1)$ by Lemma~\ref{lem:iterations}.

Next, consider f-rejections. Let $G_t$ be the graph obtained after $t$ 
switching steps. Given $G_t=G\in\strata_i$, the probability that an f-rejection occurs at step $t+1$ is
\[
\sum_{\tau\in\Gamma}\rho_{\tau}(i)\left(1-\frac{f_{\tau}(G)}{\UB_{\tau}(i)}\right)
\]
Summing over all $G\in\strata_i$ and summing over all steps $t$, 
the probability of an f-rejection in \algB\ is at most
\[
\sum_{t\ge 0} \, \sum_{0\le i\le \imax}\, \sum_{G\in\strata_i}\pr(G_t=G)\sum_{\tau\in\Gamma}\rho_{\tau}(i)\left(1-\frac{f_{\tau}(G)}{\UB_{\tau}(i)}\right).
\]
Next, we verify that for every $\tau\in\Gamma$ and for every $i\le\imax$, 
\[
\rho_{\tau}(i)\left(1-\frac{f_{\tau}(G)}{\UB_{\tau}(i)}\right)=O\left(\frac{d^2+\Delta^2}{n^2}+\frac1n+\frac{\Delta^2(d+\Delta)}{dn^2}\right).
\]
For $\tau\notin \Gamma\setminus\{\text{I,III}\pm\}$, we use the bounds in~(\ref{eq:IIbprob})--(\ref{eq:IIcprob}) for $\rho_{\tau}(i)$ and the trivial bound 1 for $1-f_{\tau}(G)/\UB_{\tau}(i)$. For $\tau=\text{III}\pm$, we use~(\ref{eq:IIIprob}) for $\rho_{\tau}(i)$ and Lemma~\ref{lem:LBC} for $1-f_{\tau}(G)/\UB_{\tau}(i)$. Lastly, for $\tau=I$, we use the trivial bound 1 for $\rho_{\text{I}}(i)$ and Lemma~\ref{lem:ff} for $1-f_{\text{I}}(G)/\UB_{\text{I}}(i)$. These yield the desired bound above.

We also have
\[
\sum_{t\ge 0} \, \sum_{0\le i\le \imax}\, \sum_{G\in\strata_i}\pr(G_t=G) = 
\sum_{0\le i\le \imax}\, \sum_{G\in\strata_i}\, \sum_{t\ge 0}\pr(G_t=G) =  
  \sum_{0\le i\le \imax} \sigma_i|\strata_i|,
\]
which is the number of switching steps in \algB. By Lemma~\ref{lem:iterations}, 
this is $O(\imax)$ in expectation and with high probability. 
Thus, the probability of an f-rejection is at most
\[
O\left(\frac{d^2+\Delta^2}{n^2}+\frac1n+\frac{\Delta^2(d+\Delta)}{dn^2}\right)\, O(\imax) =O\left(\frac{(d^2+\Delta^2)d\Delta}{n^2}+\frac{d\Delta}{n}+\frac{\Delta^3(d+\Delta)}{n^2}\right),
\]
and this is $o(1)$ when $d^2+\Delta^2=o(n)$.

Next, consider pre-b-rejections. Pre-b-rejections can happen when a switching
of type 
$\tau\in\{\IIbpm,\, \IIcpm\}$ is performed. Given $G_t=G\in\strata_i$, the probability that a pre-b-rejection occurs at step $t+1$ is
\[
\sum_{\tau\in\{\IIbpm,\, \IIcpm\}} \, \rho_{\tau}(i) \left(1-\frac{{\widehat \LB}_{\tau}(i)}{{\widehat b}_{\tau}(G, \boldsymbol{v})}\right).
\]
By Lemma~\ref{lem:IVa}, 
\[
\left(1-\frac{{\widehat \LB}_{\tau}(i)}{{\widehat b}_{\tau}(G, \boldsymbol{v})}\right) =O((d+\Delta)/n).
\]
Thus, by Lemma~\ref{lem:solution},  the probability of a pre-b-rejection occurring at step $t+1$, given $G_t=G$, is
\[
\sum_{\tau\in\{\IIbpm,\, \IIcpm\}}  O(\epsilon\, (d+\Delta)/n) = O\left(\frac{d^3+\Delta^3}{n^3}\right).
\]
Arguing as above, the probability of any pre-b-rejection during \algB\ is at most
\begin{align*}
\sum_{t\ge 0}\, \sum_{i\le \imax} \, \sum_{G\in\strata_i} \pr(G_t=G) \cdot  
 O\left(\frac{d^3+\Delta^3}{n^3}\right) 
  &=  O\left(\frac{d^3+\Delta^3}{n^3}\right)\, \imax\\
 &=  O\left(\frac{(d^3+\Delta^3)d\Delta}{n^3}\right), 
\end{align*}
and this is $o(1)$ when $d^2+\Delta^2 = o(n)$.

Finally, we consider b-rejections. Let $\Psi_{\tau,\alpha}(G,G')$ denote the set of switchings of 
Type~$\tau$ and Class~$\alpha$ which convert $G$ to $G'$.
An element of $\Psi_{\tau,\alpha}(G,G')$  is either an 8-tuple $\boldsymbol{v}$ such that
$(G,\boldsymbol{v})\mapsto G'$ is a switching of Type~$\tau$ and Class~$\alpha$,
if $(\tau,\alpha)\not\in \{\IIbpm,\Bipm),\, (\IIcpm,\Bipm)\}$  
or a pair $(\boldsymbol{v},\boldsymbol{y})$ which
determines a switching $(G,\boldsymbol{v},\boldsymbol{y})\mapsto G'$ of Type $\tau$ and Class $\Bipm$,
where $\tau\in \{ \IIbpm,\, \IIcpm\}$.
Let
\[
\Psi_{\tau,\alpha}(G')=\bigcup_{G} \Psi_{\tau,\alpha}(G,G')
\]
be the set of switchings into $G'$ which are of Type~$\tau$ and Class~$\alpha$.  
For any $S\in\Psi_{\tau,\alpha}(G,G')$ where $G\in \strata_i$ and $G'\in\strata_{i'}$ and 
$\Psi_{\tau,\alpha}(G,G')\neq \emptyset$, 
the probability that $S$ is performed and b-rejected, given $G_t=G$, is
\[
\frac{\rho_{\tau}(i)}{\UB_{\tau}(i)} \left(1-\frac{\LB_{\alpha}(i')}{b_{\alpha}(G')}\right).
\]
Then the probability that a b-rejection ever occurs in \algB\ is
\[
\sum_{\alpha}\, \sum_{\tau}\, \sum_{t\ge 1}\, \sum_{i'\le \imax} \,
\sum_{G'\in \strata_{i'}}\, \sum_{G} \,
  \pr(G_{t-1}=G)\, |\Psi_{\tau,\alpha}(G,G')|\, 
     \frac{\rho_{\tau}(i) }{\UB_{\tau}(i)} \left(1-\frac{\LB_{\alpha}(i')}{b_{\alpha}(G')}\right).
\]
Here $i$ is the index of the strata which contains $G$, which is determined by 
$\tau$, $\alpha$ and $i'$.  Hence, the above sum is
\begin{eqnarray*}
&&\sum_{\alpha}\, \sum_{i'\le \imax}\, \sum_{\tau}\, \frac{\rho_{\tau}(i)}{\UB_{\tau}(i)} \, \sum_{G'\in \strata_{i'}}  \frac{b_{\alpha}(G')-\LB_{\alpha}(i')}{b_{\alpha}(G')}\, \sum_{G}\, |\Psi_{\tau,\alpha}(G,G')| \, 
    \sum_{t\ge 1}\pr(G_{t-1}=G)\\
&&=\hspace{0.5cm}\sum_{\alpha}\, \sum_{i'\le \imax}\, \sum_{\tau}\,
\frac{\rho_{\tau}(i) \sigma_i}{\UB_{\tau}(i)} \, \sum_{G'\in \strata_{i'}} 
\,  \frac{b_{\alpha}(G')-\LB_{\alpha}(i')}{b_{\alpha}(G')} |\Psi_{\tau,\alpha}(G')|,
\end{eqnarray*}
since $\sum_{t\ge 1}\pr(G_{t-1}=G)=\sigma_{i}$. 
By the design of the algorithm, the expected number of times that $G'$ is reached via 
a Type~$\tau$, Class~$\alpha$ switching equals $q_{\alpha}(i')$ for all relevant
$\tau$, as displayed in~(\ref{qA})--(\ref{qC}).  Therefore, for every $\tau$,
\[
\frac{\rho_{\tau}(i) \sigma_i}{\UB_{\tau}(i)} = q_{\alpha}(i') = \frac{\rho_{\text{I}}(i'+1) \sigma_{i'+1}}{\UB_{\tau}(i'+1)}.
\]
Thus the above summation is
\[
\sum_{\alpha}\, \sum_{i'\le \imax}\, \frac{\rho_{\text{I}}(i'+1) \, 
  \sigma_{i'+1}}{\UB_{\text{I}}(i'+1)} \, \sum_{G'\in \strata_{i'}}  \,
  \frac{b_{\alpha}(G')-\LB_{\alpha}(i')}{b_{\alpha}(G')}\,
   \sum_{\tau}\, |\Psi_{\tau,\alpha}(G')|.
\]
Since $\sum_{\tau}\, |\Psi_{\tau,\alpha}(G')| = b_{\alpha}(G')$,
 by definition of $b_{\alpha}(G')$, the probability of a b-rejection in \algB\ is
\begin{eqnarray}
&&\sum_{\alpha}\, \sum_{i'\le \imax}\, \frac{\rho_{\text{I}}(i'+1) \sigma_{i'+1}}{\UB_{\text{I}}(i'+1)} \, \sum_{G'\in \strata_{i'}}  \Big(b_{\alpha}(G')-\LB_{\alpha}(i')\Big) \nonumber \\
&&= \sum_{\alpha}\, \sum_{i'\le \imax}\, \frac{\rho_{\text{I}}(i'+1) 
   \sigma_{i'+1}}{\UB_{\text{I}}(i'+1)} \, |\strata_{i'}| \,
    \Big(\ex b_{\alpha}(G')-\LB_{\alpha}(i')\Big)\nonumber\\
&&\le  \sum_{\alpha}\, \sum_{i'\le \imax}\,
   \frac{(i'+1)\sigma_{i'}}{d\Delta\cdot\UB_{\text{I}}(i'+1)} \, |\strata_{i'}|
\,  \Big(\ex b_{\alpha}(G')-\LB_{\alpha}(i')\Big) \nonumber\\
 &&\hspace{2cm}\mbox{(since $\rho_{\text{I}}(i'+1)\le 1$ and $\sigma_{i'+1}=O((i+1)\sigma_{i'}/d\Delta)$ by Lemma~\ref{lem:solution})}\nonumber\\
&&\hspace{0.5cm} \le  \frac{1}{d\Delta(dn)^3}\, \sum_{\alpha}\,
   \sum_{i'\le \imax}\, 
  \sigma_{i'}\, |\strata_{i'}|  \Big(\ex b_{\alpha}(G')-\LB_{\alpha}(i')\Big),  \label{eq:b-rej}
\end{eqnarray}
where $\ex b_{\alpha}(G')$ is the expectation of $b_{\alpha}(G')$ on a uniformly random $G'\in\strata_{i'}$.

\medskip

For $\alpha=\text{A}$, by Lemma~\ref{lem:bb}, 
\[
\ex b_{\text{A}}(G') - \LB_{\text{A}}(i') = O((d^2+\Delta^2)/n^2+1/n) \,
  \LB_{\text{A}}(i') = O((d^2+\Delta^2)/n^2+1/n)\Delta n d^2 (dn)^2.
\] 
Thus, the contribution to~(\ref{eq:b-rej}) from $\alpha=\text{A}$ is
\[
O\left(\Big(\frac{d^2+\Delta^2}{n^2}+\frac{1}{n}\Big)\frac{\Delta n d^2 (dn)^2}{d\Delta(dn)^3}\right)\sum_{i'\le \imax}\sigma_{i'}|\strata_{i'}|=o(1),
\]
as $\sum_{i'\le \imax}\sigma_{i'}|\strata_{i'}|=O(\imax)=O(d\Delta)$.

\medskip

For $\alpha\in\{\Bipm\}$, by Lemma~\ref{lem:LBB1}, 
\begin{align*}
\ex b_{\alpha}(G') - \LB_{\alpha}(i') &= O(1/d+(d+\Delta)/n)\, \LB_{\alpha}(i') 
=  O(1/d+(d+\Delta)/n) \imax \Delta d(dn)^2\\
&=O(1/d+(d+\Delta)/n) d^4 \Delta^2 n^2.
\end{align*} 
Thus, the contribution to~(\ref{eq:b-rej}) from $\alpha\in\{\Bipm\}$ is
\[
\frac{d^4 \Delta^2 n^2}{d\Delta(dn)^3}\cdot O(1/d+(d+\Delta)/n) \, \imax 
= O\left(\frac{d^5 \Delta^3 n^2}{d\Delta(dn)^3}\cdot 
  \left(\frac1d+\frac{d+\Delta}{n}\right)\right)=o(1).
\]
For $\alpha\in\{\Biipm\}$, by Lemma~\ref{lem:bB2}, 
\[
\ex b_{\alpha}(G') - \LB_{\alpha}(i') = O((d+\Delta)/n)\, \LB_{\alpha}(i') 
   =  O((d+\Delta)/n) \Delta^2d^4 n^2.
\]
Thus, the contribution to~(\ref{eq:b-rej}) from $\alpha\in\{\Biipm\}$ is
\[
\frac{\Delta^2d^4 n^2}{d\Delta(dn)^3}\cdot O((d+\Delta)/n)  \imax = O\left(\frac{d^5 \Delta^3 n^2}{d\Delta(dn)^3}\cdot \left(\frac{d+\Delta}{n} \right)\right)=o(1).
\]
For $\alpha=\Cpm$, by Lemma~\ref{lem:LBC},
\[
\ex b_{\alpha}(G') - \LB_{\alpha}(i') = O((d+\Delta)/n)\,
    \LB_{\alpha}(i') =  O((d+\Delta)/n) \Delta^3d^3 n^2.
\]
Thus, the contribution to~(\ref{eq:b-rej}) from $\alpha\in\{ \Cpm\}$ is
\[
\frac{\Delta^3d^3 n^2}{d\Delta(dn)^3}\cdot O((d+\Delta)/n)\,  \imax = O\left(\frac{d^4 \Delta^4 n^2}{d\Delta(dn)^3}\cdot \left(\frac{d+\Delta}{n} \right)\right)=o(1).
\]
This completes the proof that the probability of any b-rejection in \algB\ is $o(1)$.
\end{proof}

\section{Deferred analysis of time complexity and distance from uniform}\label{sec:runtime}

In this section we present the deferred analysis of the time complexity of
algorithms \algA\ and \algB, and the proof that
the output of \algC\ is within $o(1)$ of uniform.
As usual, asymptotics are as $n\to\infty$
where $d=d(n)$ and $\Delta=\Delta(n)$ satisfy the assumptions of the relevant theorem.

\bigskip

\Arun*
\begin{proof}
In each iteration, the switching of a bounded number
of edges can be done in $O(1)$ time. The time-consuming part is to compute $b(G)$ to 
determine the probability of a b-rejection. 

First consider the initial graph $G$ produced by REG.
To compute $b(G)$ for the initial graph, we use brute force to search for all possible choices of 
$v_5,v_0,v_1,v_2$. This can be done in time $O(d^2\Delta n)$. Denote the number 
of choices by $X$. Multiplying $X$ by $dn-2i$ gives the first estimate for
$b(G)$. Next we accurately compute $b(G)$ using inclusion-exclusion. The choices of $v_3$ and $v_4$ must satisfy a set $U$ of constraints. We can count the number
of choices which satisfy all constraints using inclusion-exclusion. 
The inclusion-exclusion argument involves a bounded number of terms counting 
choices where a subset 
$W\subseteq U$ of constraints are violated. We will show that each such term 
can be computed in time $O((d+\Delta)^3n)$ with the aid of a
proper data structure.   

Let $\widetilde{G}$ be the supergraph of $G$ consisting of $G$ together
with all red edges in $K_{n}\setminus G$. We use $\widetilde{\text{red}}$ for
the colour of edges in $\widetilde{G}$ which are not in $G$.
When computing $X$ using brute force search, we can record the 
number of 3-paths in $\widetilde{G}$ between any two vertices of any type 
(for example, red-black-red, or black-red-black, or 
black-$\widetilde{\text{red}}$-black);
we can also record the number of 
3-paths and 2-paths in $\widetilde{G}$ of any given type starting from any given 
vertex.  The time complexity for computing all these numbers is $O((d+\Delta)^3 n)$ 
since the maximum degree in $\widetilde{G}$ is bounded above by $d+\Delta$. 
The number of other local structures of at most 4 vertices can be computed 
and recorded within this time complexity bound,  
that is, triangles, 4-cycles, etc. We can also record lists of pairs of vertices 
which are joined by a 3-path, or 2-path, or an edge, within the same time complexity. 

Given $W$, let $b_W$ be the choices of the sequence of the six vertices that violate 
constraints in $W$ (here constraints in $U\setminus W$ may or may not be violated).
Since the structure counted by $b_W$ uses up to six vertices, it is easy to see that $b_W$ can be computed using the numbers we have recorded.  For instance, if $W=\emptyset$ then $b_W=X(dn-2i)$. If $W=\{\mbox{$v_4v_5$ is a black non-edge}\}$, then $b_W=b_{W,1}+b_{W,2}+b_{W,3}$, where $b_{W,1}$ counts those choices where $W$ is violated by 
taking $v_4v_5$ as a red edge; $b_{W,2}$ counts those choices where $v_4v_5$ is a black edge, and $b_{W,3}$ counts those choices where $v_4v_5$ is a red non-edge. 
In each case, we can run through $n$ choices for $v_5$, and compute $b_{W,i}$ using the number of 3-paths and 2-paths starting from $v_5$ that have been recorded. The time complexity is then $O(n)$. For every other $W$ it is easy to check that a similar scheme works. Thus, it takes $O((d+\Delta)^3 n)$ time to compute $b(G)$ for the 
initial graph $G$.

Next suppose that $G$ is produced by a switching step, during the run of \algA.
We do not need to recompute $b(G)$ from scratch: instead, we can update the data recorded in our 
data structure very efficiently, because only 3 new edges are added, and 3 edges are 
deleted. Since the data we store are counts of structures involving only up to 
4 vertices, changing each edge will alter at most $O((d+\Delta)^2)$ entries. 
For each entry change, we can update $b(G)$ by updating the corresponding
$b_W$ terms in the 
inclusion-exclusion formula. Thus the time complexity for computing $b(G)$ is 
$O((d+\Delta)^2)$ after each subsequent switching step and there are 
$O(d\Delta)$ switching steps in expectation, since \algA\ restarts $O(1)$ times
in expectation. 
Thus the total time complexity for \algA\ is 
\[ O((d+\Delta)^3 n+d\Delta(d+\Delta)^2)=O((d+\Delta)^3 n)\] 
in expectation, completing the proof.
\end{proof}

\bigskip

For convenience, we restate Theorem~\ref{thm:B} below.

\thmB*
\begin{proof}
In Lemma~\ref{lem:uniform} we have shown that \algB\ is a uniform sampler.
It only remains to prove the efficiency. By Corollary~\ref{cor3:imax}, \algB\ restarts only $O(1)$ times in expectation and $O(\log n)$ times a.a.s.\ before finding a $d$-regular graph containing at most $\imax$ red edges. The total time complexity for finding such a graph is $O(d^3n)$ in expectation, and $O(d^3n\log n)$ a.a.s..
By Lemma~\ref{lem:rejections}, the probability that \algB\ restarts afterwards is $o(1)$. Thus, we only need to bound the remaining runtime of \algB\ assuming no rejections. By Lemma~\ref{lem:iterations}, after finding a $d$-regular graph with at most $\imax$ red edges, \algB\ will perform $O(\imax)$ switching steps in expectation 
and with high probability. In each switching step, the most time-consuming part is to compute $f_{\tau}(G)$, ${\widehat b}_{\tau}(G,\boldsymbol{v})$, and $b_{\alpha}(G)$ for $\tau\in\{\text{I}, \IIapm, \IIbpm, \IIcpm, \IIIpm\}$ and $\alpha\in\{\text{A},\Bipm,\Biipm,\Cpm\}$.

We first bound the a.a.s.\ time complexity. Note that $f_{\tau}(G)$ and 
${\widehat b}_{\tau}(G,\boldsymbol{v})$ will only need to be evaluated once a type 
$\tau$ switching is performed. By~(\ref{cond2}) and~(\ref{epsilon}), the 
probability that a type $\tau$ switching is ever performed in \algB\ for any 
$\tau\notin\{\text{I},\IIIpm\}$ is $O(\epsilon \imax)=o(1)$. It follows immediately that a.a.s., only $f_{\tau}(G)$ for $\tau\in\{\text{I},\IIIpm\}$ and 
$b_{\alpha}(G)$, $\alpha\in\{\text{A},\Bipm,\Biipm,\Cpm\}$, will ever be computed during the implementation of \algB.

First consider $f_{\tau}(G)$ for $\tau=\text{I}$. We want to count choices of 
$(v_0,\ldots, v_7)$ such that $v_0v_1$ is a red edge in $G$, $v_2v_3$ and $v_6v_7$ 
are edges (red or black) in $G$, and $v_4v_5$ is a black edge in $G$, satisfying a set $U$ of constraints (that is, no vertex collision except for $v_2=v_7$ and certain edges are forbidden in $G$ and must be with certain colour in $K_n$). As before, using inclusion-exclusion, we can express this number by $b_W$, $W\subseteq U$, where $b_W$ is the number of choices where the conditions in $W$ are violated. 
We use similar data structures as in Theorem~\ref{thm:A}, but we record counts of 
structures containing up to 5 vertices. 
Thus the time complexity for constructing the data structures is 
$O((d+\Delta)^4n)$ in the first iteration. It is easy to see, as in the proof of
Theorem~\ref{thm:A}, that all  terms $b_W$ in the inclusion-exclusion
can be computed using the recorded data. 
Moreover, it takes $O((d+\Delta)^3)$ time to update the data structure after each 
subsequent switching step. Thus, the total time complexity for computing 
$f_{\text{I}}(G)$ throughout \algB\ is 
$O((d+\Delta)^4n+d\Delta(d+\Delta)^3)=O((d+\Delta)^4n)$. The same
time complexity bound holds for computing $f_{\text{III}+}(G)$ 
and $f_{\text{III}-}(G)$ throughout \algB,
as the same number of vertices are involved in a Type~\textIIIpm\ switching
as in a Type~I switching.

Next we consider $b_{\alpha}(G)$. Similar arguments as for $f_{\text{I}}(G)$ 
show that the time complexity of computing
$b_{\alpha}(G)$ for every $\alpha$ throughout \algB\ is at most
$O((d+\Delta)^4n)$. 
The Gao--Wormald algorithm~\cite{GWSIAM}, used to produce
the initial $d$-regular graph,
has time complexity $O(d^3 n)$ in expectation. Hence a.a.s.\ the time
complexity to produce the initial $d$-regular graph is $O(d^3 n\log n)$.
Therefore, the a.a.s.\ time complexity bound for \algB\ is
$O(d^3n\log n+(d+\Delta)^4n)$. 


Now we consider the time complexity in expectation. To do this, we obtain an 
upper bound for the time complexity of computing $f_{\tau}(G)$ and 
${\widehat b}_{\tau}(G,\boldsymbol{v})$, $\tau\notin\{\text{I},\IIIpm\}$, and then 
multiply by the probability that the switching type $\tau$ is chosen in a single 
step, and finally multiply by $O(\imax)=O(d\Delta)$, which is an upper bound for the 
expected
number of switching steps performed by \algB. These switchings are performed rarely,
so we do not attempt to update the data after every switching step. 
Instead we simply reconstruct the data structure whenever it is needed.  
Obviously for $d$ and $\Delta$ in different ranges, different counting schemes can be 
used to optimise the runtime: we have not attempted this. 
Here we simply use the scheme which naturally extends that given
in Theorem~\ref{thm:A}.

For each $\tau$, we use data structures to record counts of connected small structures up to $j$ vertices, where 
\[  j = 
\begin{cases}
5 & \text{ for $\tau\in \{\IIapm\}$,}\\
9 & \text{ for $\tau\in \{\IIbpm\}$,}\\ 
11 & \text{ for $\tau\in \{\IIcpm\}$. }
\end{cases}
\]
This leads to the following bounds on the complexity of computing $f_{\tau}(G)$
for a particular $G$,
in these cases: 
\begin{eqnarray*}
&&
\IIapm:\ O((d+\Delta)^4n);
\qquad 
\IIbpm:\ O((d+\Delta)^8n); 
\qquad \IIcpm:\ O((d+\Delta)^{10}n).
\end{eqnarray*}
Now for $\tau\neq \text{I}$, the Type $\tau$ switchings are only 
implemented occasionally. 
Let $\rho_{\tau}^*=\max_{0\le i\le \imax} \rho_{\tau}(i)$. Multiplying the 
above bounds by $O(\imax) \rho_{\tau}^*$, using
(\ref{eq:IIIprob})--(\ref{eq:IIcprob}), yields the following overall bounds on the 
expected time complexity for computing $f_{\tau}(G)$ during \algB:
\begin{eqnarray*}
&&
\IIapm:\quad O((d+\Delta)^4 d\Delta^3/n);
\qquad 
\IIbpm:\quad \ O((d+\Delta)^8 d^2\Delta^2/n ); \\
&& \IIcpm:\quad O((d+\Delta)^{10} d^2\Delta^3 /n^2);\qquad
\IIIpm:\quad  O((d+\Delta)^{4}\Delta^3 ).
\end{eqnarray*}

Combining the contribution from every type $\tau$, the expected time complexity for computing $f_{\tau}(G)$ 
throughout  \algB\ is bounded above by
\begin{equation}
O\Big((d+\Delta)^4(n+\Delta^3)+(d+\Delta)^8d^2\Delta^2/n +(d+\Delta)^{10} d^2\Delta^3 /n^2\Big). \label{final}
\end{equation}

Finally, we consider computation of 
${\widehat b}_{\tau}(G,\boldsymbol{v})$ for $\tau\in\{\IIbpm,\, \IIcpm\}$,
for a given 8-tuple~$\boldsymbol{v}$. 
(Recall that these are the only switching types which have pre-b-rejections.)
Due to~(\ref{final}) we only need rough bounds for 
the time complexity of computing ${\widehat b}_{\tau}(G,\boldsymbol{v})$. 
For Type $\IIbpm$, 
it is sufficient to use a data structure to record counts of connected structures 
involving up to 5 vertices, or up to 9 vertices, one of which belongs
to $\boldsymbol{v}$.
The runtime to construct data structures of the first type is $O((d+\Delta)^4n)$; 
and $O((d+\Delta)^8)$ for the second type. For Type $\IIcpm$,
similar arguments
give an upper bound of 
\[ O\big((d+\Delta)^6n+(d+\Delta)^{12}\big)\]
 on the complexity
of constructing the data structure. Multiplying these bounds 
by $O(\imax) \rho_{\tau}^*$, using (\ref{eq:IIbprob}) and (\ref{eq:IIcprob}),
yields the following upper bound on the
complexity of computing ${\widehat b}_{\tau}(G,\boldsymbol{v})$ during the 
implementation of \algB:
\begin{align*}
\IIbpm: & \qquad O\left(\frac{d^2\Delta^2(d+\Delta)^4}{n} +\frac{d^2\Delta^2(d+\Delta)^8}{n^2} \right); \\
\IIcpm: &  \qquad O\left(\frac{d^2\Delta^3(d+\Delta)^6}{n^2} 
  +\frac{d^2\Delta^3(d+\Delta)^{12}}{n^3} \right).
\end{align*}
Since $d^2+\Delta^2 = o(n)$, the above terms are dominated by~(\ref{final}). 
Thus,
combining everything together, the expected time complexity of \algB\ is 
bounded by~(\ref{final}).
\end{proof}

\bigskip

Finally, we restate and prove Lemma~\ref{lem:algC-distribution}. 

\Cdist*
\begin{proof}
Recall that \algB\ calls REG to generate a uniformly random $d$-regular graph on $[n]$, whereas \algC\ calls REG* which generates an approximately uniformly random $d$-regular graph. It was proved in~\cite[Section 10]{GWSIAM} that REG and REG* can be coupled so that with probability $1-o(1)$ they have the same output. Assume REG and REG* both output $G$ which contains at most $\imax$ red edges.  Consider continuing the run of \algB\ but
restricting the choice of type to $\tau\in\{\text{I},\IIIpm\}$ and
ignoring all t-rejections, f-rejections, b-rejections and pre-b-rejections. 
Recall that Type \textIIIpm\ switchings do not switch any edges. If we ignore the 
f-rejections in the implementation of Type~\textIIIpm\ switchings then the switching 
has no effect and we may simply skip that step. Hence this
modification of \algB\ behaves identically to \algC, that is,
by repeatedly performing valid Type~I switchings. 

Therefore, we can couple the implementation of \algB\ and \algC\ such that \algB\ and 
\algC\ output the same graph $G$ as long as no rejections occur in \algB\ and no 
types of switchings other than $\text{I}, \IIIpm$ are chosen in \algB.  
By Lemma~\ref{lem:rejections}, the probability of performing any rejection in 
\algB\ is $o(1)$. By~(\ref{cond2}) and~(\ref{epsilon}), the probability of 
performing any switchings in \algB\ of type other than $\text{I}, \IIIpm$ is 
$O(\epsilon)=o(1)$.  Hence, the total variation distance between the output of 
\algC, and that of \algB, which is uniform, is $o(1)$. 
Here $o(1)$ also accounts for the probability that the coupled REG and REG* have distinct outputs. 
\end{proof}

\subsection*{Acknowledgements} We thank Brendan McKay for his idea of permitting $v_2=v_5$ in a 3-edge-switching and the similar treatment in a 4-edge-switching. This crucial idea significantly simplified our algorithms and analysis.
We also thank the referee for their helpful comments.

\end{document}